\numberwithin{equation}{section}
\newtheorem{thm}{Theorem}[section]
\newtheorem{lem}[thm]{Lemma}
\newtheorem{cor}[thm]{Corollary}
\newtheorem{rem}[thm]{Remark}
\newtheorem{ass}[thm]{Assumption}
\newcommand{\mcc}{\mathcal{C}}
\newcommand{\mcf}{\mathcal{F}}
\newcommand{\mci}{\mathcal{I}}
\newcommand{\mcl}{\mathcal{L}}
\newcommand{\mbbh}{\mathbb{H}}
\newcommand{\mbbi}{\mathbb{I}}
\newcommand{\mbbn}{\mathbb{N}}
\newcommand{\mbbr}{\mathbb{R}}
\newcommand{\mbby}{\mathbb{Y}}
\newcommand{\al}{\alpha}
\newcommand{\del}{\delta} 
\newcommand{\vp}{\varphi}
\newcommand{\ve}{\varepsilon}
\newcommand{\sig}{\sigma}
\newcommand{\ep}{\epsilon}
\newcommand{\D}{\Delta}
\newcommand{\Sig}{\Sigma}
\newcommand{\lam}{\lambda}
\newcommand{\Ups}{\Upsilon}
\newcommand{\gam}{\gamma}
\newcommand{\Gam}{\Gamma}
\newcommand{\p}{\partial}
\newcommand{\cil}{\xrightarrow{\mcl}} % <- Convergence in law
\newcommand{\scl}{\xrightarrow{\mcl_{s}}} % <- Stable convergence in law
\newcommand{\cip}{\xrightarrow{p}} % <- Convergence in probability
\newcommand{\argmax}{\mathop{\rm argmax}}
\newcommand{\trace}{\mathop{\rm trace}} 
\def\ds#1{\displaystyle{#1}} % boldmath-->command: \Vec{...}
\def\nn{\nonumber}
\def\cadlag{c\`adl\`ag}
\def\wp{Wiener process}
\def\cpp{compound-Poisson process}
\def\lp{L\'{e}vy process}
\def\lm{L\'{e}vy measure}
\def\sumj{\sum_{j=1}^{n}}
\def\pr{P}
\def\E{E}
\def\hmrev#1{\textcolor{black}{#1}} % HM revise
\def\serev#1{\textcolor{black}{#1}} % SE revise
\def\frev#1{\textcolor{black}{#1}} % final revision
\def\tz{\theta_{0}}
\def\tes{\hat{\theta}_{n}}
\newcommand{\ly}{Y^\star} %% latent Y
\newcommand{\oy}{Y} %% observed Y
\newcommand{\lx}{X^\star} %% latent X
\newcommand{\ox}{X} %% observed X
\newcommand{\nY}{\overline{\mathsf{y}}} %% h^{-1/2} \D_j Y
\newcommand{\cx}{\check{X}} %% continuous X
\title[Robustified Gaussian quasi-likelihood inference for volatility]
{Robustified Gaussian quasi-likelihood inference for volatility}
\author[S. Eguchi]{Shoichi Eguchi}
\address{Faculty of Information Science and Technology, Osaka Institute of Technology, 1-79-1 Kitayama, Hirakata City, Osaka, 573-0196, Japan.}
\email{shoichi.eguchi@oit.ac.jp}
\author[H. Masuda]{Hiroki Masuda}
\address{Graduate School of Mathematical Sciences, University of Tokyo, 3-8-1 Komaba Meguro-ku Tokyo 153-8914, Japan.}
\email{hmasuda@ms.u-tokyo.ac.jp}
\date{\today}
\keywords{
Density-power divergence; Gaussian quasi-likelihood inference; volatility regression model.
%AIC; BIC; ergodic L\'{e}vy driven SDE; stepwise Gaussian quasi-likelihood estimation
}
\begin{document}
\setlength{\baselineskip}{4.5mm}

\begin{abstract}
We consider statistical inference for a class of continuous semimartingale regression models based on high-frequency observations subject to contamination by finite-activity jumps and spike noise. By employing density-power weighting and H\"{o}lder-inequality-based normalization, we propose easy-to-implement, robustified versions of the conventional Gaussian quasi-maximum-likelihood estimator that require only a single tuning parameter. We prove their asymptotic mixed normality at the standard rate of $\sqrt{n}$. It is theoretically shown that these estimators are simultaneously robust against contamination in both the covariate and response processes. Additionally, under suitable conditions on the selection of the tuning parameter, the proposed estimators achieve the same asymptotic distribution as the conventional estimator in the contamination-free case. Illustrative simulation results highlight the estimators' insensitivity to the choice of the tuning parameter.
\end{abstract}

\maketitle

%%%%%
%%%%%
\section{Introduction}

Suppose that we want to estimate the parametric diffusion coefficient of the $d$-dimensional continuous It\^{o} semimartingale $(X,Y)$ where
\begin{align}
Y_t &= Y_0 + \int_0^t \mu_s ds + \int_0^t \sig(X_{s},\theta)dw_s,
\label{hm:csm.reg} \\
X_t &= X_0 + \int_0^t \mu'_{s} ds + \int_0^t \sig'_{s}dw'_s
\label{hm:csm.reg.X}
\end{align}
based on a discrete-time sample $\{(X_{t_j},Y_{t_j})\}_{j=0}^{n}$ with $t_j=t_j^n:=jT/n$ for a fixed $T>0$, when finite-activity jumps and spike noises contaminate the process, 
\hmrev{where $w$ and $w'=(w,w^\dagger)$ are standard 
% ($(\mcf_t)$-)
{\wp es} in $\mbbr^r$ and $\mbbr^{r'}$ respectively, with $w^\dagger$ denoting a standard {\wp} in $\mbbr^{r^\dagger}$ independent of $w$ ($r'=r+r^\dagger$);} 
we will model spike noises, which appear in several application fields, such as echo-planar imaging-based diffusion tensor imaging \cite{ChaStoGra09}, using \eqref{hm:Y.spike} and \eqref{hm:X.spike} below.
In the absence of discontinuous components, the conventional Gaussian quasi-(log-)likelihood function based on the Euler scheme (GQLF; see \eqref{hm:GQLF} below) is efficient within the non-ergodic framework, treating the drift function $\mu=(\mu_t)$ as a nuisance parameter. 
We refer to \cite{GenJac93}, \cite{Gob01} and \cite{UchYos13} for related details for the efficient results in the case of clean continuous processes.
However, it is well-known that the GQLF is sensitive to contamination from discontinuous variations.

The most popular estimation strategy for removing jumps is the threshold estimation. Related previous studies about parametric inference include the following.
\begin{itemize}
\item \textit{Local-threshold estimation \cite{ShiYos06}, \cite{Shi08}, and \cite{OgiYos11} for ergodic diffusion with finite-activity jumps.}
They classified the jumps by looking at only the increment sizes of $Y$ and entirely or partly involving a jump-detection filter of the form
\begin{equation}
|Y_{t_j}-Y_{t_{j-1}}| > (t_j-t_{j-1})^{\rho} C
\label{hm:local.filter}
\end{equation}
for some user-input constants $C,\rho>0$. We also refer to \cite{AmoGlo21} for a contrast function based on a smoothed version of the indicator function of the event \eqref{hm:local.filter}.
\item \textit{Global-threshold estimation for non-ergodic regression with jumps.}
The previous work \cite{InaYos21} proposed the global jump detection filter, which removes a portion of large increments from the Gaussian quasi-likelihood in an appropriate manner. 
They proved the asymptotic mixed normality and the uniform tail-probability estimate for the associated statistical random fields. 
% It should be noted that their procedure introduces fine-tuning targets. Depending on the context, these may include a preliminary estimator of the spot volatility, the portion sequence $\al^{(k)}_n=B^{(k)}\lfloor n^{\del_1^{(k)}}\rfloor /n$ to discard large increments, and the indicator function $I(|\D_j Y^{(k)}| < C_\ast^{(k)} n^{-1/4})$, which depends on user-specified parameters $B^{(k)}>0$, $\del_1^{(k)} \in(0,1/2)$, and $C_\ast^{(k)}>0$.
Their estimation procedure requires several user-specified elements, including a preliminary estimator for the spot volatility and a threshold sequence to filter out large increments.
\end{itemize}
The threshold estimators should perform efficiently if the threshold is appropriately chosen, although choosing the threshold is a delicate problem. Indeed, calibration of the best filter is a non-asymptotic problem with considerable degrees of freedom; the meaning of ``best'' itself encompasses several aspects, including unbiasedness and/or minimum variance in a finite sample, computational efficiency (stability), and so on.
Relatedly, we also refer to \cite{MasUeh21} for estimation of ergodic diffusions with compound-Poisson jumps based on testing the presence of jumps through the self-normalized residuals; the strategy does not involve any thresholding, but requires repeated computations after removing the largest increments until the rejection of the presence of jumps is terminated.
While the estimation procedure of \cite{MasUeh21} is easy to implement and hence practical, the theoretical consideration was made only for a limited class of coefficients.

The primary objective of this paper is to present robust estimation methods that do not involve a precise fine-tuning filter of the form \eqref{hm:local.filter}. Specifically, we propose modifications of the GQLF through the density-power divergence and the H\"{o}lder inequality, \hmrev{and demonstrate their robustness properties both theoretically and numerically.} 
% from an asymptotic viewpoint.
To the best of our knowledge, the only previous studies on the robust divergence-based inference for SDE models are \cite{LeeSon13}, \cite{Son17}, and \cite{Son20}, all of which are concerned with ergodic Markovian diffusions without theoretical consideration in the presence of the contaminations.
On the other hand, we will theoretically show that the simple density-power tapering and H\"{o}lder-based normalization automatically remove discontinuous contaminations under the high-frequency sampling scheme, thus providing us with a handy practical alternative to threshold estimation. 

Unlike the threshold estimation mentioned above, our asymptotic results ensure asymptotic mixed normality for each fixed tuning parameter, which we will denote by $\lam>0$. We also prove that, with a suitable control of the tuning parameter, we can derive the ``clean'' asymptotic mixed normality in which the asymptotic random covariance matrix is formally the same as in the well-known form (see \cite{GenJac93} and \cite{UchYos13}) except that its randomness implicitly depends on possible jumps, while not on the spikes. See Theorem \ref{hm:thm_AMN} below.
% As in the case of \cite{MasUeh21} mentioned above, our estimation procedure has the merit of bypassing the sensitive threshold selection in the jump-detection filter like \eqref{hm:local.filter}.

\medskip

The paper is organized as follows. 
In Section \ref{hm:sec_model.intro}, we introduce the model setup and mention some more background. Then, in Section \ref{hm:sec_robust.GQLFs}, we describe the two robustified versions of the conventional Gaussian quasi-likelihood function in terms of the density-power tapering and the normalized Gaussian quasi-score via the H\"{o}lder inequality. 
Section \ref{hm:sec_main.results} presents the main results of this paper, namely, the asymptotic mixed normality of the two proposed estimators. Its proof is given in Appendix \ref{hm:sec_main.results.proof} after presenting a series of auxiliary asymptotic results in Appendix \ref{hm:sec_aux.asymp}.
Illustrative simulation experiments are given in Section \ref{sec_simulations}.
Some additional numerical results are given in Appendix \ref{sec:add.sim}.

% \medskip

%%%
\subsection*{Basic notation and convention}

Throughout this paper, we will denote by $C$ and $C'$ positive universal constants, which do not depend on $n$ and the user-input tuning parameter $\lambda=\lam_n>0$ 
introduced later, 
% in \eqref{hm:lam.condition-1} 
% Section \ref{hm:sec_aux.asymp} 
but possibly depend on its supremum $\overline{\lambda}$ (see \eqref{hm:lam.condition-1} and \eqref{hm:lam.condition-2} for the required conditions on $(\lam_n)$); the constants may vary from line to line.
% and we write $C_\al$ for emphasizing the dependence on a variable $\al$.
% The set of nonnegative integers is denoted by $\mbbzp$.
For positive real sequences $(a_n)$ and $(b_n)$, $a_n\lesssim b_n$ %($b_n \gtrsim a_n$) 
means that $\limsup_n (a_n/b_n)<\infty$; 
% and $a_n\asymp b_n$ means that both $a_n\lesssim b_n$ and $b_n\lesssim a_n$ hold; 
the notation will also be used for random variables when they hold a.s.
For any matrix $A$, $A^{\otimes 2}:=AA^\top$ with $\top$ denoting the transposition.
% When $A$ is a square matrix, $\lam_{\min}(A)$ (resp. $\lam_{\max}(A)$) denotes the minimum (resp. maximum) eigenvalue of $A$.
When $A$ is a square matrix, $\lam_{\min}(A)$ denotes the minimum eigenvalue of $A$.
\serev{We denote by $|x|$ the Euclidean norm of $x\in\mbbr^{d}$.}
For $k\ge 1$, we denote by $\phi_k(\cdot;\mu,\Sig)$ the $k$-dimensional normal $N_k(\mu,\Sig)$-density and $\phi_k(\cdot):=\phi_k(\cdot;0,I_k)$ with $I_k$ denoting the $k$-dimensional identity matrix; we will often simply write $\phi(\cdot)=\phi_d(\cdot)$ for the dimension $d$ of $Y$.
We write $\p_a^k$ for the $k$th (partial) derivative operator with respect to the variable $a$.
Both $I(A)$ and $I_A$ denote the indicator function of the event $A$.
For a vector $u$, we write $M[u]=\sum_k M_k u_k$ (the inner product) for a linear form $M=\{M_{k}\}$ and also $M[u^{\otimes 2}]=\sum_{k,l} M_{kl}u_k u_l$ (the quadratic form) for a bilinear form $M=\{M_{kl}\}$; depending on the context, $M[u]$ and $M[u^{\otimes 2}]$ themselves may be a (multi)linear form.
%For a $K$-multilinear form $M=(M_{i_1 i_2 \dots i_K})_{i_1,i_2,\dots,i_K}\subset \mbbr$ and vectors $u_1=(u_{1,i_1}),\dots,u_K=(u_{K,i_K})$, we write
%\begin{equation}
%M[u_1,\dots,u_K]:=\sum_{i_1}\dots\sum_{i_K}M_{i_1 i_2 \dots i_K}u_{i,i_1}\dots u_{K,i_K}.
%\nonumber
%\end{equation}
%In particular, $M[u]=\la M,u\ra$ denotes the inner product and $M[u,u]=M[u^{\otimes 2}]$ does the quadratic form $u^\top Mu$;
%also, $M[u_1,\dots,u_{K'}]$ for $K'<K$ defines a multilinear form of order $K-K'$.
Finally, all non-random and stochastic order symbols are used for $n\to\infty$ unless otherwise mentioned.

%%%%%
%%%%%
\section{Volatility regression model with contaminations}
\label{hm:sec_model.intro}

\subsection{Setup and assumptions}
\label{hm:sec_setup}

Suppose that we are given a filtered probability space $(\Omega,\mcf,(\mcf_t)_{t\in[0,T]},\pr)$ satisfying the usual hypothesis, where $T>0$ is a fixed constant. We consider the {\cadlag} ($(\mcf_t)$-)adapted processes $\ly$ and $\lx$ in $\mbbr^d$ and $\mbbr^{d'}$, respectively, described by
\begin{align}
\ly_t &= \ly_0 + \int_0^t \mu_{s} ds + \int_0^t \sig(\lx_{s-},\theta)dw_s + J_t,
\label{hm:model.Y.star}\\
\lx_t &= \lx_0 + \int_0^t \mu'_{s} ds + \int_0^t \sig'_{s-}dw'_s + J'_t,
\label{hm:model.X.star}
\end{align}
where $\zeta_{s-}:=\lim_{u\uparrow s}\zeta_{u}$ for a process $\zeta$ and the ingredients are given as follows:
\begin{itemize}
\item The diffusion coefficient $\sig:\,\mbbr^{d'} \times \overline{\Theta} \to \mbbr^d\otimes \mbbr^r$ is known except for the finite-dimensional parameter 
% $\theta=(\theta_1,\dots,\theta_p)\in\Theta\subset\mbbr^p$, 
\begin{equation}
    \theta=(\theta_1,\dots,\theta_p)\in\Theta\subset\mbbr^p,
\end{equation}
where the parameter space $\Theta$ is a bounded convex domain, so that its closure $\overline{\Theta}$ is compact;
\item $\mu$, $\mu'$, and $\sig'$ are {\cadlag} adapted processes in $\mbbr^d$, $\mbbr^{d'}$, and $\mbbr^{d'}\otimes\mbbr^{r'}$, respectively;
% \item 
% $w$ and $w'=(w,w^\dagger)$ are standard ($(\mcf_t)$-){\wp es} in $\mbbr^r$ and $\mbbr^{r'}$ respectively, where $w^\dagger$ is a standard {\wp} in $\mbbr^{r^\dagger}$ independent of $w$ ($r'=r+r^\dagger$);
\item 
$J$ and $J'$ are {\cadlag} adapted finite-activity pure-jump processes in $\mbbr^d$ and $\mbbr^{d'}$, respectively, that is, both $J$ and $J'$ vary only by a.s. finitely many jumps on $[0,T]$.
\end{itemize}
% All the stochastic processes are assumed to be $(\mcf_t)$-adapted.
% More detailed conditions will be given later.
The process $(\lx,\ly)$ is the underlying dynamics of our model. 
Let $t_j=t^n_j:=jh$ with $h=h_n:=T/n$; here and in what follows, we will omit the dependence on $n$ from the notation for brevity. For each $n\in\mbbn$, we define the process $\oy=\oy^n$ and $\ox=\ox^n$ observed at at high frequency as follows:
\begin{align}
\oy_t &= \ly_t + \sumj \Ups_j I(t=t_j),
%\oy_t &= \ly_t + \sumj \Ups_j \mathbf{1}_j(t),
\label{hm:Y.spike}\\
\ox_t &= \lx_t + \sumj \Ups'_j I(t=t_j),
%\ox_t &= \lx_t + \sumj \Ups'_j \mathbf{1}_{j}(t),
\label{hm:X.spike}
\end{align}
where $\Ups_{j}=\Ups_{n,j}\in\mbbr^d$ and $\Ups'_{j}=\Ups'_{n,j}\in\mbbr^{d'}$, all being $\mcf_{t_j}$-measurable triangular array of random variables. 
The components of the covariate process $\lx$ may contain those of $\ly$. Hence, our scope includes, in particular, situations where $Y^\star$ is a stochastic differential equation model and the covariate process $X^\star$ is also contaminated.

Our objective is to estimate the true value $\tz\in\Theta$, assumed to exist, based on a discrete-time sample $\{(\ox_{t_{j}}, \oy_{t_j})\}_{j=0}^{n}$.
Throughout this paper, we regard $\{(\Ups_j,\Ups'_j)\}_{n,j}$ and $(J,J')$ as contaminating elements, implying that the ideal situation is the case where $|\Ups_j|\vee|\Ups'_j|\equiv 0$ and $J,J'\equiv 0$ so that the model $(X,Y)$ equals the continuous-semimartingale regression model \eqref{hm:csm.reg} and \eqref{hm:csm.reg.X}.
\hmrev{
% As one of the reviewer indicated,  we may think that the spikes $\{(\Ups_j,\Ups'_j)\}_{n,j}$ represent the real source of outliers (such as human errors in measurement and input errors in the instrument, and so on) while the jumps from $(J,J')$ are of intrinsic components of the underlying dynamics. 
% We emphasize that our primary goal is to estimate the diffusion coefficient in the presence of discontinuous irregularities caused by both jumps and spikes, treating both types of variations as outliers in this specific context.
% Our results can effectively distinguish the discontinuous characteristics of the underlying model from its continuous ones in both practical and novel ways. See also Remark \ref{hm:rem_G0j} for related comments.
One may regard the spikes $\{(\Ups_j,\Ups'_j)\}_{n,j}$ as exogenous outliers—such as human or instrumental measurement errors—while viewing the jumps from $(J,J')$ as intrinsic components of the underlying dynamics.
Nevertheless, our primary goal is to estimate the diffusion coefficient under discontinuous irregularities caused by both jumps and spikes; thus, we treat both variations as outliers in this specific context.
Our method effectively distinguishes the continuous characteristics of the underlying model from its discontinuous features. See Remark \ref{hm:rem_G0j} for related comments.
}

Denote by $\pr_\theta$ the distribution of the random elements
\begin{equation}
\left(\ly,\lx,\mu,\mu',\sig',w,w',J,J',\{\Ups_{j}\}_{n,j},\{\Ups'_{j}\}_{n,j}\right)
\nonumber
\end{equation}
associated with $\theta\in\overline{\Theta}$ and the corresponding expectation by $\E_\theta$.
\footnote{
\hmrev{
As one of the reviewers indicated, it is natural to treat the spike contamination $(\Upsilon,\Upsilon')$ as an external mechanism independent of the underlying dynamics. 
% (as soon as it satisfies Assumption 2.3).
Although $(\Upsilon,\Upsilon')$ is under $\pr_\theta$'s rule, by splitting the underlying probability space without loss of generality, we may disintegrate the image measure $\pr_\theta$ as the product $\pr_\theta(du,dv)=\pr'_\theta(du) \otimes \pr^{(\Upsilon,\Upsilon')}(dv)$ where $\pr'_\theta$ denotes the joint distribution of $(\ly,\lx,\mu,\mu',\sig',w,w',J,J')$ and $\pr^{(\Upsilon,\Upsilon')}$ does that of $(\Upsilon,\Upsilon')$; here, $\pr^{(\Upsilon,\Upsilon')}$ is completely free from $\theta$ and independent of any other randomness driving the dynamics of $(\lx,\ly)$.
}
}
We will use the shorthands $\pr=\pr_{\tz}$ and $\E=\E_{\tz}$ with slight abuse of notation.
Moreover, let $\pr^{j-1}_\theta[\cdot]=\pr_\theta[\cdot|\mcf_{t_{j-1}}]$ and $\E^{j-1}_\theta[\cdot]=\E_\theta[\cdot|\mcf_{t_{j-1}}]$, the regular conditional $\pr_\theta$-probability given $\mcf_{t_{j-1}}$ and the associated conditional expectation. Further, we will abbreviate as $\pr^{j-1}[\cdot]=\pr^{j-1}_{\tz}[\cdot]$ and $\E^{j-1}[\cdot]=\E^{j-1}_{\tz}[\cdot]$.
%\begin{equation}
%\text{$\pr^{j-1}[\cdot]=\pr^{j-1}_{\tz}[\cdot]$ \quad and \quad $\E^{j-1}[\cdot]=\E^{j-1}_{\tz}[\cdot]$.}
%\nonumber
%\end{equation}
As usual, we will mostly omit the qualifier ``a.s.'' when mentioning these conditional quantities.

\medskip
% %%%
% \subsection{Assumptions}

We now state \serev{our assumptions}.
Let $S(x,\theta) := \sig(x,\theta)^{\otimes 2}$.
We use the shorthands $\sup_\theta$ and $\inf_\theta$ for $\sup_{\theta\in\overline{\Theta}}$ and $\inf_{\theta\in\overline{\Theta}}$, respectively.
\hmrev{We begin with some regularity conditions on the diffusion coefficient.}

\begin{ass}[Diffusion coefficient]
\label{hm:A_diff.coeff}~
\begin{enumerate}
\item The function $(x,\theta) \mapsto S(x,\theta)$ belongs to the class $\mcc^{2,4}(\mbbr^{d'} \times \Theta)$, with all the partial derivatives continuous in $\overline{\Theta}$ for each $x$, and moreover, $\theta \mapsto \p_x^k \p_\theta^l S(x,\theta)$ is continuous for each $x\in\mbbr^{d'}$ and admissible $(k,l)$.

\item 
$\ds{\sup_\theta |\p_x^k \p_\theta^l S(x,\theta)| \lesssim (1+|x|)^C}$ and there exists a constant $c_S'\ge 0$ such that
\begin{equation}
\inf_{\theta}\lam_{\min}(S(x,\theta)) \gtrsim (1+|x|)^{-c_S'}
\nn%\label{hm:A_diff.coeff-1}    
\end{equation}
for $x\in\mbbr^{d'}$.
% \item There exist constants $c_S,c_S',c_S'' \ge 0$ for which
% \begin{equation}
% \sup_\theta |\p_x^k \p_\theta^l S(x,\theta)| \lesssim (1+|x|)^{c_S},    
% \end{equation}
% \begin{equation}
% (1+|x|)^{-c_S'} \lesssim \inf_{\theta}\lam_{\min}(S(x,\theta)) \le \sup_{\theta}\lam_{\max}(S(x,\theta)) \lesssim (1+|x|)^{c_S''}
% \nn%\label{hm:A_diff.coeff-1}    
% \end{equation}
% for $x\in\mbbr^{d'}$.
\end{enumerate}
\end{ass}

%\begin{ass}[Diffusion coefficient]
%\label{hm:A_diff.coeff+}~
%The function $(x,\theta) \mapsto \sig(x,\theta)$ belongs to the class $\mcc^{1,3}(\mbbr^{d'} \times \Theta)$.
%For each admissible $(k,l)\in\{0,1,\dots\}^2$, $\theta \mapsto \p_x^k \p_\theta^l \sig(x,\theta)$ is continuous in $\theta\in\overline{\Theta}$ and 
%\begin{equation}
%\sup_x \sup_{\theta} {\lam_{\min}(S(x,\theta))}^{-1} + \sup_x \sup_\theta |\p_x^k \p_\theta^l S(x,\theta)| < \infty.
%\nn%\label{hm:A_diff.coeff-1}
%\end{equation}
%\end{ass}

For a process $\xi$, we will write $\D \xi_s = \xi_s - \xi_{s-}$ for the jump size of $\xi$ at time $s$, and $\D_j \xi = \xi_{t_j} - \xi_{t_{j-1}}$ for the $j$th increment of $\xi$.
Let
\begin{align}
   N_t := \sum_{0<s\le t} I(\D Y^\star_s \ne 0),
   \qquad 
   N'_t := \sum_{0<s\le t} I(\D X^\star_s \ne 0).
\end{align}

%We denote by $\{\mu^{Y}(I,B):\,I\subset\mbbrp,\,B\in\mcb(\mbbr^d\setminus\{0\})\}$ the family of the $\{0,1,\dots\}$-valued random jump measure of $J$ (see \cite{JacShi03} for details):
%\begin{equation}
%\mu^{Y}(dt,dz) := \sum_{s\in \mcd(J)} \ep_{(s,\D J_s(\omega))}(dt,dz),
%\nonumber
%\end{equation}
%where $\ep_a$ is the Dirac measure at point $a$ and where the random set $\mcd(J):=\{(\omega,t)\in\Omega\times(0,T]:\,\D J_t(\omega) \ne 0\}$, the collection of jump times of $J$; the distribution of $J$ may depend on $(X,\oy)$.

\begin{ass}[Jump structure]
\label{hm:A_J}~
\begin{enumerate}
\item The numbers of jumps of $\ly$ and $\lx$ are a.s. finite in $[0,T]$:
% $N_T \vee N'_T <\infty$ a.s.
\begin{equation}
    \pr\left[\max\{N_T,\, N'_T\} <\infty\right] =1.
\end{equation}
\item There exist constants $\kappa>1/2$ and $c_1 \ge 0$ for which
\begin{equation}\label{hm:A_J-1}
\pr^{j-1}\left[ \D_j N + \D_j N' \ge 1\right] \lesssim (1+|X_{t_{j-1}}|^{c_1}) \, h^{\kappa}
\end{equation}
for $j=1,\dots,n$.
\item $\ds{\sup_{t\le T}\E[|J_t'|^K]<\infty}$ for any $K>0$, and
% \begin{equation}\label{hm:A_J-2}
%     \sup_{t,s\in[0,T];\atop |t-s|< h} \E\left[|J'_t - J'_s|^2\right] \lesssim h^{c'}
% \end{equation}
\begin{equation}\label{hm:A_J-2}
    \sup_{\genfrac{}{}{0pt}{}{t,s\in[0,T];}{|t-s|< h}} \E\left[|J'_t - J'_s|^2\right] \lesssim h^{c'}
\end{equation}
for some $c'>0$.
\end{enumerate}
\end{ass}

\hmrev{Then, since the jump process $(J,J')$ is finite-activity,} we can write
% Under Assumption \ref{hm:A_J}, we can write
\begin{equation}
    J_t = \sum_{0<s\le t} \D Y^\star_s,
    \qquad J'_t = \sum_{0<s\le t} \D X^\star_s.
\end{equation}
It may happen that $J_\cdot(\omega)\equiv 0$ and/or $J'_\cdot(\omega)\equiv 0$ on $[0,T]$ with positive probability. No structural assumptions are made about the jump-occurrence mechanism and the jump-size distributions; for example, the jump component can be driven by the Hawkes-driven type studied in \cite{DioLemLoc20}. The simplest example of $(J,J')$ is a compound Poisson process, for which \eqref{hm:A_J-1} holds with $c_1=0$ and $\kappa=1$. The condition \eqref{hm:A_J-2} is met with $c'=1$ for a large class of jump processes.

Next, we impose structural assumptions on the spike-type contaminations $\{\Ups_{j}\}$ and $\{\Ups'_{j}\}$.

\begin{ass}[Spike-noise structure]
\label{hm:A_spike}~
\begin{enumerate}
\item For each $n\in\mbbn$ and $j\le n$, the random variable $(\Ups_j,\Ups'_j)\in\mbbr^d \times \mbbr^{d'}$ is $\mcf_{t_j}$-measurable.
% ($n\in\mbbn;~j=1,\dots,n$).

\item 
We have $|\Ups_j|\vee|\Ups'_j|>0$ a.s. only for finitely many $j\le n$ uniformly in $n\in\mbbn$:
\begin{equation}
%\mathrm{card}
\pr\left[
\sup_{n\in\mbbn} 
\#
\{j\le n :\, 
|\Ups_j-\Ups_{j-1}|\vee |\Ups'_{j-1}|
%|\Ups_j|\vee|\Ups'_j|
>0 \} < \infty \right] = 1.
\nonumber
\end{equation}

\item Given $\kappa>1/2$ in \eqref{hm:A_J-1},
\begin{equation}\label{hm:A_spike-1}
\pr^{j-1}\left[ 
|\Ups_j-\Ups_{j-1}|\vee |\Ups'_{j-1}|
% \vee|\Ups'_{j}|
>0 \right] \lesssim (1+|X_{t_{j-1}}|^{C}) h^{\kappa}
\end{equation}
for $j=1,\dots,n$.

\item $\ds{\sup_{n\ge 1}\max_{j\le n} \E[|\Ups'_{n,j}|^K]<\infty}$ for any $K>0$.

% \item For each $n\in\mbbn$ and $j\le n$, the random variables $(\Ups_j)$, $(w_t-w_s)_{t,s\in\mbbi_j}$, and $\{(N_t-N_s)_{t,s\in\mbbi_j},\,(N'_t-N'_s)_{t,s\in\mbbi_j}\}$ are $\mcf_{t_{j-1}}$-conditionally independent.

\end{enumerate}
\end{ass}

\medskip

Denote the $j$th short-time interval by
\begin{equation}
    \mbbi_j := (t_{j-1},t_j],
\end{equation}
and write $\int_j=\int_{\mbbi_j}$. 
By \eqref{hm:Y.spike}, we have
\begin{align}
\D_j Y 
&= \D_j Y^\star + \Ups_j - \Ups_{j-1} \nn\\
&= \int_j \mu_s ds + \int_j \sig(X^\star_{s-},\tz)dw_s + \D_j J + \Ups_j - \Ups_{j-1}.
% &= \sig(X^\star_{t_{j-1}},\tz) \D_j w + \left(\int_j \mu_s ds + \int_j (\sig(X^\star_{s-},\tz) - \sig(X^\star_{t_{j-1}},\tz))dw_s\right)
% \nn\\
% &{}\qquad + \left(\D_j J + \Ups_j - \Ups_{j-1}\right),
\label{hm:DYj_form}
\end{align}
% \hmrev{Our quasi-likelihood functions consist of sums of functionals of the variables $\D_j Y$ and $X_{t_{j-1}}$ (see \eqref{hm:def_dp-H-0} and \eqref{hm:def_ldp-H-0} below). By \eqref{hm:X.spike} and \eqref{hm:DYj_form}, the effects of the spikes contained in $(X_{t_{j-1}},\D_j Y)$ are expected to disappear under Assumption \ref{hm:A_spike-1}, and indeed, our proofs will proceed as such. This is a rough rationale for why the increments $\Ups_j-\Ups_{j-1}$ are included therein, whereas they are not for $\Ups’$.}

In deriving the key limit theorems \serev{(Appendix \ref{hm:sec_aux.asymp})}, we will need to manage the effect of contaminations caused by $(J,\{\Ups_j\},J',\{\Ups'_j\})$. To control the term ``$\D_j J + \Ups_j - \Ups_{j-1}$'' on the right-hand side of \eqref{hm:DYj_form}, we introduce the (Good) event
% $G_j = G_{n,j} := G_{1,j} \cap G_{2,j}\in\mcf_{t_j}$ 
\begin{equation}\label{hm:def_Gj}
    G_j = G_{n,j} := G_{1,j} \cap G_{2,j}\in\mcf_{t_j},
\end{equation}
where
\begin{align}
G_{1,j} &:= \left\{ \D_j N = 0,~ \D_j N' = 0\right\}
\nn\\
&=\{ \text{There are no jumps on $\mbbi_j$ for both $J$ and $J'$} \}, \nn\\
G_{2,j} &:= \left\{ |\Ups_j-\Ups_{j-1}|=0, ~ |\Ups'_{j-1}| = 0\right\}.
% G_{2,j} &:= \left\{ |\Ups_j-\Ups_{j-1}| \tcm{= |\Ups'_{j-1}| = |\Ups'_{j}|} = 0\right\}.
\nonumber
\end{align}
On $G_j$, we have
\begin{align}
    \D_j Y
    &= \int_{j} \mu_s ds + \int_{j} \sig(X^\star_{s-},\tz)dw_s,
    \label{hm:DY_Gj}\\
\ox_t &= \lx_{t_{j-1}} + \int_{t_{j-1}}^t \mu'_s ds + \int_{t_{j-1}}^t \sig'_{s-}dw'_s
% \qquad t\in\mbbi_j^\circ =(t_{j-1},t_j).
\label{hm:X.obs_Gj}
\end{align}
for $t\in\mbbi_j^\circ =(t_{j-1},t_j)$. By splitting $\Omega$ into $G_j$ and $G_j^c$ for each $j\le n$, we will utilize:
\begin{itemize}
    \item On $G_j$, the non-contaminated nature of \eqref{hm:DY_Gj} and \eqref{hm:X.obs_Gj}, which enables us to proceed as in the continuous model \eqref{hm:csm.reg} and \eqref{hm:csm.reg.X};
    \item On $G_j^c$, the essential boundedness of the integrands of the proposed quasi-likelihood through the tapering factor $\phi(\cdot)^\lam$ for a tuning parameter $\lam>0$, making the corresponding terms asymptotically negligible.
\end{itemize}
The above operations through $G_j$ will appear only in the proof and are implicit in computing our estimator in practice; the specific two forms of the proposed quasi-likelihoods will be given in \eqref{hm:def_dp-H-0} and \eqref{hm:def_ldp-H-0} in Section \ref{hm:sec_robust.GQLFs}.

% \color{magenta}
% Note that, for each $n$, sample paths of $Y$ and $X$ are a.s. Riemann-integrable while they may not be right-continuous when blurred by ``spike'' noises $\{\Ups_j\}$ and $\{\Ups'_j\}$. 
% In particular, for any $K>0$, we have
% \begin{equation}
% \label{hm:X-R.integ}
%     \frac1n \sumj (1+|\lx_{t_{j-1}}|^K+|\ox_{t_{j-1}}|^K)
%     % = \frac1T\int_0^T (1+|X_t|^K)dt + o_p(1) 
%     = O_p(1)
% \end{equation}
% since $\ox_{t_{j-1}}=\lx_{t_{j-1}}+\Ups'_{j-1}$ and, by Assumption \ref{hm:A_spike}, $n^{-1}\sumj |\Ups'_{j-1}|^K=O_p(1)$.
% % \footnote{
% % Because of the a.s. Riemann integrability of sample path $t\mapsto X_t(\omega)$, given an $\ep>0$ and a continuous $g:\,\mbbr^{d'}\to\mbbr$, we can find a constant $M=M(\ep,g)>0$ for which
% % \begin{equation}
% % \sup_n \pr\bigg[\bigg| \frac1n \sumj g(X_{t_{j-1}})\bigg| > M\bigg] < \ep.
% % \label{hm:prob.ineq_g}
% % \end{equation}
% % Indeed, given any $\ep>0$, pick a $K>0$ such that $\pr[\sup_{t\le T}|X_t|>K]<\ep$ (possible because of the {\cadlag} property of $X$) and then choose a sufficiently large $M>0$ such that $\sup_{|x|\le K}|g(x)|\le M/2$; then, the left-hand side in \eqref{hm:prob.ineq_g} is bounded by $\ep$.
% % }
% \color{black}

\medskip

%We write $\|\zeta\|_a = (\E[|\xi|^a])^{1/a}$ for $a>0$ and a vector-valued random variable $\zeta$.

Let us denote by $\cx$ the continuous (no-jump and no-spike) version of $X$:
\begin{equation}
    \cx := X^\star - J'.
\end{equation}
% \begin{align}
%     \cy_t &= Y^\star_t - J_t, \nn\\
%     \cx_t &= X^\star_t - J'_t, \nn
% \end{align}
\serev{Moreover, we denote $E[\mathsf{X};A]=E[\mathsf{X} \,I_{A}]$ for any random variable $\mathsf{X}$ and any event $A\in\mcf$.}
We impose several integrability conditions.

\begin{ass}[Drift coefficient and covariate process]
\label{hm:A_drif.coeff&X}~
$\mu=(\mu_t)_{t\le T}$, $\mu'=(\mu'_t)_{t\le T}$ and $\sig'=(\sig'_t)_{t\le T}$ are $(\mcf_t)$-adapted {\cadlag} processes in $\mbbr^d$, $\mbbr^{d'}$, $\mbbr^{d'}\otimes\mbbr^{r'}$, respectively, such that 
% for some constants $c_\mu>0$ and $c_{\sig'}$ we have 
\begin{align}
& \sup_{t\in[0,T]} \E\left[|\lx_0|^K + |\mu_t|^K + |\mu'_t|^K + |\sig'_t|^K\right] <\infty,
\nn\\
& 
\max_{1\le j\le n}\sup_{t\in\mbbi_j^\circ} 
\left(
\E\left[|\mu_t - \mu_{t_{j-1}}|^K; G_j\right] 
+ 
\E\left[ |\sig'_t - \sig'_{t_{j-1}}|^2 \right] 
\right)
=o(1)
% \sup_{n\ge 1}\max_{1\le j\le n}\sup_{t\in\mbbi_j^\circ} \E\left[|h^{-c_\mu}(\mu_t - \mu_{t_{j-1}})|^K; G_j\right] <\infty,
% \nn\\
% & 
% ...\sup_{n\ge 1}\max_{1\le j\le n}\sup_{t\in\mbbi_j^\circ} \E\left[|h^{-c_{\sig'}}(\mu_t - \mu_{t_{j-1}})|^K; G_j\right] <\infty.
%
% & \sup_{t\in\mbbi_j^\circ} \E^{j-1}[|\mu_t|^K; G_j] + \sup_{t\in\mbbi_j^\circ} \E^{j-1}\left[|h^{-c_\mu}(\mu_t - \mu_{t_{j-1}})|^K; G_j\right]
% \nn\\
% &{}\qquad \le C (1+|\lx_{t_{j-1}}|)^C,\qquad j\le n.
% \nn\\
% & 
% \max_{j \le n} \sup_{t\in\mbbi_j^\circ}\E\left[ |\sig'_s - \sig'_{t_{j-1}}|^2 \right] = o(1).
\end{align}
for any $K\ge 2$.
\end{ass}

From Assumptions \ref{hm:A_J}, \ref{hm:A_spike}, and \ref{hm:A_drif.coeff&X}, it is easily seen that for any $K>0$,
\begin{equation}\label{hm:X_m.bound}
  \sup_{t\le T} \left(\E\left[|\lx_t|^K\right] + \E\left[|\ox_t|^K\right] \right) < \infty,
\end{equation}
\begin{equation}\label{hm:nY_moment}
    \max_{j\le n} \E\left[\left|\frac{\D_j Y}{\sqrt{h}}\right|^K;\,G_j\right] = O(1),
\end{equation}
and
\begin{equation}\label{hm:X-moment.estimates}
    \sup_{s,t\le T;\,|t-s|\le h} \left( \E\left[\left|\frac{1}{\sqrt{h}}(\cx_t - \cx_s)\right|^K\right] + \E\left[\left|\frac{1}{h^{(1 \wedge c')/2}}(\lx_t - \lx_s)\right|^2\right] \right) < \infty,
\end{equation}
where $c'>0$ is the one given in \eqref{hm:A_J-2}.
Moreover,
\begin{equation}
    \sup_{s,t\le T;\,|t-s|< h} \E\left[\left|\frac{1}{h^{c''}}(\ox_t - \ox_s)\right|^2\right] < \infty
\end{equation}
for some constant $c''>0$.

We also need the identifiability condition.

\begin{ass}[Identifiability]
\label{hm:A_iden}
We have
\begin{equation}
\pr\big[\forall t\in[0,T],~S(X^\star_t,\theta) = S(X^\star_t,\tz)\big]=1
\label{hm:assump_iden}
\end{equation}
if and only if $\theta=\tz$.
\end{ass}

\medskip

Finally, our estimation procedure will depend on a tuning parameter $\lam>0$, which is assumed to satisfy
\begin{equation}\label{hm:lam.condition-1}
    \lam \in (0,\overline{\lambda}]
\end{equation}
for some $\overline{\lambda}\in(0,\infty)$.
It is often enough to only consider $\lam\in(0,1]$; we do not need to specify the value $\overline{\lam}$ in practice.
We will allow the parameter $\lam>0$ satisfying \eqref{hm:lam.condition-1} to depend on the sample size $n$, say $\lam_n$.

\begin{ass}[Tapering parameter]
\label{hm:A_lam}
We have either
\begin{enumerate}
    \item $\lam_n\equiv \lam>0$ (a fixed constant), or
    \item $\lam_n\to 0$ in such a way that for $\kappa>1/2$ in \eqref{hm:A_J-1},
    \begin{equation}
    \label{hm:lam.condition-2}
    % \lam=\lam_n \in (0,\overline{\lam}],\qquad 
    \frac{\sqrt{n}\,h^{\kappa}}{\lam_n} \to 0.
    \end{equation}
\end{enumerate}
\end{ass}

% The condition \eqref{hm:lam.condition-2} is equivalent to $n^{\kappa-1/2}\lam_n \to \infty$, and is automatic when $\liminf_n \lam_n>0$. We will only consider $\lam_n\equiv \lam>0$ or $\lam_n\downarrow 0$; in the latter case, $\lam_n$ must not decrease too quickly but can be arbitrarily slow.
The condition \eqref{hm:lam.condition-2} is equivalent to $n^{\kappa-1/2}\lam_n \to \infty$, implying that the rate at which $\lam_n \to 0$ must not be too fast, though it can be arbitrarily slow.
This seems natural, since our asymptotic results fail to hold for $\lam=0$ in the presence of the contaminations. We will see that the asymptotic covariance matrices of our estimators under Assumption \ref{hm:A_lam}(1) are continuous at $\lam=0$.
% since our asymptotic results hold under both Assumption \ref{hm:A_lam}(1) and Assumption \ref{hm:A_lam}(2), while they are broken for $\lam=0$ in the presence of the contaminations.

%%%%%
\subsection{Preliminary observation}
\label{hm:sec_ex.levy}

As a toy model, let us briefly look at the $d$-dimensional {\lp}
% with compound Poisson jumps:
\begin{equation}
Y_t = Y_0 + \sig(\theta)\, w_t + J_t,
\label{hm:model.Levy}
\end{equation}
where %$\sig:\,\overline{\Theta} \to \mbbr^{d} \otimes \mbbr^{r}$ is a measurable function and 
$J_t = \sum_{j=1}^{N_t}\xi_j$ is a compound-Poisson process with a Poisson process $N$ with intensity $\rho>0$ and i.i.d. (jump-size) random variables $\xi_j$ in $\mbbr^d$ such that $\pr[|\xi_1|=0]=0$.
The constant diffusion matrix $S(\theta)=\sig(\theta)^{\otimes 2}$ fulfills that $S(\cdot) \in \mcc^3(\overline{\Theta})$ and $\inf_{\theta\in\overline{\Theta}}\lam_{\min}(S(\theta))>0$.
%\begin{equation}
%S(\cdot) \in \mcc^3(\overline{\Theta}),\qquad 
%\inf_{\theta\in\overline{\Theta}}\lam_{\min}(S(\theta))>0.
%\nn%\label{hm:Levy_assump}
%\end{equation}
%where the former means that $S(\cdot)\in\mcc^3(\Theta)$ and $\p_\theta^k S(\cdot)$ is continuous in $\theta\in\overline{\Theta}$ for $k\in\{0,1,2,3\}$.
%Denote by $\pr_\theta$ the distribution of $(Y_0,w_\cdot,N_\cdot,(\zeta_j)_{j\in\mbbn})$ and the corresponding expectation by $\E_\theta$. We assume that there exists a true value $\tz\in\Theta$ for which $\pr_{\tz}$ coincides with the distribution $\mcl(Y)$; note that knowing $\theta$ does not completely determine the $\mcl(Y)$.
%Then, all the assumptions in Section \ref{hm:sec_setup} are satisfied.
With these settings, we want to estimate the true value $\tz$ from a sample $(Y_{t_j})_{j=0}^{n}$ without knowing $\rho$ and $\mcl(\xi_1)$. 
% ; we regard jumps as outliers that disturb estimation of the diffusion parameter $\tz$. 
Although the model \eqref{hm:model.Levy} is a rather special case of the model \eqref{hm:model.Y.star}, a closer look at this setting will clarify some essence of the present study.

The {\lm} of $J$ is given by $\nu(dz)=\rho F(dz)$, where $F(dz)$ is the non-trivial distribution of $\xi_1$ with $F(\{0\})=0$. Let $F^{\ast k}$ denote the $k$-fold convolution of $F$ (with $F^{\ast 0}$ being the Dirac measure at $0$).
% and by $\pr^{\xi}$ the distribution of a random variable $\xi$.
Since $J$ is a {\cpp}, $\D_j J$ has the same distribution as $J_h$, say $\pr^{J_h}$, and
\begin{equation}
\pr^{J_h}(dz)=e^{-\rho h}\sum_{l=0}^\infty \frac{(\rho h)^{l}}{l!} F^{\ast l}(dz).
\nonumber
\end{equation}
We have
\begin{equation}
\mcl(Y_{t_j}|Y_{t_{j-1}}=x) = N_d\left(x, h S(\theta)\right) \ast P^{J_h}
\nonumber
\end{equation}
under $\pr_\theta$. This conditional distribution has a non-degenerate Gaussian part; hence, it admits a bounded, positive, and smooth density (see \cite[Proposition 28.1]{Sat99} and \cite{Sha69}); we denote by $p_h(x,y;\theta)$ the corresponding transition density.
Then,
\begin{align}
p_h(x,y;\theta)
&= \int \phi_d(y-x-z;0,h S(\theta))P^{J_h}(dz)
\nn\\
&= e^{-\rho h}\phi_d(y-x;0,h S(\theta)) 
\nn\\
&{}\qquad 
+ e^{-\rho h}\sum_{l=1}^\infty \frac{(\rho h)^{l}}{l!} \int \phi_d(y-x-z;0,h S(\theta))F^{\ast l}(dz).
% \nn\\
% &=\phi_d(y-x;0,h S(\theta)) + h\bigg\{\bigg(\frac{e^{-\rho h} -1}{h}\bigg) \phi_d(y-x;0,h S(\theta))
% \nn\\
% &{}\qquad + e^{-\rho h}\rho \sum_{l=1}^\infty \frac{(\rho h)^{l-1}}{l!} \int \phi_d(y-x-z;0,h S(\theta))F^{\ast l}(dz)\bigg\}.
\label{hm:wp.cpp_trans.dens-1}
\end{align}
Note that the second identity corresponds to the mixture-distribution representation
\begin{align}
p_h(x,y;\theta)
&= e^{-\rho h}\phi_d(y-x;0,h S(\theta)) + (1-e^{-\rho h}) \bar{r}_h(x,y;\theta)
\label{hm:wp.cpp_trans.dens-2}
\end{align}
for the two probability densities $y\mapsto\phi_d(y-x;0,h S(\theta))$ and
\begin{align}
\bar{r}_h(x,y;\theta) := \frac{e^{-\rho h}}{1-e^{-\rho h}}\sum_{l=1}^\infty \frac{(\rho h)^{l}}{l!} \int \phi_d(y-x-z;0,h S(\theta))F^{\ast l}(dz).
\nonumber
\end{align}
% By \eqref{hm:wp.cpp_trans.dens-1} and \eqref{hm:wp.cpp_trans.dens-2}, 
It is expected that we may effectively ignore the terms corresponding to the event where $\D_j N \ge 1$ as $O(h)$-quantities in computing the conditional expectation of the form
\begin{align}
\int K(x,y) p_h(x,y;\theta)dy &= e^{-\rho h} \int K(x,y) \phi_d(y-x;0,h S(\theta)) dy 
\nn\\
&{}\qquad + (1-e^{-\rho h}) \int K(x,y) \bar{r}_h(x,y;\theta)dy,
\nonumber
\end{align}
where the integrand $K$ is essentially bounded so that the second term on the right-hand side becomes negligible compared with the first one.
These observations are significant when dealing with a bounded quasi-likelihood when $Y$ admits a sufficiently smooth transition density.

The previous arguments are informative. We will implicitly take an analogous route to handle the general model $(X,Y)$ given by \eqref{hm:Y.spike} and \eqref{hm:X.spike} through the sequence of good events $(G_j)$.

%%%%%
\subsection{Gaussian quasi-likelihood}

We now return to the original model setup described in Section \ref{hm:sec_setup}. For later reference, we recall the conventional Gaussian quasi-likelihood (GQLF) for $Y^\star$ without jumps.
We will write
\begin{equation}
f_{j-1}(\theta) = f(\ox_{t_{j-1}},\theta)
\nonumber
\end{equation}
for any measurable function $f$ defined on $\mbbr^{d'}\times\mbbr^d\times\overline{\Theta}$; we will just write $f_{j-1}$ if the argument $\theta$ is missing. 
The naive Euler approximation, which ignores the drift, the jump component, and the spike-noise structure \eqref{hm:Y.spike}, is given by (under $\pr_\theta$)
\begin{equation}
Y_{t_j} \overset{\pr_\theta}{\approx} Y_{t_{j-1}} + h \sig_{j-1}(\theta)\D_j w.
\label{hm:Euler}
\end{equation}
Let us write
\begin{equation}
    \mathsf{d}_{j-1}(\theta) = \det(S_{j-1}(\theta)).
\end{equation}
The GQLF associated with \eqref{hm:Euler} is defined by
\begin{align}
\mbbh_{n}(\theta) &:= \sumj \log \phi\left(Y_{t_j};\, Y_{t_{j-1}},\, h S_{j-1}(\theta)\right)
\nn\\
&= \text{(Const.)} - \frac12 \sumj \left( \log\mathsf{d}_{j-1}(\theta) + \frac1h S_{j-1}(\theta)^{-1}[(\D_j Y)^{\otimes 2}] \right).
\label{hm:GQLF}
\end{align}
We refer to \cite{GenJac93} and \cite{UchYos13} for asymptotics of this GQLF in case where $J,J'\equiv 0$ and $\Ups_{j},\Ups'_{j}\equiv 0$.

We denote by
\begin{equation}
\nY_j = \nY_{n,j} := h^{-1/2}\D_j\oy
\nonumber
\end{equation}
the $j$th increments of $Y$ scaled by $h^{-1/2}$ and then let
\begin{align}
\phi_j(\theta)=\phi_{n,j}(\theta) 
&:= \phi\left(\oy_{t_j};\, \oy_{t_{j-1}},\, h S_{j-1}(\theta)\right)
\nn\\
&=\frac{1}{h^{d/2} \mathsf{d}_{j-1}(\theta)^{1/2}}\phi\left( S_{j-1}(\theta)^{-1/2}\nY_j\right).
\label{hm:deg_phi.j}
\end{align}
The Gaussian quasi-score function associated with \eqref{hm:GQLF} is given by $\p_\theta\mbbh_n(\theta)=\sumj \psi_j(\theta)$ with
\begin{align}
\psi_j(\theta)
&=\{\psi_{j,k}(\theta)\}_{k=1}^{p}
:=\p_\theta\log \phi_j(\theta) \nn\\
&= -\frac12 \left( \p_\theta \log\mathsf{d}_{j-1}(\theta) + \p_\theta(S_{j-1}^{-1})(\theta)[\nY_j^{\otimes 2}] \right).
\label{hm:deg_psi.j}
% \\
% &= \frac12 
% \trace\left\{
% S_{j-1}(\theta)^{-1} \p_\theta S_{j-1}(\theta)\,
% \left(
% S_{j-1}(\theta)^{-1}\nY_j^{\otimes 2} - I_d
% \right)
% \right\},
% \label{hm:deg_psi.j}
\end{align}
% where we used the differential formulae $\p\log\det(A)=\trace(A^{-1}\p A)$ and $\p(A^{-1})= - A^{-1} (\p A) A^{-1}$ for an invertible symmetric matrix $A$.
The unbounded Gaussian quasi-score function $\p_\theta\mbbh_n(\theta)$ is fragile against outliers. Roughly speaking, the variable $h^{-1/2}(\D_j Y)$ is no longer stochastically bounded, rendering the asymptotics (of \cite{GenJac93} and \cite{UchYos13}) theoretically untenable.
As we mentioned in the Introduction, a popular strategy to remedy this point is to employ a jump-detection filter. In the subsequent sections, we will proceed with an entirely different route.

%%%%%
%%%%%
\section{Robustified Gaussian quasi-likelihood functions}
\label{hm:sec_robust.GQLFs}

In this section, we will consider the two robustified variants of GQLFs through the density-power weighting and the H\"{o}lder inequality, and then present our main result. 

%%%%%
\subsection{Density-power divergence}

In general, the density-power divergence (also known as $\beta$- or BHHJ divergence) from the true distribution $g d\mu$ to the statistical model $f_{\theta}d\mu$, where $\mu$ denotes some dominating $\sig$-finite measure, is defined by
\begin{align}
%(f_{\theta};g) \mapsto \int \left( f_{\theta}^{1+\lambda} - \left(1+\frac{1}{\lambda}\right)f_{\theta}^\lambda g + \frac{1}{\lambda}g^{1+\lambda}\right) d\mu,
(f_{\theta};g) 
&\mapsto \frac{1}{1+\lambda}\int \left( f_{\theta}^{1+\lambda} - \left(1+\frac{1}{\lambda}\right)f_{\theta}^\lambda g + \frac{1}{\lambda}g^{1+\lambda}\right) d\mu
\nn\\
&= \frac{1}{\lambda+1}\int f_{\theta}^{1+\lambda}d\mu - \frac{1}{\lambda}\int f_{\theta}^\lambda g d\mu 
+ \frac{1}{\lambda(\lambda+1)}\int g^{1+\lambda} d\mu.
\nn%\label{hm:def_gen.dp.div}
\end{align}
This is a nonnegative quantity, which becomes zero if and only if $\mu(g=f_{\theta})=1$. 
Through one tuning parameter $\lambda\ge 0$, the density-power divergence smoothly connects the outlier-sensitive Kullback-Leibler divergence $(f_{\theta};g) \mapsto \int \log(g/f_{\theta})g d\mu$ for $\lambda \to 0$ and the outlier-resistant $L^2$-distance $(f_{\theta},g) \mapsto \int (f_{\theta}-g)^2 d\mu$ for $\lambda = 1$.
This means that the minimum contrast estimator associated with the density-power divergence bridges the maximum-likelihood estimator and the $L^2$-distance one, which are respectively defined to be minimizers of the empirical counterparts of $\theta\mapsto -\int (\log f_\theta) g d\mu$ and $\theta\mapsto \int f_\theta^2 d\mu - 2 \int f_\theta g d\mu$.
The density-power divergence enables us to balance between them, providing a practical and transparent estimation procedure which is robust against outliers without requiring any nonparametric-type smoothing.
See \cite{BasHarHjoJon98}, \cite{JonHjoHarBas01}, \cite[Chapter 9]{BasShiPar11}, and the references therein for more details.
%\cite{GHMBP17}.

Turning back to our framework, we consider the density-power weighting of the score function associated with the GQLF $\mbbh_n(\theta)$ of \eqref{hm:GQLF} by multiplying the (non-predictable) weight $\phi_j(\theta)^\lambda$ to each summand, namely $\sumj \phi_j(\theta)^\lambda \psi_j(\theta)$ with $\psi_j(\theta)$ given by \eqref{hm:deg_psi.j}. In each $\mbbi_j$, this weight mitigates discontinuous variations, which are, in our setting, caused by jumps and/or spikes and are much larger compared with the continuous variation due to the drift and diffusion coefficients.
% Recall that the tuning parameter $\lambda$ is assumed to satisfy \eqref{hm:lam.condition-2} and \eqref{hm:lam.condition-1}.

Since the weighting entails a bias in the estimating equation, we need the compensation:
\begin{equation}
\theta\mapsto \sumj \left( \phi_j(\theta)^\lambda\psi_j(\theta) - \E_\theta^{j-1}[\phi_j(\theta)^\lambda\psi_j(\theta)]\right)
\label{hm:tGQ.score}
\end{equation}
to obtain the associated genuine martingale estimating function for estimating $\theta$. 
The conditional distribution $\mcl(Y_{t_j}|\mcf_{t_{j-1}})$ can rarely be explicitly given, so that \eqref{hm:tGQ.score} cannot be of direct use in practice.

Because of the Euler scheme \eqref{hm:Euler}, it is natural to approximate $\nY_j$ by $\sig_{j-1}Z_j$, where 
% $Z_j=Z_{n,j}:=h^{-1/2}\D_j w$ 
\begin{equation}\label{hm:Z_def}
    Z_j=Z_{n,j}:=h^{-1/2}\D_j w,\qquad j=1,\dots,n,
\end{equation}
forms an $N_r(0,I_r)$-i.i.d. sequence for each $n$.

As was mentioned in Section \ref{hm:sec_ex.levy}, with the density-power weighting under the high-frequency sampling scheme, we may expect that jumps and/or spikes are automatically ignored. This wishful thinking will be justified in Section \ref{hm:sec_robustification}, which in particular enables us to identify the ``leading'' term of $\E_\theta^{j-1}[\phi_j(\theta)^\lambda\psi_j(\theta)]$ in an explicit way and will serve as a basic tool in our asymptotic analyses.

% To proceed more formally, let
% \begin{equation}
% c_n=c_n(d,\lambda) := h^{d\lambda/2}
% \nonumber
% \end{equation}
% for $\lam>0$. 
By \eqref{hm:deg_psi.j},
\begin{align}
h^{d\lambda/2} \phi_j(\theta)^\lambda \psi_j(\theta) &= -\frac12 \mathsf{d}_{j-1}(\theta)^{-\lambda/2}
\phi\big(S_{j-1}(\theta)^{-1/2}\nY_j\big)^\lambda 
\nn\\
&{}\qquad \times \left\{ \p_\theta \log \mathsf{d}_{j-1}(\theta)
% l_{j-1}(\theta) 
+ \p_\theta (S_{j-1}^{-1})(\theta)[\nY_j^{\otimes 2}]\right\},
\label{hm:dp-const-1}
\end{align}
which is, roughly speaking, an $O_p(1)$-quantity when there is no comtamination; without the multiplicative factor $h^{d\lambda/2}$, it is stochastically divergent.
Further, let
\begin{align}
\phi_{j-1}(y;\theta) &:= \phi\left(y; \oy_{t_{j-1}},\, h S_{j-1}(\theta) \right), \nn\\
\psi_{j-1}(y;\theta) &:= \p_\theta\log \phi_{j-1}(y;\theta).
\nonumber
\end{align}
\hmrev{Based on the contents of Appendix \ref{hm:sec_robustification}, where we will theoretically show that the discontinuous variations can be effectively ignored, it will turn out that we can proceed as if}
% \serev{Appendix \ref{hm:sec_aux.asymp}} we can obtain the expression
\begin{align}
\E_\theta^{j-1}[\phi_j(\theta)^\lambda\psi_j(\theta)]
&= 
\int \phi_{j-1}(y;\theta)^{\lam+1} \psi_{j-1}(y;\theta) dy
%\det(S_{j-1}(\theta))^{-1/2} \int_{\mbbr^d} \phi(z)^{\lam + 1} \psi_{j-1}(S_{j-1}^{1/2} z;\theta) dz 
+ \frac{1}{h^{d\lambda/2}}\mathsf{R}_{j-1}(\theta;\lam)
\label{hm:intro.dp-1}
\end{align}
% $c_n=c_n(d,\lambda) := h^{d\lambda/2}$
for each $j\le n$ under $\pr_\theta$, where the ``remainder'' term $\mathsf{R}_{j-1}(\theta;\lam)$ will turn out to be negligible in a certain sense. 
The specification of the ``leading'' term is of theoretical importance since it is crucial in the general $M$-estimation framework to construct an approximate martingale estimating function.

% Because of \eqref{hm:tGQ.score} and \eqref{hm:intro.dp-1}, 
\hmrev{By integrating \eqref{hm:tGQ.score} with respect to $\theta$ \frev{
and approximating as 
% $\E_\theta^{j-1}[\phi_j(\theta)^\lambda\psi_j(\theta)] \approx \int \phi_{j-1}(y;\theta)^{\lam+1} \psi_{j-1}(y;\theta) dy$ 
\begin{equation}
    \E_\theta^{j-1}[\phi_j(\theta)^\lambda\psi_j(\theta)]
\approx \int \phi_{j-1}(y;\theta)^{\lam+1} \psi_{j-1}(y;\theta) dy
\nn
\end{equation}
}as suggested by \eqref{hm:intro.dp-1}, we obtain} the following random function to be maximized:
\begin{equation}
\theta\mapsto
\sumj \left( 
\frac{1}{\lam}\phi_j(\theta)^\lam - \frac{1}{\lam+1} \int \phi_{j-1}(y;\theta)^{\lam +1} dy
\right).
\label{hm:intro.dp-2}
\end{equation}
By the identity 
\begin{equation}\label{hm:phi.power-identity}
\int_{\mbbr^d}\phi(z;\mu,\Sig)^a dz = a^{-d/2} \det(2\pi \Sig)^{(1-a)/2}
\end{equation}
for $a>0$, we have
\begin{equation}
\frac{1}{\lambda+1}\int \serev{\phi_j(y;\theta)^{\lambda+1}} dy 
= h^{-d\lambda/2} K_{\lambda,d}\, \mathsf{d}_{j-1}(\theta)^{-\lambda/2},
\label{hm:intro.ldp-4}
\end{equation}
where
\begin{align}
K_{\lambda,d} &:= \frac{(2\pi)^{-d\lambda/2}}{(\lambda+1)^{1+d/2}}.
%\label{hm:def_K}
\nn
\end{align}
By multiplying \eqref{hm:intro.dp-2} by $h^{d\lambda/2}$, we introduce the fully explicit \textit{density-power GQLF}:
\begin{align}
\mbbh_{n}(\theta;\lambda) 
&=
\sumj \left( 
\frac{h^{d\lambda/2}}{\lam}\phi_j(\theta)^\lam - K_{\lambda,d}\, \mathsf{d}_{j-1}(\theta)^{-\lambda/2}
\right)
\nn\\
&=\sumj \mathsf{d}_{j-1}(\theta)^{-\lambda/2} 
\left( \frac{1}{\lambda}\phi\left(S_{j-1}(\theta)^{-1/2}\nY_j\right)^\lambda 
- K_{\lambda,d} \right).
\label{hm:def_dp-H-0}
\end{align}
Given a value $\lambda>0$, we define the \textit{density-power GQMLE} by any element
\begin{align}
\tes(\lambda) \in \argmax_{\theta \in\overline{\Theta}}\mbbh_{n}(\theta;\lambda).
\label{hm:def_tes.beta}
\end{align}
The continuity of $\mbbh_{n}(\cdot;\lambda)$ and the measurable selection theorem ensure that there always exists a measurable $\tes(\lambda)$.

% \begin{rem}\normalfont
% %(To be removed later) Recall that $\phi_j(\theta)=\phi(y; \oy_{t_{j-1}},\, h S_{j-1}(\theta) )$.
% %By \eqref{hm:intro.ldp-4}, we see from \eqref{hm:def_dp-H-0} that
% %\begin{align}
% %\mbbh_{n}(\theta;\lambda) 
% %&=\sumj \left( \frac{c_n}{\lambda}\phi_j(\theta)^\lambda - \frac{c_n}{\lambda+1}
% %\int \phi\left(y; \oy_{t_{j-1}},\, h S_{j-1}(\theta) \right)^{\lambda+1} dy\right).
% %\label{hm:def_dp-H}
% %\end{align}
% The Euler approximation, which is \eqref{hm:Euler} in our case, 
% % The expression \eqref{hm:def_dp-H-0} 
% was the starting point in the previous works \cite{LeeSon13}, \cite{Son17}, and \cite{Son20}. The papers were concerned with the ergodic diffusion processes and did not consider the theoretical properties of the associated estimator in the presence of contamination.
% \end{rem}

\begin{rem}\normalfont
\label{hm:rem_dpgqlf.form}
An application of l'H\^opital's rule shows that for $\lambda\to 0$ with $n$ fixed,
\begin{align}
& \frac{1}{h^{d\lambda/2}}\mbbh_{n}(\theta;\lambda) - \frac{n}{\lambda} + \frac{n}{h^{d\lambda/2}}
\nn\\
&=\sumj
\left( \frac{1}{\lambda}\left(\phi_j(\theta)^\lambda -1\right) 
- \frac{1}{h^{d\lambda/2}}\left( K_{\lambda,d}\, \mathsf{d}_{j-1}(\theta)^{-\lambda/2} -1\right) \right)
\nonumber
\end{align}
a.s. tends to the conventional GQLF $\mbbh_n(\theta)$ of \eqref{hm:GQLF}; 
the first summand on the right-hand side equals the Box-Cox transform of $\phi_j(\theta)$.
The above-mentioned facts are worth noting, although we do not use them at all.
\end{rem}

%%%%%
\subsection{H\"{o}lder-based divergence}
%}Logarithmic density-power divergence}

The normalized-score-based divergence, also known as $\gam$- or JHHB divergence, can be seen as the ``logarithmic'' version of the density-power divergence (see \eqref{hm:def_log-dp.div} below). Its origin goes back to \cite{Win95} and was then studied by \cite{JonHjoHarBas01}, \cite{FujEgu08}, and \cite{Fuj13} in detail. In this section, we will describe how this divergence can apply to our model setup. 
%\hmprev{As} we will observe in Remark \ref{hm:rem_ns.eq} below, in the dynamic-structure (or more broadly, some inhomogeneous conditional-distribution) model, it is not clear if the partial derivative with respect to $\theta$ admits a normalized-score structure.
%and also \cite{TodHer17} for a measure-transform-based robustification of the Gaussian quasi-likelihood estimation in some engineering applications.

It is well-known that the above-mentioned divergence is closely related to the H\"{o}lder inequality. In our framework, we will not emphasize the ``normalized-score'' nature, but more simply, introduce the divergence from the viewpoint of the H\"{o}lder inequality.
In general, given two densities $f$ and $g$ with respect to a reference measure $\mu$ and a constant $\lam>0$, the H\"{o}lder inequality gives
% $\int f^\lam g d\mu \le (\int f^{\lam+1} d\mu)^{\lam/(\lam+1)}(\int g^{\lam+1} d\mu)^{1/(\lam+1)}$, 
\begin{equation}\label{hm:holder.ineq}
    \int f^\lam g d\mu \le \bigg(\int f^{\lam+1} d\mu\bigg)^{\lam/(\lam+1)} \bigg(\int g^{\lam+1} d\mu\bigg)^{1/(\lam+1)},
\end{equation}
from which we have
\begin{equation}
\left(\int g^{\lam+1} d\mu\right)^{1/(\lam+1)} 
- \int \frac{f^\lam }{(\int f^{\lam+1}d\mu)^{\lam/(\lam+1)}} \,g d\mu \ge 0,
\label{hm:intro.ldp-1}
\end{equation}
where the equality holds if and only if $g=f$ a.e.; this defines a divergence from the true (unknown) $g$ to the model $f$.

As in \eqref{hm:intro.dp-1}, the technical tool given in \serev{Appendix \ref{hm:sec_aux.asymp}} ensures that 
\begin{align}
\E_\theta^{j-1}[\phi_j(\theta)^\lam]
&= 
\int \phi_{j-1}(y;\theta)^{\lam+1} dy + \frac{1}{h^{d\lambda/2}}\mathsf{R}_{j-1}(\theta),
\label{hm:intro.ldp-2}
\end{align}
here again, the term $\mathsf{R}_{j-1}(\theta)$ being negligible in a certain sense. 
In view of \eqref{hm:intro.ldp-2} with \eqref{hm:intro.ldp-4}, by ignoring the term $\mathsf{R}_{j-1}(\theta)$ in the former, it is natural to estimate $\tz$ by a maximizer of the following empirical counterpart of \eqref{hm:intro.ldp-1}:
\begin{align}
\mbbh^{\flat}_n(\theta;\lam) 
&:= \sumj \frac{\phi_j(\theta)^\lam}{\left( \int \phi_{j-1}(y;\theta)^{\lam+1}dy \right)^{\lam/(\lam+1)}}
\label{hm:intro.ldp-5}\\
&= %\text{($\theta$-free positive const.)}
(h^{d\lambda/2})^{-1/(\lam+1)} \lam \left\{ (\lam+1) K_{\lam,d} \right\}^{-\lam/(\lam+1)}
\nn\\
&{}\qquad \times \sumj \frac{1}{\lam} \mathsf{d}_{j-1}(\theta)^{-\lam/(2(\lam+1))} \phi\big(S_{j-1}(\theta)^{-1/2}\nY_j\big)^{\lambda}.
%\frac{\exp\{-(\lam/2)S_{j-1}^{-1}(\theta)[\nY_j^{\otimes 2}]\}}{\det(S_{j-1}(\theta))^{\lam/(2(\lam+1))}}
\nonumber
\end{align}
%\begin{align}
%%\mbbh_n(\theta;\lam) := \sumj 
%\theta
%&\mapsto \sumj\frac{\phi_j(\theta)^\lam}{\left(\E_\theta^{j-1}[\phi_j(\theta)^\lam]\right)^{\lam/(\lam+1)}}
%\nn\\
%&\approx \sumj\frac{\phi_j(\theta)^\lam}{\left( \int \phi_{j-1}(y;\theta)^{\lam+1}dy \right)^{\lam/(\lam+1)}}
%\nn\\
%&\approx
%\sumj\frac{\phi_j(\theta)^\lam}{\left( (\lam+1)^{-d/2} \det(2\pi h S_{j-1}(\theta))^{-\lam/2} \right)^{\lam/(\lam+1)}}
%\nn\\
%&= \sumj\frac{
%(2\pi)^{-\lam d/2} |h S_{j-1}(\theta)|^{-\lam/2}\exp\{-(\lam/2)S_{j-1}^{-1}(\theta)[\nY_j^{\otimes 2}]\}
%}{\left( (\lam+1)^{-d/2} \det(2\pi h S_{j-1}(\theta))^{-\lam/2} \right)^{\lam/(\lam+1)}}
%\nn\\
%&= \text{($\theta$-free positive const.)}\times 
%\sumj \det(S_{j-1}(\theta))^{-\lam/(2(\lam+1))} \phi\big(S_{j-1}(\theta)^{-1/2}\nY_j\big)^{\lambda}.
%%\frac{\exp\{-(\lam/2)S_{j-1}^{-1}(\theta)[\nY_j^{\otimes 2}]\}}{\det(S_{j-1}(\theta))^{\lam/(2(\lam+1))}}
%\nonumber
%\end{align}
By multiplying $\mbbh^{\flat}_n(\theta;\lam)$ by $(h^{d\lambda/2})^{1/(\lam+1)} \lam^{-1} \left\{ (\lam+1) K_{\lam,d} \right\}^{\lam/(\lam+1)}$, we introduce the \textit{H\"{o}lder-based GQLF}:
%\textit{logarithmic density-power GQLF}:
\begin{equation}
\mbbh_n(\theta;\lam) := \sumj \frac{1}{\lam} \mathsf{d}_{j-1}(\theta)^{-\lam/(2(\lam+1))} \phi\big(S_{j-1}(\theta)^{-1/2}\nY_j\big)^{\lambda},
\label{hm:def_ldp-H-0}
\end{equation}
where $\lam>0$ satisfies \eqref{hm:lam.condition-1}; this is an abuse of notation in conjunction with \eqref{hm:def_dp-H-0}, but there would be no confusion in the subsequent context.
As with \eqref{hm:def_tes.beta}, we define the \textit{H\"{o}lder-based GQMLE} 
%\textit{logarithmic density-power GQMLE} 
by any element
\begin{align}\label{hm:def_tes.gam}
\tes(\lambda) \in \argmax_{\theta \in\overline{\Theta}}\mbbh_{n}(\theta;\lambda).
\end{align}
Although the density-power GQLF \eqref{hm:def_dp-H-0} and the H\"{o}lder-based GQLF \eqref{hm:def_ldp-H-0} look similar, they were constructed from different viewpoints.

\medskip

It will be seen that both $\mbbh_n(\theta;\lam)$ of \eqref{hm:def_dp-H-0} and \eqref{hm:def_ldp-H-0} are defined in such a way that $n^{-1}(\mbbh_n(\theta;\lam)-\mbbh_n(\tz;\lam))$ admits a non-trivial limit in probability for both cases in Assumption \ref{hm:A_lam}.

%\frev{
\begin{rem}\normalfont
\label{hm:fv.rem_add1}
Our quasi-likelihood functions \eqref{hm:def_dp-H-0} and \eqref{hm:def_ldp-H-0} consist of sums of functionals of the variables $\D_j Y$ and $X_{t_{j-1}}$ (see \eqref{hm:def_dp-H-0} and \eqref{hm:def_ldp-H-0} below). By \eqref{hm:X.spike} and \eqref{hm:DYj_form}, the effects of the spikes contained in $(X_{t_{j-1}},\D_j Y)$ are expected to disappear under 
Assumption \ref{hm:A_spike}, 
% \eqref{hm:A_spike-1}, 
and indeed, our proofs will proceed as such. This is a rough rationale for why the increments $\Ups_j-\Ups_{j-1}$ are included therein, whereas they are not for $\Ups’$.    
\end{rem}
%}

\begin{rem}\normalfont
\label{hm:rem_ldpgqlf.form}
Here is an analogue to Remark \ref{hm:rem_dpgqlf.form}:
for $\lambda\to 0$ with $n$ fixed, we have
\begin{equation}
 \left( \int \phi_{j-1}(y;\theta)^{\lam+1}dy \right)^{\lam/(\lam+1)}=1+O(\lam^2)\qquad \text{a.s.,}
\end{equation}
from which we can deduce that 
\begin{align}
\frac{1}{\lam}\left(\mbbh^{\flat}_n(\theta;\lam) - n \right)
&=
\frac{1}{\lam}\left( (h^{d\lambda/2})^{-1/(\lam+1)} \left\{ (\lam+1) K_{\lam,d} \right\}^{\lam/(\lam+1)}
\mbbh_{n}(\theta;\lambda) - n\right)
\nn\\
&=\sumj \frac{1}{\lam} \left(
\frac{\phi_j(\theta)^\lam}{\left( \int \phi_{j-1}(y;\theta)^{\lam+1}dy \right)^{\lam/(\lam+1)}} - 1\right)
\nn\\
&= \sumj \left\{\frac{1}{\lam} \left(\phi_j(\theta)^\lam - 1\right) + O(\lam) \right\}
\nonumber
\end{align}
a.s. tends to the GQLF $\mbbh_n(\theta)$.
% \tcm{(HM) $\sqrt{n} \lam \to 0$ for the asymptotics when $\lam\to 0$ in QBIC?}
\end{rem}

\medskip

We end this section with the following brief yet theoretically important remark by pointing out a relation to the (approximate) martingale property of the associated estimating functions.
\serev{As we will observe below, in the dynamic-structure (or more broadly, some inhomogeneous conditional-distribution) model, it is not clear if the partial derivative with respect to $\theta$ admits a normalized-score structure.}

\begin{rem}\normalfont
\label{hm:rem_ns.eq}
The estimating equation $\p_\theta\mbbh^{\flat}_n(\theta;\lam)=0$ is given by
\begin{equation}
\sumj \frac{\phi_j(\theta)^\lam \psi_j(\theta)}{\left( \int \phi_{j-1}(y;\theta)^{\lam+1}dy \right)^{\lam/(\lam+1)}}
=
\sumj \frac{\phi_j(\theta)^\lam \int \phi_{j-1}(y;\theta)^{\lam+1} \psi_{j-1}(y;\theta) dy}{\left( \int \phi_{j-1}(y;\theta)^{\lam+1}dy \right)^{(2\lam+1)/(\lam+1)}}.
\nonumber
\end{equation}
This reduces to the ``normalized estimating equation'' if and only if the integral $\int \phi_{j-1}(y;\theta)^{\lam+1}dy$ does not depend on $j$; see \cite[Eq.(1.2)]{Fuj13}. 
That is, the heterogeneity of data makes the implication of normalizing the (quasi-)score function vague.

The inequality \eqref{hm:intro.ldp-1} is equivalent to the logarithmic variant:
\begin{equation}
\frac{1}{\lam(\lam+1)}\log\bigg(\int g^{\lam+1}\bigg) 
- \left\{ \frac{1}{\lam}\log\bigg(\int f^{\lam}g\bigg) - \frac{1}{\lam+1}\log\bigg(\int f^{\lam+1}\bigg) \right\}
\ge 0.
\label{hm:intro.ldp-3}
\end{equation}
This form is seemingly different from \eqref{hm:intro.ldp-5} and would suggest estimating $\tz$ by maximizing 
\begin{equation}\label{hm:def_log-dp.div}
\mbbh_{\log,n}(\theta;\lam) :=
\frac{1}{\lam}\log\bigg( \sumj \phi_j(\theta)^\lam \bigg)
- \frac{1}{\lam+1}\log\bigg(\sumj \int \phi_{j-1}(y;\theta)^{\lam+1}dy\bigg).
\nn
\end{equation}
In the regression context, this type of divergence was considered in \cite{KawFuj17}, while the case of \eqref{hm:intro.ldp-5} was considered earlier by \cite{FujEgu08}.

The partial derivatives of the random functions $\mbbh^{\flat}_n(\theta;\lam)$ and $\mbbh_{\log,n}(\theta;\lam)$ can be respectively written as
\begin{align}
\p_\theta \mbbh^\flat_{n}(\theta;\lam) &= \sumj m_{n,j}(\theta),
\nn\\
\p_\theta \mbbh_{\log,n}(\theta;\lam) &= 
\left\{\sumj \left( \int \phi_{j-1}(y;\theta)^{\lam+1}dy \right)^{\lam/(\lam+1)} \right\}^{-2}
\sumj m_{\log,n,j}(\theta)
\nonumber
\end{align}
for suitable $m_{n,j}(\theta)$ and $m_{\log,n,j}(\theta)$.
Assume temporarily that \eqref{hm:intro.ldp-2} holds with $\mathsf{R}_{j-1}(\theta)= 0$ and that in the identity we can pass the partial differentiation $\p_{\theta}$ under the integral sign so that \eqref{hm:intro.dp-1} holds with $\mathsf{R}_{j-1}(\theta)= 0$; in particular, this can be the case when the statistical model contains the true data-generating distribution.
Then, by direct computations, we can observe the following.
\begin{itemize}
\item On the one hand, we have $\E_\theta^{j-1}[m_{n,j}(\theta)]=0$ if and only if
\begin{align}
\E_\theta^{j-1}[\phi_j(\theta)^\lambda\psi_j(\theta)] = \frac{1}{\lam+1}
\p_\theta \E_\theta^{j-1}[\phi_j(\theta)^\lambda].
\nonumber
\end{align}
This is the case as we have just temporarily assumed; in our model, this holds approximately in the high-frequency regime.

\item On the other hand, however, we have $\E_\theta^{j-1}[m_{\log,n,j}(\theta)]=0$ if and only if the following identity holds:
\begin{align}
& \left(\sumj \p_\theta \int \phi_{j-1}(y;\theta)^{\lam+1}dy \right)
\left(\sumj \left( \int \phi_{j-1}(y;\theta)^{\lam+1}dy \right)^{\lam/(\lam+1)} \right)
%& \sumj \p_\theta \int \phi_{j-1}(y;\theta)^{\lam+1}dy \, \times \, 
%\sumj \left( \int \phi_{j-1}(y;\theta)^{\lam+1}dy \right)^{\lam/(\lam+1)}
\nn\\
&= \left(\sumj \int \phi_{j-1}(y;\theta)^{\lam+1}dy \right)
\nn\\
&{}\qquad \times
\Bigg\{\sumj \left( \int \phi_{j-1}(y;\theta)^{\lam+1}dy \right)^{\lam/(\lam+1)} 
\p_\theta \log\left(\int \phi_{j-1}(y;\theta)^{\lam+1}dy\right)
\Bigg\}.
\nonumber
\end{align}
This does not hold in general, while it does if the model is homogeneous in the sense that the integral $\int \phi_{j-1}(y;\theta)^{\lam+1}dy$ does not depend on $j$.
%, say $\mathsf{b}(\theta)$, and if $\p_\theta \mathsf{b}(\theta)\ne 0$.
\end{itemize}
It follows from the above observation that $\mbbh^{\flat}_n(\theta;\lam)$ gives rise to an approximate martingale estimation function, hence so does $\mbbh_n(\theta;\lam)$, while $\mbbh_{\log,n}(\theta;\lam)$ does not. 
%; in many regression and time series analyses where no approximation is used (differently from the Euler scheme in our setting), the martingale property should not be approximate but exact. 
Therefore, from a theoretical point of view, we should use $\mbbh_n(\theta;\lam)$ rather than $\mbbh_{\log,n}(\theta;\lam)$.

The previous study \cite{KawFuj23} compared the two versions of the H\"{o}lder-based GQLFs and proved the robustness under heterogeneous heavy contamination in the context of $\gam$-divergence-based regression introduced in \cite{FujEgu08}.
Our observation in the last paragraph demonstrates that the martingale property of the estimating functions (the quasi-score equation) can also explain this point.
\end{rem}

%%%%%
\subsection{Main result}
\label{hm:sec_main.results}

We will state the asymptotic mixed normality of $\tes(\lam)$ defined by \eqref{hm:def_tes.beta} or \eqref{hm:def_tes.gam}:
\begin{equation}
\hat{u}_n(\lambda) :=
\sqrt{n} (\tes(\lambda) - \tz) %= \Gam_0(\lambda)^{-1}\D_n(\lambda) + o_p(1) 
%\cil MN_p\left(0,\Gam_0^{-1}\Sig_0\Gam_0^{-1}\right)
\cil MN_p\left(0,V_0\right),
\label{hm:AMN_pre}
\end{equation}
where the symbol $MN_p(0,V_0)$ denotes the random covariance mixture of the $N_p(0,I_p)$ distribution for an $\mcf$-measurable non-negative definite random covariance matrix $V_0=V_0(\omega)$, possibly depending on $\lam$ when $\lam>0$ is fixed. Specifically, the random variable $\Phi\sim MN_p(0,V_0(\omega))$ is defined on an extended probability space, say $(\overline{\Omega},\overline{\mcf},\overline{\pr})$,  and is characterized by
\begin{equation}
\overline{\E}\left[\exp(i \Phi[u])\right] = \E\left[\exp\left(-\frac12 V_0 [u^{\otimes 2}]\right)\right],\qquad u\in\mbbr^p.
\nonumber
\end{equation}
The specific form of $V_0$ will be given shortly; it takes different forms for \eqref{hm:def_tes.beta} and \eqref{hm:def_tes.gam}.
% The random matrix $V_0$ will be given through the Riemann integrals of the form $T^{-1}\int_0^T g(X_s,\tz;\lam)dt$ or its (a.s.) limit $\lim_{n}T^{-1}\int_0^T g(X_s,\tz;\lam_n)dt$ according as Assumption \ref{hm:A_lam}(1) or \ref{hm:A_lam}(2), respectively, 
% where $g(x,\theta;\lam)$ is a sufficiently smooth function.

We introduce further notation. 
For both density-power and H\"{o}lder-based GQLFs, written as $\mbbh_n(\theta;\lam)$, we introduce the following notation:
\begin{align}
% \mbby_{n}(\theta;\lambda) &:= \frac{1}{n}\left(\mbbh_{n}(\theta;\lambda) - \mbbh_{n}(\tz;\lambda)\right),
% \nn\\
\D_n(\lambda) &:= \frac{1}{\sqrt{n}} \p_\theta \mbbh_n(\tz;\lambda),
% \qquad \D_n(\lambda):=\D_n(\tz;\lambda),
\label{hm:def_Del.n}\\
\Gam_n(\lambda) &:= - \frac1n \p_{\theta}^{2}\mbbh_{n}(\tz;\lambda).
% \qquad \Gam_n(\lambda):=\Gam_n(\tz;\lambda).
\label{hm:def_Gam.n}
\end{align}
These correspond respectively to the quasi-score and quasi-observed information matrix associated with the robustified GQLFs.

With a slight abuse of notation, we will write
\begin{equation}\label{hm:gen.notation_f}
    \mathsf{f}_{j-1}^\star(\theta) = \mathsf{f}(\lx_{t_{j-1}},\theta),
    \qquad \mathsf{f}_{t}^\star(\theta) = \mathsf{f}(\lx_{t},\theta),
    \qquad \mathsf{f}_{t}^\star = \mathsf{f}(\lx_{t},\tz)
\end{equation}
for any measurable function $\mathsf{f}$ defined on $\mbbr^{d'}\times\overline{\Theta}$, which may depend on $\lam$.
Let %$\Xi^\star_{t,k}:={S^\star_t}^{-1} (\p_{\theta_k}S^\star_t)$. 
\begin{equation}
    \Xi^\star_{t,k}:={S^\star_t}^{-1} (\p_{\theta_k}S^\star_t),\qquad k=1,\dots,p,
\end{equation}
and let $\Gam_0(\lambda)=(\Gam_{0,kl}(\lambda))_{k,l=1}^{p}$ and $\Sig_0(\lambda)=(\Sig_{0,kl}(\lambda))_{k,l=1}^{p}$ denote the random matrices in $\mbbr^p\otimes \mbbr^p$ given by:
\begin{align}
    \Gam_{0,kl}(\lam) 
    &= \left(\frac{K_{\lam,d}}{\lam+1}\right) \frac{1}{2T}\int_0^T {\mathsf{d}^\star_t}^{-\lam/2} \bigg\{ \trace\left(\Xi^\star_{t,k}\,\Xi^\star_{t,l}\right) 
    \nn\\
    &{}\qquad + \frac{\lam^2}{2} 
    \trace\left(\Xi^\star_{t,k} \right)
    \trace\left(\Xi^\star_{t,l} \right)
    \bigg\} dt,
    \label{hm:Gam0_dp}\\
    \Sig_{0,kl}(\lam) &= \frac{K_{2\lam,d}}{2\lam+1} \frac{1}{2T} \int_0^T 
    {\mathsf{d}^\star_t}^{-\lam} 
    \trace\left(\Xi^\star_{t,k}\,\Xi^\star_{t,l}\right) dt 
    \nn\\
    &{}\quad + \frac{\ep'(\lam)}{4}
    \frac1T \int_0^T {\mathsf{d}^\star_t}^{-\lam} 
    \trace\left(\Xi^\star_{t,k} \right)
    \trace\left(\Xi^\star_{t,l} \right)
    dt
    \label{hm:Sig0_dp}
\end{align}
for the density-power case, and
\begin{align}
    \Gam_{0,kl}(\lam) 
    &= 
    \left(\frac{K_{\lam,d}}{\lam+1}\right) 
    \frac{1}{2T}\int_0^T {\mathsf{d}^\star_t}^{-\frac{\lam}{2(\lam+1)}} \trace\left(\Xi^\star_{t,k}\,\Xi^\star_{t,l}\right) dt,
    \label{hm:Gam0_ldp}\\
    \Sig_{0,kl}(\lam) &= \frac{K_{2\lam,d}}{2\lam+1} \frac{1}{2T} \int_0^T 
    {\mathsf{d}^\star_t}^{-\frac{\lam}{\lam+1}} 
    \trace\left(\Xi^\star_{t,k}\,\Xi^\star_{t,l}\right) dt 
    \nn\\
    &{}\quad + \ep''(\lam) 
    \frac1T \int_0^T {\mathsf{d}^\star_t}^{-\frac{\lam}{\lam+1}} 
    \trace\left(\Xi^\star_{t,k} \right)
    \trace\left(\Xi^\star_{t,l} \right)
    dt
    \label{hm:Sig0_ldp}
\end{align}
for the H\"{o}lder-based case, 
where
\begin{align}
    \ve'(\lam) &:= \left(\frac{1}{2(\lam+1)} + 2\lam -1\right)K_{2\lam,d} - \lam^2 K_{\lam,d}^2,
    \nn\\
    \ve''(\lam) &:= \frac14 \left(\frac{1}{2\lam+1} - \frac{1}{(\lam+1)^2} \right) K_{2\lam,d}^2.
\end{align}
Obviously, $\lim_{\lam\downarrow 0}\ve'(\lam) = \lim_{\lam\downarrow 0}\ve''(\lam) =0$.
Finally, we define $\mci(\tz)=(\mci_{kl}(\tz))_{k,l=1}^{p}$ by
\begin{equation}\label{hm:def_FI.lim}
    \mci_{kl}(\tz) = \frac{1}{2\,T} \int_0^T \text{trace}\left( (S^{-1} (\p_{\theta_k}S) S^{-1} (\p_{\theta_l} S))_t \right) dt,
\end{equation}
which corresponds to the Fisher-information matrix (see \cite{UchYos13} and the references therein).

The following theorem is the main result of this paper.
The proof is given in Appendices \ref{hm:sec_aux.asymp} and \ref{hm:sec_main.results.proof}.

\begin{thm}
\label{hm:thm_AMN}
Suppose that Assumptions 
% \ref{hm:A_diff.coeff}, \ref{hm:A_J}, \ref{hm:A_spike}, \ref{hm:A_drif.coeff&X}, \ref{hm:A_iden}, and \ref{hm:A_lam} 
\ref{hm:A_diff.coeff} to \ref{hm:A_lam} 
hold, and let $\eta \sim N_p(0,I_p)$ being independent of $\mcf$.
Then, we have the following.
\begin{enumerate}
    \item For the density-power case:
    \begin{enumerate}
        \item Under Assumption \ref{hm:A_lam}(1), if $\Gam_0(\lam)$ defined by \eqref{hm:Gam0_dp} is a.s. positive definite for $\lam\in(0,\overline{\lam}]$, then
        \begin{align}
        \hat{u}_n(\lambda) 
        %=\sqrt{n}(\tes(\lambda) - \tz) 
        &= \Gam_{0}(\lambda)^{-1} \D_{n}(\lambda) + o_p(1) \nn\\
        &\cil \Gam_{0}(\lambda)^{-1} \Sigma_{0}(\lambda)^{1/2}\eta 
        \nn\\
        &\sim MN_{p}\left(0,\Gam_0(\lambda)^{-1}\Sig_0(\lambda)\Gam_0(\lambda)^{-1}\right)
        \label{hm:AMN(1)}
        \end{align}
        with $\Sig_0(\lam)$ given by \eqref{hm:Sig0_dp};
        
        \item Under Assumption \ref{hm:A_lam}(2),
        \begin{align}
        \hat{u}_n(\lambda) 
        %=\sqrt{n}(\tes(\lambda) - \tz) 
        &= \Gam_{0}(\lambda)^{-1} \D_{n}(\lambda) + o_p(1) \nn\\
        &\cil \mci(\tz)^{-1/2}\eta \sim MN_{p}\left(0,\mci(\tz)^{-1}\right).
        \label{hm:AMN(2)}
        \end{align}
    \end{enumerate}
    
    \item For the H\"{o}lder-based case:
    \begin{enumerate}
        \item Under Assumption \ref{hm:A_lam}(1), if $\Gam_0(\lam)$ defined by \eqref{hm:Gam0_ldp} is a.s. positive definite for $\lam\in(0,\overline{\lam}]$, then \eqref{hm:AMN(1)} holds with $\Sig_0(\lam)$ given by \eqref{hm:Sig0_ldp};
        \item Under Assumption \ref{hm:A_lam}(2), \eqref{hm:AMN(2)} holds.        
    \end{enumerate}
\end{enumerate}
% ...
% \begin{enumerate}
%     \item Under Assumption \ref{hm:A_lam}(1), 
%     if $\Gam_0(\lam)$ defined by \eqref{hm:Gam0_dp} and \eqref{hm:Gam0_ldp} are both a.s. positive definite for $\lam\in(0,\overline{\lam}]$, then
%     \begin{align}
%     \hat{u}_n(\lambda) 
%     %=\sqrt{n}(\tes(\lambda) - \tz) 
%     &= \Gam_{0}(\lambda)^{-1} \D_{n}(\lambda) + o_p(1) \nn\\
%     &\cil \Gam_{0}(\lambda)^{-1} \Sigma_{0}(\lambda)^{1/2}\eta \sim MN_{p}\left(0,\Gam_0(\lambda)^{-1}\Sig_0(\lambda)\Gam_0(\lambda)^{-1}\right),
%     \end{align}
%     where $\Gam_0(\lam)$ and $\Sig_0(\lam)$ are respectively given by \eqref{hm:Gam0_dp} and \eqref{hm:Sig0_dp} for the density-power GQMLE, and \eqref{hm:Gam0_ldp} and \eqref{hm:Sig0_ldp} for the H\"{o}lder-based GQMLE.
    
%     \item Under Assumption \ref{hm:A_lam}(2),
%     if $\mci(\tz)$ given by \eqref{hm:def_FI.lim} is a.s. positive definite, then
%     \begin{align}
%     \hat{u}_n(\lambda) 
%     %=\sqrt{n}(\tes(\lambda) - \tz) 
%     &= \Gam_{n}^{-1}(\lambda)\D_{n}(\lambda) + o_p(1) \nn\\
%     &\cil \mci(\tz)^{-1/2}\eta \sim MN_{p}\left(0,\mci(\tz)^{-1}\right)
%     \end{align}
%     for both density-power and H\"{o}lder-based GQMLEs.
% \end{enumerate}
\end{thm}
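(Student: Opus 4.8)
The plan is to follow the standard three-step route for $M$-estimators in the non-ergodic high-frequency setting: first establish consistency $\tes(\lam)\cip\tz$, then derive a first-order stochastic expansion reducing $\hat u_n(\lam)$ to $\Gam_0(\lam)^{-1}\D_n(\lam)$, and finally invoke a martingale central limit theorem delivering the $\mcf$-stable mixed-normal limit. The engine throughout is the set of auxiliary results promised in Section \ref{hm:sec_aux.asymp}, in particular the conditional-mean expansions \eqref{hm:intro.dp-1} and \eqref{hm:intro.ldp-2} together with the negligibility of the remainders $\mathsf{R}_{j-1}$. These identify the leading terms that the GQLFs \eqref{hm:def_dp-H-0} and \eqref{hm:def_ldp-H-0} are purpose-built to subtract, so that $\p_\theta\mbbh_n(\tz;\lam)$ is, up to controlled error, a genuine $(\mcf_{t_j})$-martingale; this approximate martingale property (cf.\ Remark \ref{hm:rem_ns.eq}) is what makes both the centering and the variance computation tractable, and it is the reason we work with $\mbbh_n$ rather than its logarithmic analogue.

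For consistency I would prove a uniform law of large numbers $n^{-1}(\mbbh_n(\theta;\lam)-\mbbh_n(\tz;\lam))\cip\mbby_0(\theta;\lam)$ on $\overline\Theta$, where $\mbby_0$ is the population divergence functional of Section \ref{hm:sec_robust.GQLFs}; Assumption \ref{hm:A_iden} then forces $\mbby_0$ to have its unique maximum at $\tz$, and the continuity of the argmax yields $\tes(\lam)\cip\tz$. The decisive device is the splitting $\Omega=G_j\cup G_j^c$ from \eqref{hm:def_Gj}: on $G_j$ the increment \eqref{hm:DY_Gj} is that of the clean continuous model, so the usual Gaussian-moment evaluations using $\nY_j\approx\sig_{j-1}Z_j$ (with $Z_j$ from \eqref{hm:Z_def}) apply verbatim, while on $G_j^c$ the tapering factor $\phi(\cdot)^\lam$ renders every integrand essentially bounded and the expected number of bad intervals $\sum_j\pr^{j-1}[G_j^c]$ is $O(nh^\kappa)$ by \eqref{hm:A_J-1} and \eqref{hm:A_spike-1}.

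For the expansion and the CLT, a Taylor development of $\p_\theta\mbbh_n(\tes(\lam);\lam)=0$ about $\tz$ gives $\hat u_n(\lam)=\Gam_n(\bar\theta_n;\lam)^{-1}\D_n(\lam)$ for an intermediate point $\bar\theta_n\cip\tz$; a uniform LLN for the Hessian combined with consistency yields $\Gam_n(\bar\theta_n;\lam)\cip\Gam_0(\lam)$ with the explicit forms \eqref{hm:Gam0_dp} and \eqref{hm:Gam0_ldp} obtained by Gaussian-moment evaluation of the leading terms. Writing $\D_n(\lam)=n^{-1/2}\sumj\chi_{n,j}$ with $\chi_{n,j}$ the centered tapered score increments, I would verify that $\{\chi_{n,j}\}$ is (modulo negligible remainders) a martingale-difference array and apply a martingale CLT for stable convergence: the conditional-variance condition $n^{-1}\sumj\E^{j-1}[\chi_{n,j}^{\otimes 2}]\cip\Sig_0(\lam)$ (another Gaussian-moment computation producing \eqref{hm:Sig0_dp}/\eqref{hm:Sig0_ldp}), a conditional-Lyapunov condition $n^{-2}\sumj\E^{j-1}[|\chi_{n,j}|^4]\cip 0$, and the asymptotic-orthogonality (nesting) conditions against $w$ and against bounded martingales orthogonal to $w$ that secure $\mcf$-stable convergence to $MN_p(0,\Sig_0(\lam))$; combining with $\Gam_n\cip\Gam_0(\lam)$ gives the stated limit.

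The hard part will be controlling the contamination uniformly, and this is exactly where Assumption \ref{hm:A_lam} enters. On each bad interval the tapered score is bounded but carries factors of order $\lam_n^{-1}$ from the $1/\lam$ normalization in \eqref{hm:def_dp-H-0}/\eqref{hm:def_ldp-H-0}, so that the total contamination contribution to $\D_n(\lam)$, arising through the conditional-mean remainders on $G_j^c$, is of worst-case order $\sqrt{n}\,h^\kappa/\lam_n$; this is precisely the quantity that \eqref{hm:lam.condition-2} forces to zero. For fixed $\lam$ (Assumption \ref{hm:A_lam}(1)) this is $O(n^{1/2-\kappa})\to 0$ since $\kappa>1/2$, and the limit retains the $\lam$-dependent sandwich covariance $\Gam_0(\lam)^{-1}\Sig_0(\lam)\Gam_0(\lam)^{-1}$, giving \eqref{hm:AMN(1)}. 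For $\lam_n\to 0$ (Assumption \ref{hm:A_lam}(2)) the same bound still kills the contamination, while the continuity relations $\lim_{\lam\downarrow 0}\Gam_0(\lam)=\lim_{\lam\downarrow 0}\Sig_0(\lam)=\mci(\tz)$—using $\ve'(\lam),\ve''(\lam)\to 0$ and $K_{\lam,d}\to 1$—collapse the sandwich to the clean $\mci(\tz)^{-1}$, yielding \eqref{hm:AMN(2)}. The two regimes are handled in one stroke because the expansions and moment bounds of Section \ref{hm:sec_aux.asymp} are designed to be uniform over $\lam\in(0,\overline\lam]$.
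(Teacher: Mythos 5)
Your proposal follows essentially the same route as the paper: consistency via a uniform LLN and the divergence/H\"older identifiability argument, a Taylor expansion reducing $\hat{u}_n(\lam)$ to $\Gam_0(\lam)^{-1}\D_n(\lam)$, Jacod's stable martingale CLT with the conditional-variance, Lyapunov, and orthogonality conditions, and contamination control on the good events $G_j$ via the bound $\sqrt{n}\,h^\kappa/\lam_n\to 0$, with the $\lam\downarrow 0$ continuity of $\Gam_0$ and $\Sig_0$ collapsing the sandwich to $\mci(\tz)^{-1}$. The only refinement worth noting is that, since the limit $\mbby_0(\cdot;\lam)$ is a random function in this non-ergodic setting, the consistency step needs the quantitative separation bound $\mbby_0(\theta;\lam)\le -\chi_0(\lam)|\theta-\tz|^2$ (the paper's Lemma \ref{hm:lem_consistency}) rather than a plain argmax-continuity argument.
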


It is straightforward to construct consistent estimators of $\Gam_0(\lam)$, $\Sig_0(\lam)$, and $\mci(\tz)$ through the following simple fact:
\begin{equation}
    \sup_{\lam\in(0,\overline{\lam}]} \left|\frac1n \sumj \mathsf{g}(X_s,\tes(\lam);\lam) - \frac1T \int_0^T \mathsf{g}(X_s,\tz;\lam)dt\right| \cip 0 \label{se:conti.map}
\end{equation}
for any measurable function $\mathsf{g}$ smooth enough. This readily provides us with a practical recipe for constructing an approximate confidence set.

In Theorem \ref{hm:thm_AMN}(1), we did not guarantee positive definiteness of $\Sig_0(\lambda)$. Still, it could be the case for $\lam>0$ small enough as soon as $\mci(\tz)$ is a.s. positive definite, thanks to the (right-)continuity of $\lam \mapsto (\Gam_0(\lam), \Sig_0(\lam))$.

We end with the following remark.

\begin{rem}\normalfont
\label{hm:rem_G0j}
The Gaussian tapering through $\vp_j(\theta)^\lam$ is simple enough and leads to the intuitively interpretable ``divergence-based'' framework, providing a single-parameter-tuning estimation procedure robust against ``non-continuous'' transition.
Roughly speaking, the essence of the proofs is that for each $\mbbi_j$ we can construct a good event $G_j$,
\begin{itemize}
\item on which the process $Y$ obeys the (ideal) continuous semimartingale, and
\item whose probability is close to $1$.
\end{itemize}
%\tcb{
%At the same time, we managed the quasi-likelihood and its derivatives $\p_\theta^k \mbbh_n(\theta;\lambda)$ ($k\ge 0$) on $G_j^c$ through their upper bound of the summands (recall \eqref{hm:deriv.estimate}) and the smallness of the probability of $G_j^c$.
%}
Since our proofs do not utilize any specific structure of the jump and spike components, the proposed estimation strategy should be robust against other types of non-continuous transitions and can be applied in analogous ways to many different types of random dynamical systems, such as diffusions with small noise and ergodic diffusions contaminated by jumps and spike noises.
\end{rem}

%%%%%
%%%%%
\section{Numerical experiments}
\label{sec_simulations}

% \tcm{Notation to be unified at last.}
In this section, we present simulation results to observe the finite-sample performance of the density-power and the H\"{o}lder-based GQMLEs.
We use the {\tt yuima} package in R (see \cite{YUIMA14}) to generate data and compute the GQMLE.
All Monte Carlo trials are based on 1000 independent sample paths, and the simulations are done for $n=1000$ and $5000$ with $T=1$.
\serev{For $\lambda=0.2$, we have $\sqrt{n}h/\lambda\fallingdotseq0.16$ for $n=1000$ and $\sqrt{n}h/\lambda\fallingdotseq0.07$ for $n=5000$, so that we may regard \eqref{hm:lam.condition-2} of Assumption \ref{hm:A_lam} with $\kappa=1$ approximately holds.}

%%%%%
\subsection{Time-inhomogeneous Wiener process}
\label{se:simu1}

% Let $(\lx_{t_{j}},\ly_{t_{j}})_{j=0}^{n}$ be the random variable with $t_{j}=j/n$ and the number of data $n$.
We suppose that 
% $(\lx_{t_{j}},\ly_{t_{j}})_{j=0}^{n}$ is obtained from 
\begin{align*}
\lx_{t_{j}}&=(\lx_{1,t_{j}},\lx_{2,t_{j}},\lx_{3,t_{j}})^{\top}=\left(\cos\left(\frac{2j\pi}{n}\right),\sin\left(\frac{2j\pi}{n}\right),\cos\left(\frac{4j\pi}{n}\right)\right)^{\top}.
\end{align*}
and that $(\ly_t)_{t\le 1}$ is a solution to the stochastic regression model
\begin{align*}
d\ly_{t}&=\exp\left\{\frac{1}{2}(-2\lx_{1,t}+3\lx_{2,t})\right\}dw_{t}+s\;dJ_{t}, \quad \ly_{0}=0, \quad t\in[0,1],
\end{align*}
where $s$ takes $0$ or $1$, and $J$ is a compound Poisson process with intensity $q$.
% For the observed data $(X_{t_{j}},Y_{t_{j}})_{j=0}^{n}$ based on $(\lx_{t_{j}},\ly_{t_{j}})_{j=0}^{n}$, 
We consider the following model for the estimation:
\begin{align*}
d\ly_{t}=\exp\left\{\frac{1}{2}(\theta_{1}\lx_{1,t}+\theta_{2} \lx_{2,t}+\theta_{3}\lx_{3,t})\right\}dw_{t}, \qquad t\in[0,1].
\end{align*}
Then, the true parameter $\tz=(\theta_{1,0},\theta_{2,0},\theta_{3,0})^{\top}=(-2,3,0)^{\top}$.
We set the initial value, lower bound, and upper bound in numerical optimization $0$, $-10$, and $10$, respectively.

%%%%%
\subsubsection{With some spikes} \label{se:simu12}

We set $s=0$ and $X_{t_{j}}=\lx_{t_{j}}$, and consider that the sample $(\ly_{t_{j}})_{j=0}^{n}$ is the original data.
Moreover, we deal with the situation, which is similar to the settings of \cite{LeeSon13,Son20}, that the observed random variable $(Y_{t_{j}})_{j=0}^{n}$ includes the outliers $(Y_{c,t_{j}})\sim^{\text{i.i.d.}}\serev{\mathcal{N}}(0,\sigma^{2})$ and that $Y_{t_{j}}$ is given by the scheme $Y_{t_{j}}=\ly_{t_{j}}+p_{j}Y_{c,t_{j}}$, where $p_{0},p_{1},\ldots,p_{n}$ are random variables that independently follow the Bernoulli distribution with a success probability of $\mathfrak{p}$.
We assume that $(p_{j})$, $(\ly_{t_{j}})$, and $(Y_{c,t_{j}})$ are independent, and the simulations are done for $\sigma^{2}=1,3$, and $\mathfrak{p}=0.01,0.05$.
Figure \ref{pathplot1} shows one of 1000 sample paths for $\sigma^{2}=1, \mathfrak{p}=0.01$, and $n=5000$.

Tables \ref{esti13} and \ref{esti14} summarize the estimation results using GQMLE, density-power GQMLE, and H\"{o}lder-based GQMLE \serev{for $\mathfrak{p}= 0.01$ and $\mathfrak{p}= 0.05$ respectively}.
In the original data, all estimators are close to the true parameter values.
%In the two types of arranged data(spike data), the performances of density-power and H\"{o}lder-based GQMLEs are better than that of GQMLE.
In the two types of arranged data(spike data), the performance of GQMLE is different from the case of original data, while the performances of density-power and H\"{o}lder-based GQMLEs are similar to those in the original data.
Comparing the calculation times of density-power and H\"{o}lder-based GQMLEs, we observed that the H\"{o}lder-based GQMLE could be computed faster than the density-power GQMLE.

Figures \ref{estplot13} and \ref{holestplot13} show the means and standard deviations of the density-power and H\"{o}lder-based GQMLEs in each $\lambda$, and both density-power and H\"{o}lder-based GQMLEs have similar tendencies.
We can observe that estimators $\hat{\theta}_{1,n}(\lambda)$, $\hat{\theta}_{2,n}(\lambda)$, and $\hat{\theta}_{3,n}(\lambda)$ seem to perform best when $\lambda=0.2$.
We can also observe that there is a difference between the true value and the estimators of $\theta_{3}$ if $\lambda$ becomes too large.
Moreover, the standard deviations of the estimators increase as $\lambda$ increases.

%In Figures \ref{plot12}--\ref{holestplot14}, the properties of density-power and H\"{o}lder-based GQMLEs are shown. 
For the density-power and H\"{o}lder-based GQMLEs, we define
\begin{align}
u_{i,n}(\lambda)
&=\left\{\left(\Gamma_{n}(\hat{\theta}_{n}(\lambda);\lambda)^{-1}\Sigma_{n}(\hat{\theta}_{n}(\lambda);\lambda)\Gamma_{n}(\hat{\theta}_{n}(\lambda);\lambda)^{-1}\right)_{ii}\right\}^{-1/2} \\
&\qquad \times \sqrt{n}(\hat{\theta}_{i,n}(\lambda)-\theta_{i,0}).
\label{se: asym.norm}
\end{align}
Taking \eqref{se:conti.map} into account, $\Gamma_{n}(\hat{\theta}_{n}(\lambda);\lambda)$ and $\Sigma_{n}(\hat{\theta}_{n}(\lambda);\lambda)$ are constructed so as to converge in probability to $\Gamma_{0}(\lambda)$ and $\Sigma_{0}(\lambda)$ given in \eqref{hm:Gam0_dp}--\eqref{hm:Sig0_ldp}, respectively. 
% where $\Sigma_{n}(\hat{\theta}_{n}(\lambda),\lambda)$ is the estimator of $\Sigma_{0}(\lambda)$, and the convergence $\Sigma_{n}(\hat{\theta}_{n}(\lambda),\lambda)\cip\Sigma_{0}(\lambda)$ is satisfied.
Figures \ref{plot12} and \ref{holplot12} give the histograms of $u_{1,n}(\lambda)$, $u_{2,n}(\lambda)$, and $u_{3,n}(\lambda)$ in the case of $\lambda=0.2$.
Figure \ref{plot12} is based on the 1000th sample data and density-power GQMLE, and Figure \ref{holplot12} is based on the 1000th sample data and H\"{o}lder-based GQMLE.
From these figures, we can see the appropriate performance of the theoretical claims in Theorem \ref{hm:thm_AMN}.

%Figures \ref{estplot14} and \ref{holestplot14} present the 95\% confidence intervals in each $\lambda$ for density-power and H\"{o}lder-based GQMLEs, respectively, using one of 1000 sample data sets.
Tables \ref{ciratio12} and \ref{holciratio12} show the empirical frequencies with which the true values of $\theta_{1}$, $\theta_{2}$, and $\theta_{3}$ are included in the 95\% confidence intervals in each $\lambda$ for density-power and H\"{o}lder-based GQMLEs, respectively.
% If $\lambda$ is large enough, the frequencies shown in Tables \ref{ciratio12} and \ref{holciratio12} are around 0.93 and 0.95, respectively.
The estimated frequencies were generally accurate for $\lam\ge 0.2$, while they fell below $0.95$ when $\lam=0.1$.

%%%%

\begin{figure}[t]
\begin{tabular}{c}
\includegraphics[scale=0.35]{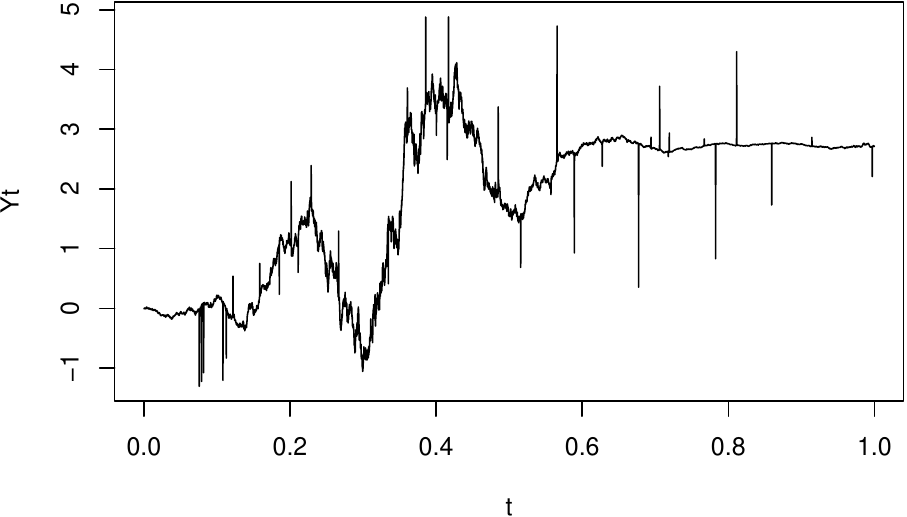}
\end{tabular}
\caption{One of 1000 sample paths in Section \ref{se:simu12} ($\mathfrak{p}=0.01$, $n=5000$).}
\label{pathplot1}
\end{figure}

%%%

%\begin{landscape}
\begin{table}[t]
\begin{center}
\caption{GQMLE, density-power GQMLE, and H\"{o}lder-based GQMLE in Section \ref{se:simu12} ($\theta_{0}=(-2,3,0)^{\top}$, $\mathfrak{p}=0.01$). ``time'' shows the mean calculation time in the cases where $n=5000$.}
\scalebox{0.7}[0.7]{
\serev{
\begin{tabular}{r r | r r r | r r r | r r r} \hline
& & \multicolumn{3}{c|}{} & \multicolumn{3}{c|}{} & \multicolumn{3}{c}{} \\[-3mm]
& & \multicolumn{3}{l|}{original} & \multicolumn{3}{l|}{spike: $\sigma^{2}=1$} & \multicolumn{3}{l}{spike: $\sigma^{2}=3$} \\[1mm] \hline
& & \multicolumn{3}{c|}{} & \multicolumn{3}{c|}{} & \multicolumn{3}{c}{} \\[-3mm]
\multicolumn{1}{l}{GQMLE} & & $\hat{\theta}_{1,n}$ & $\hat{\theta}_{2,n}$ & $\hat{\theta}_{3,n}$ & $\hat{\theta}_{1,n}$ & $\hat{\theta}_{2,n}$ & $\hat{\theta}_{3,n}$ & $\hat{\theta}_{1,n}$ & $\hat{\theta}_{2,n}$ & $\hat{\theta}_{3,n}$ \\ \hline
& & \multicolumn{3}{c|}{} & \multicolumn{3}{c|}{} & \multicolumn{3}{c}{} \\[-3mm]
$n=1000$ & mean & -2.0101 & 2.9944 & -0.0005 & -0.6385 & 0.9202 & -0.0824 & -0.4788 & 0.6723 & -0.0452 \\[1mm]
& s.d. & 0.0646 & 0.0627 & 0.0602 & 0.9904 & 1.0060 & 0.9714 & 1.1704 & 1.1684 & 1.1595 \\[1mm]
$n=5000$ & mean & -2.0013 & 2.9981 & 0.0015 & -0.1243 & 0.1429 & -0.0353 & -0.0712 & 0.0598 & -0.0124 \\[1mm]
& s.d. & 0.0283 & 0.0281 & 0.0282 & 0.3682 & 0.3591 & 0.3768 & 0.3852 & 0.3820 & 0.3979 \\[1mm] 
& & \multicolumn{3}{r|}{(time: 0.2651)} & \multicolumn{3}{r|}{(time: 0.2065)} & \multicolumn{3}{r}{(time: 0.2074)} \\[1mm] \hline
%%%%%
& & \multicolumn{3}{c|}{} & \multicolumn{3}{c|}{} & \multicolumn{3}{c}{} \\[-3mm]
\multicolumn{1}{l}{Density-power:} & $\lambda=0.1$ & $\hat{\theta}_{1,n}(\lambda)$ & $\hat{\theta}_{2,n}(\lambda)$ & $\hat{\theta}_{3,n}(\lambda)$ & $\hat{\theta}_{1,n}(\lambda)$ & $\hat{\theta}_{2,n}(\lambda)$ & $\hat{\theta}_{3,n}(\lambda)$ & $\hat{\theta}_{1,n}(\lambda)$ & $\hat{\theta}_{2,n}(\lambda)$ & $\hat{\theta}_{3,n}(\lambda)$ \\ \hline
& & \multicolumn{3}{c|}{} & \multicolumn{3}{c|}{} & \multicolumn{3}{c}{} \\[-3mm]
$n=1000$ & mean & -1.9917 & 3.0077 & -0.0005 & -2.0122 & 3.0379 & -0.0041 & -2.0067 & 3.0297 & -0.0041 \\[1mm]
& s.d. & 0.0656 & 0.0638 & 0.0613 & 0.0723 & 0.0715 & 0.0689 & 0.0709 & 0.0700 & 0.0672 \\[1mm]
$n=5000$ & mean & -1.9974 & 3.0006 & 0.0016 & -2.0089 & 3.0181 & -0.0021 & -2.0042 & 3.0115 & -0.0013 \\[1mm]
& s.d. & 0.0287 & 0.0284 & 0.0289 & 0.0305 & 0.0306 & 0.0306 & 0.0301 & 0.0300 & 0.0300 \\[1mm] 
& & \multicolumn{3}{r|}{(time: 0.5397)} & \multicolumn{3}{r|}{(time: 0.5334)} & \multicolumn{3}{r}{(time: 0.5327)} \\[1mm] \hline
%%%
& & \multicolumn{3}{c|}{} & \multicolumn{3}{c|}{} & \multicolumn{3}{c}{} \\[-3mm]
& $\lambda=0.5$ & $\hat{\theta}_{1,n}(\lambda)$ & $\hat{\theta}_{2,n}(\lambda)$ & $\hat{\theta}_{3,n}(\lambda)$ & $\hat{\theta}_{1,n}(\lambda)$ & $\hat{\theta}_{2,n}(\lambda)$ & $\hat{\theta}_{3,n}(\lambda)$ & $\hat{\theta}_{1,n}(\lambda)$ & $\hat{\theta}_{2,n}(\lambda)$ & $\hat{\theta}_{3,n}(\lambda)$ \\ \hline
& & \multicolumn{3}{c|}{} & \multicolumn{3}{c|}{} & \multicolumn{3}{c}{} \\[-3mm]
$n=1000$ & mean & -1.9920 & 3.0091 & -0.0007 & -1.9937 & 3.0115 & -0.0003 & -1.9873 & 3.0010 & 0.0007 \\[1mm]
& s.d. & 0.0821 & 0.0825 & 0.0795 & 0.0809 & 0.0813 & 0.0772 & 0.0833 & 0.0841 & 0.0803 \\[1mm]
$n=5000$ & mean & -1.9968 & 2.9999 & 0.0019 & -1.9916 & 2.9920 & 0.0022 & -1.9914 & 2.9917 & 0.0023 \\[1mm]
& s.d. & 0.0360 & 0.0354 & 0.0361 & 0.0361 & 0.0356 & 0.0366 & 0.0360 & 0.0355 & 0.0366 \\[1mm] 
& & \multicolumn{3}{r|}{(time: 0.6450)} & \multicolumn{3}{r|}{(time: 0.6347)} & \multicolumn{3}{r}{(time: 0.6346)} \\[1mm] \hline
%%%
& & \multicolumn{3}{c|}{} & \multicolumn{3}{c|}{} & \multicolumn{3}{c}{} \\[-3mm]
& $\lambda=0.9$ & $\hat{\theta}_{1,n}(\lambda)$ & $\hat{\theta}_{2,n}(\lambda)$ & $\hat{\theta}_{3,n}(\lambda)$ & $\hat{\theta}_{1,n}(\lambda)$ & $\hat{\theta}_{2,n}(\lambda)$ & $\hat{\theta}_{3,n}(\lambda)$ & $\hat{\theta}_{1,n}(\lambda)$ & $\hat{\theta}_{2,n}(\lambda)$ & $\hat{\theta}_{3,n}(\lambda)$ \\ \hline
& & \multicolumn{3}{c|}{} & \multicolumn{3}{c|}{} & \multicolumn{3}{c}{} \\[-3mm]
$n=1000$ & mean & -1.9912 & 3.0110 & -0.0016 & -1.9917 & 3.0107 & 0.0005 & -1.9797 & 2.9881 & 0.0033 \\[1mm]
& s.d. & 0.1035 & 0.1058 & 0.1112 & 0.1013 & 0.1043 & 0.1001 & 0.1053 & 0.1076 & 0.1123 \\[1mm]
$n=5000$ & mean & -1.9966 & 2.9993 & 0.0027 & -1.9845 & 2.9765 & 0.0062 & -1.9845 & 2.9765 & 0.0062 \\[1mm]
& s.d. & 0.0457 & 0.0449 & 0.0489 & 0.0456 & 0.0448 & 0.0497 & 0.0456 & 0.0448 & 0.0496 \\[1mm] 
& & \multicolumn{3}{r|}{(time: 0.6856)} & \multicolumn{3}{r|}{(time: 0.6801)} & \multicolumn{3}{r}{(time: 0.6752)} \\[1mm]  \hline
%%%%%
& & \multicolumn{3}{c|}{} & \multicolumn{3}{c|}{} & \multicolumn{3}{c}{} \\[-3mm]
\multicolumn{1}{l}{H\"{o}lder-based:} & $\lambda=0.1$ & $\hat{\theta}_{1,n}(\lambda)$ & $\hat{\theta}_{2,n}(\lambda)$ & $\hat{\theta}_{3,n}(\lambda)$ & $\hat{\theta}_{1,n}(\lambda)$ & $\hat{\theta}_{2,n}(\lambda)$ & $\hat{\theta}_{3,n}(\lambda)$ & $\hat{\theta}_{1,n}(\lambda)$ & $\hat{\theta}_{2,n}(\lambda)$ & $\hat{\theta}_{3,n}(\lambda)$ \\ \hline
& & \multicolumn{3}{c|}{} & \multicolumn{3}{c|}{} & \multicolumn{3}{c}{} \\[-3mm]
$n=1000$ & mean & -1.9917 & 3.0076 & -0.0004 & -2.0122 & 3.0379 & -0.0041 & -2.0074 & 3.0307 & -0.0042 \\[1mm]
& s.d. & 0.0656 & 0.0637 & 0.0612 & 0.0723 & 0.0715 & 0.0689 & 0.0708 & 0.0700 & 0.0671 \\[1mm]
$n=5000$ & mean & -1.9974 & 3.0006 & 0.0015 & -2.0095 & 3.0190 & -0.0022 & -2.0046 & 3.0121 & -0.0014 \\[1mm]
& s.d. & 0.0286 & 0.0283 & 0.0288 & 0.0305 & 0.0306 & 0.0305 & 0.0301 & 0.0300 & 0.0299 \\[1mm] 
& & \multicolumn{3}{r|}{(time: 0.3214)} & \multicolumn{3}{r|}{(time: 0.3143)} & \multicolumn{3}{r}{(time: 0.3141)} \\[1mm] \hline
%%%
& & \multicolumn{3}{c|}{} & \multicolumn{3}{c|}{} & \multicolumn{3}{c}{} \\[-3mm]
& $\lambda=0.5$ & $\hat{\theta}_{1,n}(\lambda)$ & $\hat{\theta}_{2,n}(\lambda)$ & $\hat{\theta}_{3,n}(\lambda)$ & $\hat{\theta}_{1,n}(\lambda)$ & $\hat{\theta}_{2,n}(\lambda)$ & $\hat{\theta}_{3,n}(\lambda)$ & $\hat{\theta}_{1,n}(\lambda)$ & $\hat{\theta}_{2,n}(\lambda)$ & $\hat{\theta}_{3,n}(\lambda)$ \\ \hline
& & \multicolumn{3}{c|}{} & \multicolumn{3}{c|}{} & \multicolumn{3}{c}{} \\[-3mm]
$n=1000$ & mean & -1.9916 & 3.0082 & 0.0001 & -1.9937 & 3.0115 & -0.0003 & -1.9927 & 3.0100 & 0.0001 \\[1mm]
& s.d. & 0.0797 & 0.0794 & 0.0763 & 0.0809 & 0.0813 & 0.0772 & 0.0806 & 0.0810 & 0.0770 \\[1mm]
$n=5000$ & mean & -1.9967 & 2.9999 & 0.0015 & -1.9974 & 3.0018 & 0.0007 & -1.9970 & 3.0011 & 0.0009 \\[1mm]
& s.d. & 0.0351 & 0.0340 & 0.0346 & 0.0353 & 0.0342 & 0.0351 & 0.0352 & 0.0341 & 0.0351 \\[1mm] 
& & \multicolumn{3}{r|}{(time: 0.3556)} & \multicolumn{3}{r|}{(time: 0.3495)} & \multicolumn{3}{r}{(time: 0.3529)} \\[1mm] \hline
%%%
& & \multicolumn{3}{c|}{} & \multicolumn{3}{c|}{} & \multicolumn{3}{c}{} \\[-3mm]
& $\lambda=0.9$ & $\hat{\theta}_{1,n}(\lambda)$ & $\hat{\theta}_{2,n}(\lambda)$ & $\hat{\theta}_{3,n}(\lambda)$ & $\hat{\theta}_{1,n}(\lambda)$ & $\hat{\theta}_{2,n}(\lambda)$ & $\hat{\theta}_{3,n}(\lambda)$ & $\hat{\theta}_{1,n}(\lambda)$ & $\hat{\theta}_{2,n}(\lambda)$ & $\hat{\theta}_{3,n}(\lambda)$ \\ \hline
& & \multicolumn{3}{c|}{} & \multicolumn{3}{c|}{} & \multicolumn{3}{c}{} \\[-3mm]
$n=1000$ & mean & -1.9908 & 3.0093 & 0.0003 & -1.9917 & 3.0107 & 0.0005 & -1.9913 & 3.0100 & 0.0007 \\[1mm]
& s.d. & 0.0997 & 0.1021 & 0.0989 & 0.1013 & 0.1043 & 0.1001 & 0.1011 & 0.1040 & 0.1000 \\[1mm]
$n=5000$ & mean & -1.9963 & 2.9994 & 0.0019 & -1.9963 & 3.0002 & 0.0014 & -1.9962 & 2.9999 & 0.0015 \\[1mm]
& s.d. & 0.0446 & 0.0430 & 0.0433 & 0.0447 & 0.0430 & 0.0440 & 0.0447 & 0.0430 & 0.0440 \\[1mm] 
& & \multicolumn{3}{r|}{(time: 0.3599)} & \multicolumn{3}{r|}{(time: 0.3529)} & \multicolumn{3}{r}{(time: 0.3530)} \\[1mm] \hline
\end{tabular}
}
}
\label{esti13}
\end{center}
\end{table}
%\end{landscape}

%%%

%\begin{landscape}
\begin{table}[t]
\begin{center}
\caption{GQMLE, density-power GQMLE, and H\"{o}lder-based GQMLE in Section \ref{se:simu12} ($\theta_{0}=(-2,3,0)^{\top}$, $\mathfrak{p}=0.05$). ``time'' shows the mean calculation time in the cases where $n=5000$.}
\scalebox{0.7}[0.7]{
\serev{
\begin{tabular}{r r | r r r | r r r | r r r} \hline
& & \multicolumn{3}{c|}{} & \multicolumn{3}{c|}{} & \multicolumn{3}{c}{} \\[-3mm]
& & \multicolumn{3}{l|}{original} & \multicolumn{3}{l|}{spike: $\sigma^{2}=1$} & \multicolumn{3}{l}{spike: $\sigma^{2}=3$} \\[1mm] \hline
& & \multicolumn{3}{c|}{} & \multicolumn{3}{c|}{} & \multicolumn{3}{c}{} \\[-3mm]
\multicolumn{1}{l}{GQMLE} & & $\hat{\theta}_{1,n}$ & $\hat{\theta}_{2,n}$ & $\hat{\theta}_{3,n}$ & $\hat{\theta}_{1,n}$ & $\hat{\theta}_{2,n}$ & $\hat{\theta}_{3,n}$ & $\hat{\theta}_{1,n}$ & $\hat{\theta}_{2,n}$ & $\hat{\theta}_{3,n}$ \\ \hline
& & \multicolumn{3}{c|}{} & \multicolumn{3}{c|}{} & \multicolumn{3}{c}{} \\[-3mm]
$n=1000$ & mean & -2.0101 & 2.9944 & -0.0005 & -0.0950 & 0.1366 & -0.0319 & -0.0407 & 0.0533 & -0.0080 \\[1mm]
& s.d. & 0.0646 & 0.0627 & 0.0602 & 0.3711 & 0.3450 & 0.3695 & 0.3882 & 0.3663 & 0.3918 \\[1mm]
$n=5000$ & mean & -2.0013 & 2.9981 & 0.0015 & -0.0290 & 0.0310 & -0.0048 & -0.0189 & 0.0158 & -0.0004 \\[1mm]
& s.d. & 0.0283 & 0.0281 & 0.0282 & 0.1547 & 0.1543 & 0.1562 & 0.1560 & 0.1561 & 0.1578 \\[1mm] 
& & \multicolumn{3}{r|}{(time: 0.2638)} & \multicolumn{3}{r|}{(time: 0.1857)} & \multicolumn{3}{r}{(time: 0.1861)} \\[1mm] \hline
%%%%%
& & \multicolumn{3}{c|}{} & \multicolumn{3}{c|}{} & \multicolumn{3}{c}{} \\[-3mm]
\multicolumn{1}{l}{Density-power:} & $\lambda=0.1$ & $\hat{\theta}_{1,n}(\lambda)$ & $\hat{\theta}_{2,n}(\lambda)$ & $\hat{\theta}_{3,n}(\lambda)$ & $\hat{\theta}_{1,n}(\lambda)$ & $\hat{\theta}_{2,n}(\lambda)$ & $\hat{\theta}_{3,n}(\lambda)$ & $\hat{\theta}_{1,n}(\lambda)$ & $\hat{\theta}_{2,n}(\lambda)$ & $\hat{\theta}_{3,n}(\lambda)$ \\ \hline
& & \multicolumn{3}{c|}{} & \multicolumn{3}{c|}{} & \multicolumn{3}{c}{} \\[-3mm]
$n=1000$ & mean & -1.9917 & 3.0077 & -0.0005 & -2.1008 & 3.1735 & -0.0244 & -2.0698 & 3.1293 & -0.0262 \\[1mm]
& s.d. & 0.0656 & 0.0638 & 0.0613 & 0.1007 & 0.1092 & 0.1044 & 0.0910 & 0.0984 & 0.0947 \\[1mm]
$n=5000$ & mean & -1.9974 & 3.0006 & 0.0016 & -2.0581 & 3.0965 & -0.0188 & -2.0335 & 3.0582 & -0.0116 \\[1mm]
& s.d. & 0.0287 & 0.0284 & 0.0289 & 0.0394 & 0.0404 & 0.0402 & 0.0360 & 0.0361 & 0.0362 \\[1mm] 
& & \multicolumn{3}{r|}{(time: 0.5342)} & \multicolumn{3}{r|}{(time: 0.5286)} & \multicolumn{3}{r}{(time: 0.5283)} \\[1mm] \hline
%%%
& & \multicolumn{3}{c|}{} & \multicolumn{3}{c|}{} & \multicolumn{3}{c}{} \\[-3mm]
& $\lambda=0.5$ & $\hat{\theta}_{1,n}(\lambda)$ & $\hat{\theta}_{2,n}(\lambda)$ & $\hat{\theta}_{3,n}(\lambda)$ & $\hat{\theta}_{1,n}(\lambda)$ & $\hat{\theta}_{2,n}(\lambda)$ & $\hat{\theta}_{3,n}(\lambda)$ & $\hat{\theta}_{1,n}(\lambda)$ & $\hat{\theta}_{2,n}(\lambda)$ & $\hat{\theta}_{3,n}(\lambda)$ \\ \hline
& & \multicolumn{3}{c|}{} & \multicolumn{3}{c|}{} & \multicolumn{3}{c}{} \\[-3mm]
$n=1000$ & mean & -1.9920 & 3.0091 & -0.0007 & -2.0025 & 3.0254 & -0.0031 & -1.9661 & 2.9663 & 0.0043 \\[1mm]
& s.d. & 0.0821 & 0.0825 & 0.0795 & 0.0848 & 0.0876 & 0.0828 & 0.0860 & 0.0893 & 0.0845 \\[1mm]
$n=5000$ & mean & -1.9968 & 2.9999 & 0.0019 & -1.9699 & 2.9577 & 0.0049 & -1.9692 & 2.9563 & 0.0053 \\[1mm]
& s.d. & 0.0360 & 0.0354 & 0.0361 & 0.0381 & 0.0370 & 0.0381 & 0.0378 & 0.0370 & 0.0379 \\[1mm] 
& & \multicolumn{3}{r|}{(time: 0.6399)} & \multicolumn{3}{r|}{(time: 0.6275)} & \multicolumn{3}{r}{(time: 0.6273)} \\[1mm] \hline
%%%
& & \multicolumn{3}{c|}{} & \multicolumn{3}{c|}{} & \multicolumn{3}{c}{} \\[-3mm]
& $\lambda=0.9$ & $\hat{\theta}_{1,n}(\lambda)$ & $\hat{\theta}_{2,n}(\lambda)$ & $\hat{\theta}_{3,n}(\lambda)$ & $\hat{\theta}_{1,n}(\lambda)$ & $\hat{\theta}_{2,n}(\lambda)$ & $\hat{\theta}_{3,n}(\lambda)$ & $\hat{\theta}_{1,n}(\lambda)$ & $\hat{\theta}_{2,n}(\lambda)$ & $\hat{\theta}_{3,n}(\lambda)$ \\ \hline
& & \multicolumn{3}{c|}{} & \multicolumn{3}{c|}{} & \multicolumn{3}{c}{} \\[-3mm]
$n=1000$ & mean & -1.9912 & 3.0110 & -0.0016 & -1.9950 & 3.0151 & 0.0002 & -1.9274 & 2.8900 & 0.0203 \\[1mm]
& s.d. & 0.1035 & 0.1058 & 0.1112 & 0.1059 & 0.1109 & 0.1054 & 0.1077 & 0.1119 & 0.1138 \\[1mm]
$n=5000$ & mean & -1.9966 & 2.9993 & 0.0027 & -1.9324 & 2.8810 & 0.0215 & -1.9325 & 2.8809 & 0.0216 \\[1mm]
& s.d. & 0.0457 & 0.0449 & 0.0489 & 0.0477 & 0.0461 & 0.0508 & 0.0476 & 0.0461 & 0.0506 \\[1mm] 
& & \multicolumn{3}{r|}{(time: 0.6807)} & \multicolumn{3}{r|}{(time: 0.6553)} & \multicolumn{3}{r}{(time: 0.6661)} \\[1mm] \hline
%%%%%
& & \multicolumn{3}{c|}{} & \multicolumn{3}{c|}{} & \multicolumn{3}{c}{} \\[-3mm]
\multicolumn{1}{l}{H\"{o}lder-based:} & $\lambda=0.1$ & $\hat{\theta}_{1,n}(\lambda)$ & $\hat{\theta}_{2,n}(\lambda)$ & $\hat{\theta}_{3,n}(\lambda)$ & $\hat{\theta}_{1,n}(\lambda)$ & $\hat{\theta}_{2,n}(\lambda)$ & $\hat{\theta}_{3,n}(\lambda)$ & $\hat{\theta}_{1,n}(\lambda)$ & $\hat{\theta}_{2,n}(\lambda)$ & $\hat{\theta}_{3,n}(\lambda)$ \\ \hline
& & \multicolumn{3}{c|}{} & \multicolumn{3}{c|}{} & \multicolumn{3}{c}{} \\[-3mm]
$n=1000$ & mean & -1.9917 & 3.0076 & -0.0004 & -2.1008 & 3.1735 & -0.0244 & -2.0739 & 3.1357 & -0.0272 \\[1mm]
& s.d. & 0.0656 & 0.0637 & 0.0612 & 0.1007 & 0.1092 & 0.1044 & 0.0913 & 0.0989 & 0.0951 \\[1mm]
$n=5000$ & mean & -1.9974 & 3.0006 & 0.0015 & -2.0617 & 3.1018 & -0.0195 & -2.0360 & 3.0620 & -0.0121 \\[1mm]
& s.d. & 0.0286 & 0.0283 & 0.0288 & 0.0396 & 0.0406 & 0.0403 & 0.0361 & 0.0362 & 0.0362 \\[1mm] 
& & \multicolumn{3}{r|}{(time: 0.3208)} & \multicolumn{3}{r|}{(time: 0.3128)} & \multicolumn{3}{r}{(time: 0.3123)} \\[1mm] \hline
%%%
& & \multicolumn{3}{c|}{} & \multicolumn{3}{c|}{} & \multicolumn{3}{c}{} \\[-3mm]
& $\lambda=0.5$ & $\hat{\theta}_{1,n}(\lambda)$ & $\hat{\theta}_{2,n}(\lambda)$ & $\hat{\theta}_{3,n}(\lambda)$ & $\hat{\theta}_{1,n}(\lambda)$ & $\hat{\theta}_{2,n}(\lambda)$ & $\hat{\theta}_{3,n}(\lambda)$ & $\hat{\theta}_{1,n}(\lambda)$ & $\hat{\theta}_{2,n}(\lambda)$ & $\hat{\theta}_{3,n}(\lambda)$ \\ \hline
& & \multicolumn{3}{c|}{} & \multicolumn{3}{c|}{} & \multicolumn{3}{c}{} \\[-3mm]
$n=1000$ & mean & -1.9916 & 3.0082 & 0.0001 & -2.0025 & 3.0254 & -0.0031 & -1.9978 & 3.0183 & -0.0017 \\[1mm]
& s.d. & 0.0797 & 0.0794 & 0.0763 & 0.0848 & 0.0876 & 0.0828 & 0.0838 & 0.0868 & 0.0823 \\[1mm]
$n=5000$ & mean & -1.9967 & 2.9999 & 0.0015 & -2.0014 & 3.0091 & -0.0015 & -1.9994 & 3.0055 & -0.0005 \\[1mm]
& s.d. & 0.0351 & 0.0340 & 0.0346 & 0.0374 & 0.0363 & 0.0369 & 0.0370 & 0.0362 & 0.0366 \\[1mm] 
& & \multicolumn{3}{r|}{(time: 0.3564)} & \multicolumn{3}{r|}{(time: 0.3477)} & \multicolumn{3}{r}{(time: 0.3516)} \\[1mm] \hline
%%%
& & \multicolumn{3}{c|}{} & \multicolumn{3}{c|}{} & \multicolumn{3}{c}{} \\[-3mm]
& $\lambda=0.9$ & $\hat{\theta}_{1,n}(\lambda)$ & $\hat{\theta}_{2,n}(\lambda)$ & $\hat{\theta}_{3,n}(\lambda)$ & $\hat{\theta}_{1,n}(\lambda)$ & $\hat{\theta}_{2,n}(\lambda)$ & $\hat{\theta}_{3,n}(\lambda)$ & $\hat{\theta}_{1,n}(\lambda)$ & $\hat{\theta}_{2,n}(\lambda)$ & $\hat{\theta}_{3,n}(\lambda)$ \\ \hline
& & \multicolumn{3}{c|}{} & \multicolumn{3}{c|}{} & \multicolumn{3}{c}{} \\[-3mm]
$n=1000$ & mean & -1.9908 & 3.0093 & 0.0003 & -1.9950 & 3.0151 & 0.0002 & -1.9928 & 3.0121 & 0.0009 \\[1mm]
& s.d. & 0.0997 & 0.1021 & 0.0989 & 0.1059 & 0.1109 & 0.1054 & 0.1051 & 0.1109 & 0.1053 \\[1mm]
$n=5000$ & mean & -1.9963 & 2.9994 & 0.0019 & -1.9977 & 3.0034 & 0.0001 & -1.9971 & 3.0018 & 0.0007 \\[1mm]
& s.d. & 0.0446 & 0.0430 & 0.0433 & 0.0470 & 0.0455 & 0.0459 & 0.0468 & 0.0455 & 0.0457 \\[1mm] 
& & \multicolumn{3}{r|}{(time: 0.3634)} & \multicolumn{3}{r|}{(time: 0.3521)} & \multicolumn{3}{r}{(time: 0.3516)} \\[1mm] \hline
\end{tabular}
}
}
\label{esti14}
\end{center}
\end{table}
%\end{landscape}

%%%

\begin{figure}[t]
\begin{tabular}{c}
\includegraphics[scale=0.09]{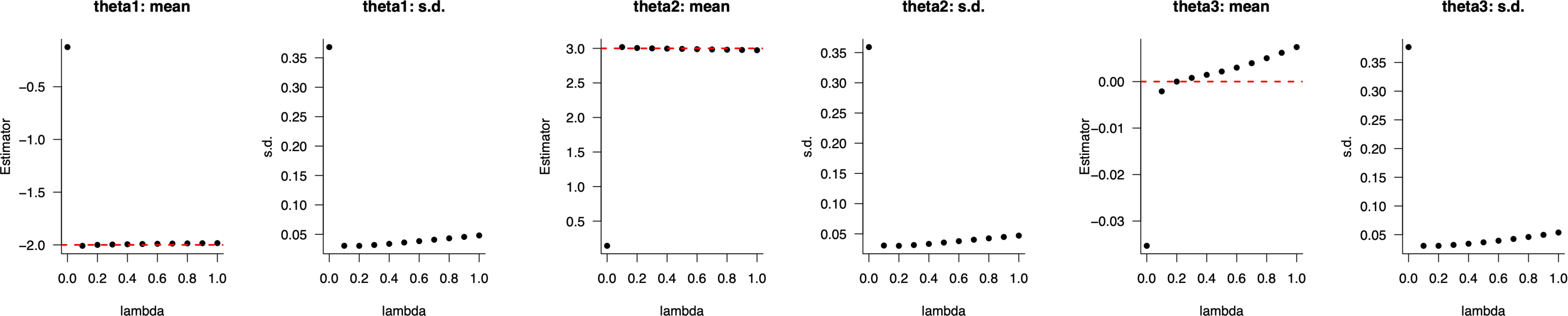}
\end{tabular}
\caption{Mean and standard deviation of the density-power estimator in each $\lambda$ in Section \ref{se:simu12} ($\sigma^{2}=1,\mathfrak{p}=0.01,n=5000$).}
\label{estplot13}
\end{figure}

\begin{figure}[t]
\begin{tabular}{c}
\includegraphics[scale=0.09]{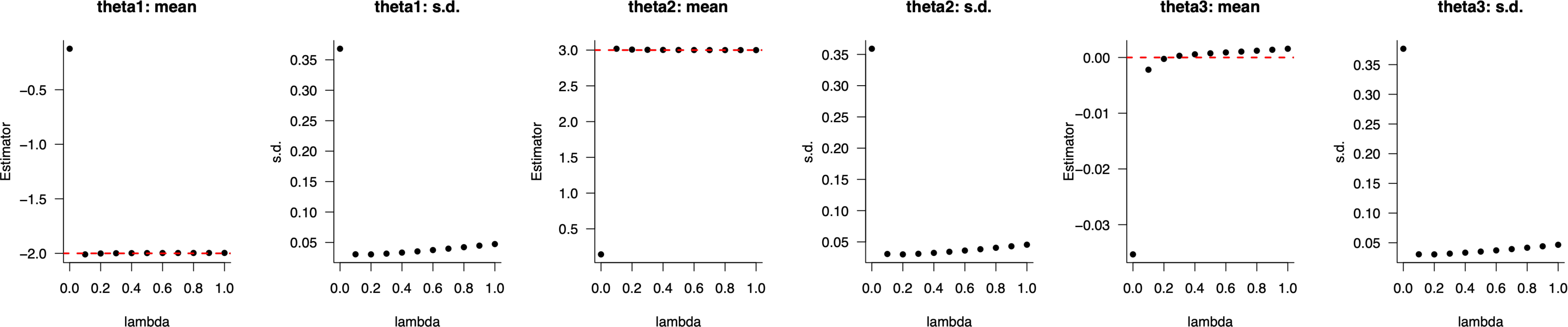}
\end{tabular}
\caption{Mean and standard deviation of the H\"older-based estimator in each $\lambda$ in Section \ref{se:simu12} ($\sigma^{2}=1,\mathfrak{p}=0.01,n=5000$).}
\label{holestplot13}
\end{figure}

%%%
\begin{figure}[t]
\begin{tabular}{c}

\begin{minipage}{0.32 \hsize}
\begin{center}
\includegraphics[scale=0.25]{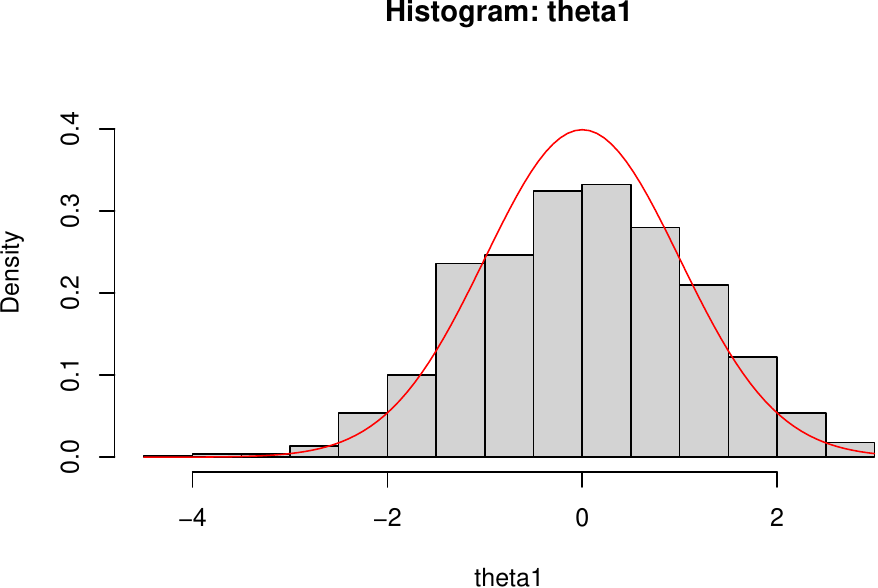}
\end{center}
\end{minipage}

\begin{minipage}{0.32 \hsize}
\begin{center}
\includegraphics[scale=0.25]{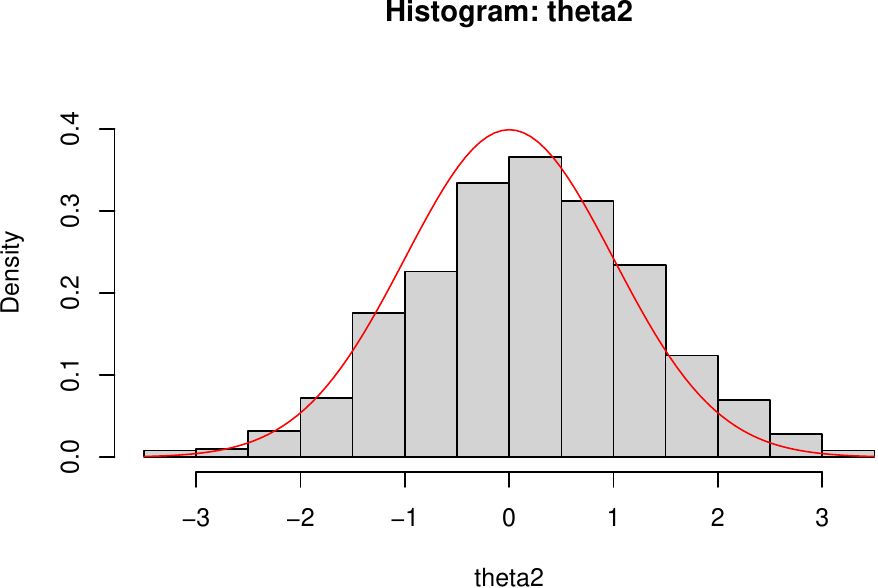}
\end{center}
\end{minipage}

\begin{minipage}{0.32 \hsize}
\begin{center}
\includegraphics[scale=0.25]{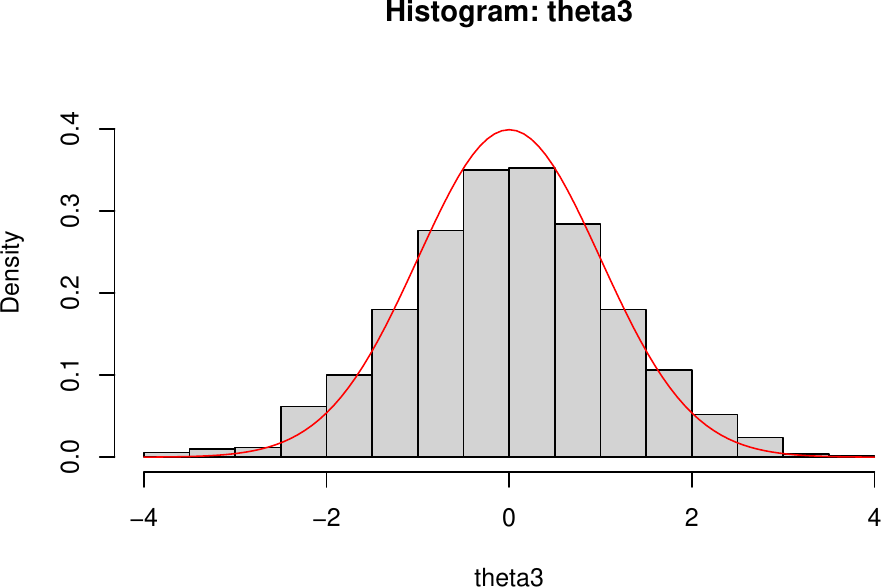}
\end{center}
\end{minipage}

\end{tabular}
\caption{Histograms of $u_{1,n}(\lambda)$, $u_{2,n}(\lambda)$, and $u_{3,n}(\lambda)$ corresponding to the density-power estimator in Section \ref{se:simu12} ($\sigma^{2}=1,\mathfrak{p}=0.01,n=5000, \lambda=0.2$).}
\label{plot12}
\end{figure}

%%%

\begin{figure}[t]
\begin{tabular}{c}

\begin{minipage}{0.32 \hsize}
\begin{center}
\includegraphics[scale=0.25]{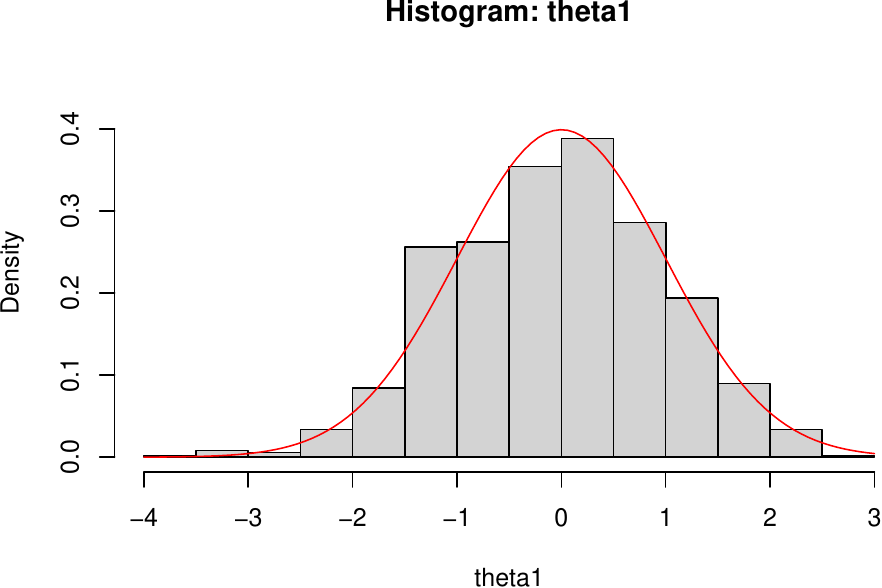}
\end{center}
\end{minipage}

\begin{minipage}{0.32 \hsize}
\begin{center}
\includegraphics[scale=0.25]{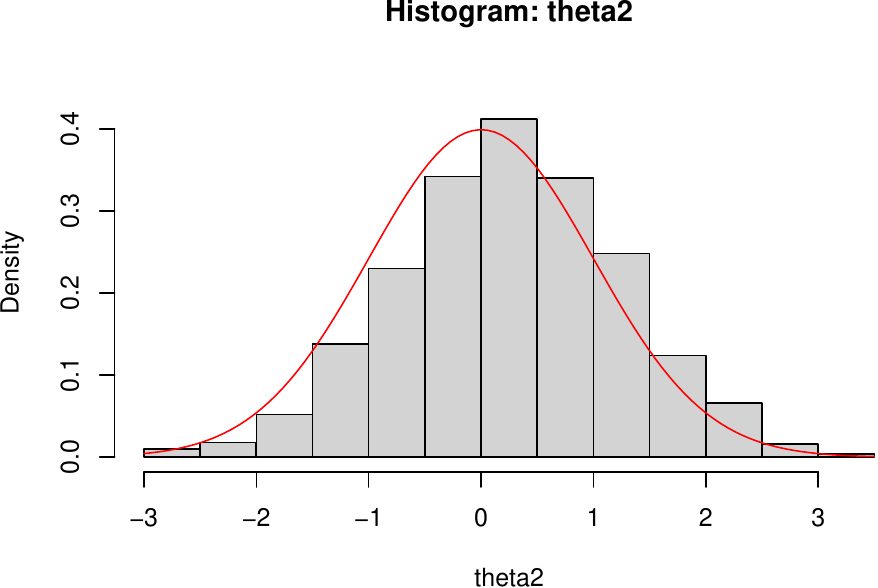}
\end{center}
\end{minipage}

\begin{minipage}{0.32 \hsize}
\begin{center}
\includegraphics[scale=0.25]{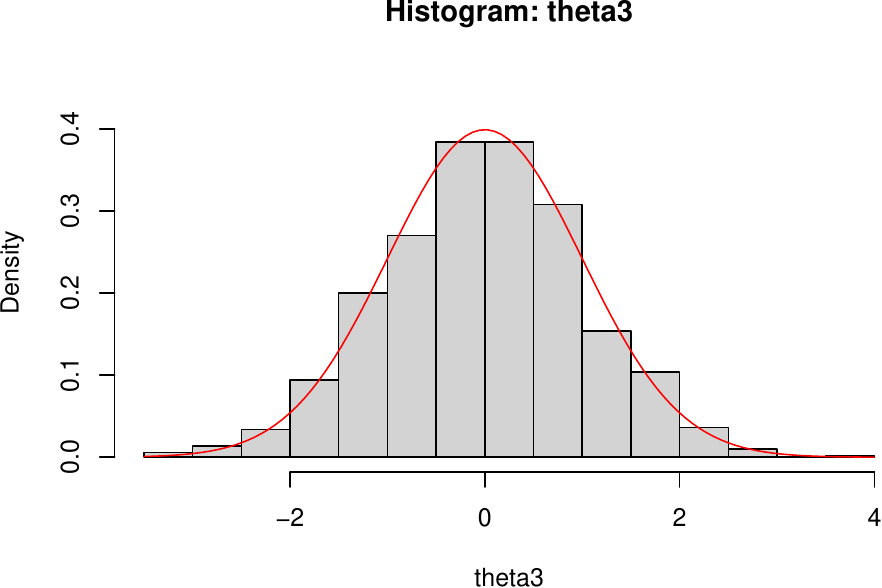}
\end{center}
\end{minipage}

\end{tabular}
\caption{Histograms of $u_{1,n}(\lambda)$, $u_{2,n}(\lambda)$, and $u_{3,n}(\lambda)$ corresponding to the H\"older-based estimator in Section \ref{se:simu12} ($\sigma^{2}=1,\mathfrak{p}=0.01,n=5000, \lambda=0.2$).}
\label{holplot12}
\end{figure}

%%%%% 必要1? from %%%%%
% %%%

% \begin{figure}[t]
% \begin{tabular}{c}

% \begin{minipage}{0.32 \hsize}
% \begin{center}
% \includegraphics[scale=0.25]{figures/estfig12-6.pdf}
% \end{center}
% \end{minipage}

% \begin{minipage}{0.32 \hsize}
% \begin{center}
% \includegraphics[scale=0.25]{figures/estfig12-7.pdf}
% \end{center}
% \end{minipage}

% \begin{minipage}{0.32 \hsize}
% \begin{center}
% \includegraphics[scale=0.25]{figures/estfig12-8.pdf}
% \end{center}
% \end{minipage}

% \end{tabular}
% \caption{Density-power estimator with 95\% confidence interval in each $\lambda$ in Section \ref{se:simu12}($\sigma^{2}=1,\mathfrak{p}=0.01,n=5000$).}
% \label{estplot14}
% \end{figure}

% %%%

% \begin{figure}[t]
% \begin{tabular}{c}

% \begin{minipage}{0.32 \hsize}
% \begin{center}
% \includegraphics[scale=0.25]{figures/holestfig12-6.pdf}
% \end{center}
% \end{minipage}

% \begin{minipage}{0.32 \hsize}
% \begin{center}
% \includegraphics[scale=0.25]{figures/holestfig12-7.pdf}
% \end{center}
% \end{minipage}

% \begin{minipage}{0.32 \hsize}
% \begin{center}
% \includegraphics[scale=0.25]{figures/holestfig12-8.pdf}
% \end{center}
% \end{minipage}

% \end{tabular}
% \caption{H\"older-based estimator with 95\% confidence interval in each $\lambda$ in Section \ref{se:simu12}($\sigma^{2}=1,\mathfrak{p}=0.01,n=5000$).}
% \label{holestplot14}
% \end{figure}
%%%%% 必要1? to %%%%%

%%%

\begin{table}[t]
\begin{center}
\caption{Frequency that the 95\% confidence interval corresponding to the density-power estimator contains the true value in each $\lambda$ in Section \ref{se:simu12} ($\sigma^{2}=1,\mathfrak{p}=0.01,n=5000$).}
%\scalebox{0.9}[0.9]{
\begin{tabular}{c | r r r r r r r r r r} \hline
& \multicolumn{9}{c}{} \\[-3mm]
$\lambda$ & 0.1 & 0.2 & 0.3 & 0.4 & 0.5 & 0.6 & 0.7 & 0.8 & 0.9 & 1 \\[1mm] \hline
& \multicolumn{9}{c}{} \\[-3mm]
$\theta_{1}$ & 0.88 & 0.92 & 0.93 & 0.93 & 0.93 & 0.93 & 0.93 & 0.93 & 0.93 & 0.94 \\[1mm]
$\theta_{2}$ & 0.83 & 0.92 & 0.94 & 0.94 & 0.94 & 0.94 & 0.93 & 0.93 & 0.92 & 0.92 \\[1mm]
$\theta_{3}$ & 0.89 & 0.91 & 0.91 & 0.92 & 0.93 & 0.93 & 0.94 & 0.94 & 0.94 & 0.94 \\[1mm] \hline
\end{tabular}
%}
\label{ciratio12}
\end{center}
\end{table}

%%%

\begin{table}[t]
\begin{center}
\caption{Frequency that the 95\% confidence interval corresponding to the H\"older-based estimator contains the true value in each $\lambda$ in Section \ref{se:simu12} ($\sigma^{2}=1,\mathfrak{p}=0.01,n=5000$).}
%\scalebox{0.9}[0.9]{
\begin{tabular}{c | r r r r r r r r r r} \hline
& \multicolumn{9}{c}{} \\[-3mm]
$\lambda$ & 0.1 & 0.2 & 0.3 & 0.4 & 0.5 & 0.6 & 0.7 & 0.8 & 0.9 & 1 \\[1mm] \hline
& \multicolumn{9}{c}{} \\[-3mm]
$\theta_{1}$ & 0.92 & 0.95 & 0.95 & 0.94 & 0.95 & 0.95 & 0.95 & 0.95 & 0.95 & 0.94 \\[1mm]
$\theta_{2}$ & 0.88 & 0.94 & 0.95 & 0.95 & 0.95 & 0.96 & 0.96 & 0.95 & 0.95 & 0.95 \\[1mm]
$\theta_{3}$ & 0.94 & 0.94 & 0.94 & 0.94 & 0.94 & 0.94 & 0.94 & 0.94 & 0.94 & 0.94 \\[1mm] \hline
\end{tabular}
%}
\label{holciratio12}
\end{center}
\end{table}

%%%%%
\subsubsection{With some jumps} \label{se:simu22}

We set $s=1$, $Y_{t_{j}}=\ly_{t_{j}}$, and $X_{t_{j}}=\lx_{t_{j}}$.
Moreover, we consider the following cases as jump-size distributions of the compound Poisson process $J$: (i) $\serev{\mathcal{N}}(0,3)$, (ii) $Gamma(1,1)$, where $Gamma$ denotes the Gamma distribution with both shape and scale parameters equal to $1$.
In these cases, simulations are performed for $q=0.01n$ and $0.05n$.
The plots in Figure \ref{pathplot3} show one of 1000 sample paths of cases (i) and (ii) for $q=0.01n$ and $n=5000$.

Tables \ref{esti33} and \ref{esti34} show the estimation results by using GQMLE and density-power GQMLE, and H\"{o}lder-based GQMLE in cases (i) and (ii), respectively.
We can observe that the performance of density-power and H\"{o}lder-based GQMLEs is better than that of GQMLE.
The calculation time of H\"{o}lder-based GQMLE is shorter than that of density-power GQMLE.

Figures \ref{estplot33} and \ref{estplot35} give the means and standard deviations of the density-power GQMLEs in each $\lambda$, and Figures \ref{holestplot33} and \ref{holestplot35} give the means and standard deviations of the H\"{o}lder-based GQMLEs in each $\lambda$.
From these figures, the density-power and H\"{o}lder-based GQMLEs have similar tendencies as in Section \ref{se:simu12} and seem to perform best when $\lambda=0.2$ in case (i) and when $\lambda=0.7$ in case (ii).

Figures \ref{plot32}, \ref{holplot32}, \serev{Tables \ref{ciratio31}, and \ref{holciratio31}} present the behaviors of the density-power and H\"{o}lder-based GQMLEs in case (i).
Figures \ref{plot32} and \ref{holplot32} give the histograms of $u_{1,n}(\lambda)$, $u_{2,n}(\lambda)$, and $u_{3,n}(\lambda)$ in the case of $\lambda=0.2$. 
Figure \ref{plot32} is based on the 1000th sample data and density-power GQMLE, and Figure \ref{holplot32} is based on the 1000th sample data and H\"{o}lder-based GQMLE.
Here, $u_{1,n}(\lambda)$, $u_{2,n}(\lambda)$, and $u_{3,n}(\lambda)$ are given by the corresponding to \eqref{se: asym.norm}.
%Moreover, using one of 1000 sample data sets, Figures \ref{estplot34} and \ref{holestplot34} show the 95\% confidence intervals in each $\lambda$ for density-power and H\"{o}lder-based GQMLEs, respectively. 
%These figures show that the density-power and H\"{o}lder-base GQMLEs have the appropriate results. 
\serev{Tables \ref{ciratio31} and \ref{holciratio31} summarize} the frequencies with which the true values of $\theta_{1}$, $\theta_{2}$, and $\theta_{3}$ are included in the 95\% confidence intervals in each $\lambda$ for density-power and H\"{o}lder-based GQMLEs, respectively.
The tendencies of these figures and tables are the same as those in Section \ref{se:simu12}.
In case (ii), we can also observe similar results as in case (i).
The simulation results for (ii) are provided in Appendix \ref{suppl_1}.

%%%%

\begin{figure}[t]
\begin{tabular}{c}

\begin{minipage}{0.45 \hsize}
\begin{center}
\includegraphics[scale=0.35]{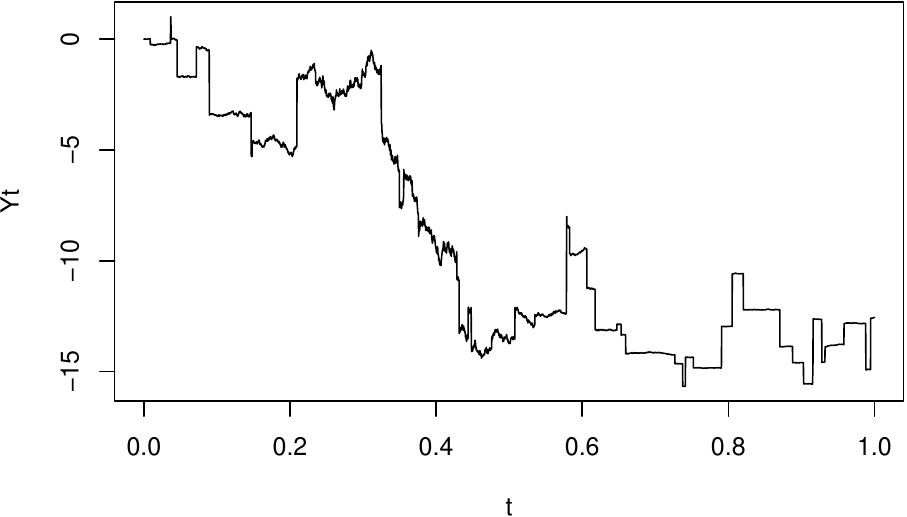}
\end{center}
\end{minipage}

\begin{minipage}{0.45 \hsize}
\begin{center}
\includegraphics[scale=0.35]{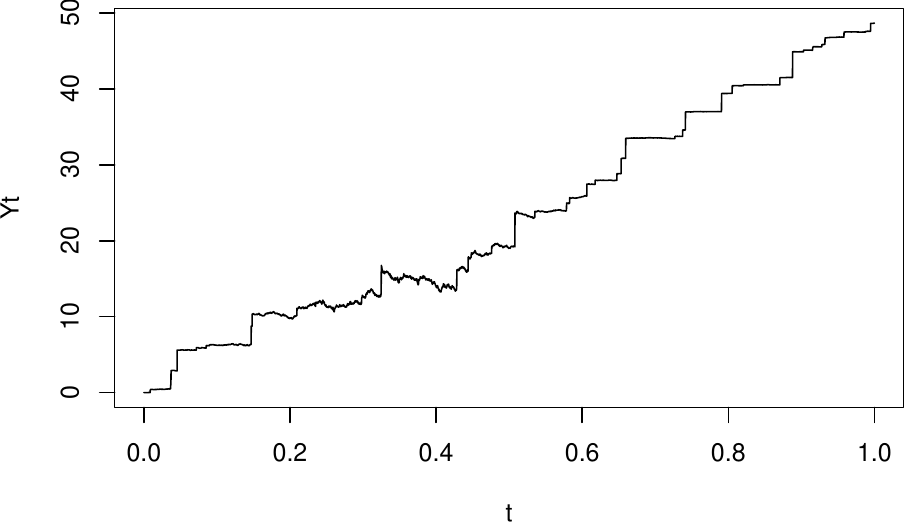}
\end{center}
\end{minipage}

\end{tabular}
\caption{One of 1000 sample paths in Section \ref{se:simu22} ($q=0.01n$, $n=5000$). The left one is the case of (i), and the right one is the case of (ii).}
\label{pathplot3}
\end{figure}

%%%

\begin{table}[t]
\begin{center}
\caption{GQMLE, density-power GQMLE, and H\"{o}lder-based GQMLE in Section \ref{se:simu22} (i) ($\theta_{0}=(-2,3,0)^{\top}$). ``time'' shows the mean calculation time.}
\scalebox{0.55}[0.55]{
\serev{
\begin{tabular}{r r | r r r | r r r | r r r | r r r} \hline
& & \multicolumn{3}{c}{} & \multicolumn{3}{c|}{} & \multicolumn{3}{c}{} & \multicolumn{3}{c}{} \\[-3mm]
& & \multicolumn{6}{l|}{$q=0.01n$} & \multicolumn{6}{l}{$q=0.05n$} \\ \cline{3-14}
& & \multicolumn{3}{c|}{} & \multicolumn{3}{c|}{} & \multicolumn{3}{c|}{} & \multicolumn{3}{c}{} \\[-3mm]
& & \multicolumn{3}{l|}{$n=1000$} & \multicolumn{3}{l|}{$n=5000$} & \multicolumn{3}{l|}{$n=1000$} & \multicolumn{3}{l}{$n=5000$} \\ \hline
& & \multicolumn{3}{c|}{} & \multicolumn{3}{c|}{} & \multicolumn{3}{c|}{} & \multicolumn{3}{c}{} \\[-3mm]
\multicolumn{1}{l}{GQMLE} & & $\hat{\theta}_{1,n}$ & $\hat{\theta}_{2,n}$ & $\hat{\theta}_{3,n}$ & $\hat{\theta}_{1,n}$ & $\hat{\theta}_{2,n}$ & $\hat{\theta}_{3,n}$ & $\hat{\theta}_{1,n}$ & $\hat{\theta}_{2,n}$ & $\hat{\theta}_{3,n}$ & $\hat{\theta}_{1,n}$ & $\hat{\theta}_{2,n}$ & $\hat{\theta}_{3,n}$ \\ \hline
& & \multicolumn{3}{c|}{} & \multicolumn{3}{c|}{} & \multicolumn{3}{c|}{} & \multicolumn{3}{c}{} \\[-3mm]
& mean & -0.5540 & 0.7562 & -0.1159 & -0.0496 & 0.0729 & -0.0087 & -0.0720 & 0.0956 & -0.0348 & -0.0078 & 0.0130 & -0.0139 \\[1mm] 
& s.d. & 1.0439 & 1.0756 & 1.0447 & 0.3639 & 0.3667 & 0.3830 & 0.3740 & 0.3766 & 0.3939 & 0.1605 & 0.1659 & 0.1621 \\[1mm] 
& & \multicolumn{3}{c|}{} & \multicolumn{3}{r|}{(time: 0.1515)} & \multicolumn{3}{c|}{} & \multicolumn{3}{r}{(time: 0.1871)} \\[1mm] \hline
%%%%%
& & \multicolumn{3}{c|}{} & \multicolumn{3}{c|}{} & \multicolumn{3}{c|}{} & \multicolumn{3}{c}{} \\[-3mm]
\multicolumn{1}{l}{Density-power:} & $\lambda=0.1$ & $\hat{\theta}_{1,n}(\lambda)$ & $\hat{\theta}_{2,n}(\lambda)$ & $\hat{\theta}_{3,n}(\lambda)$ & $\hat{\theta}_{1,n}(\lambda)$ & $\hat{\theta}_{2,n}(\lambda)$ & $\hat{\theta}_{3,n}(\lambda)$ & $\hat{\theta}_{1,n}(\lambda)$ & $\hat{\theta}_{2,n}(\lambda)$ & $\hat{\theta}_{3,n}(\lambda)$ & $\hat{\theta}_{1,n}(\lambda)$ & $\hat{\theta}_{2,n}(\lambda)$ & $\hat{\theta}_{3,n}(\lambda)$ \\ \hline
& & \multicolumn{3}{c|}{} & \multicolumn{3}{c|}{} & \multicolumn{3}{c|}{} & \multicolumn{3}{c}{} \\[-3mm]
& mean & -2.0009 & 3.0162 & 0.0027 & -2.0023 & 3.0062 & -0.0003 & -2.0322 & 3.0614 & -0.0100 & -2.0136 & 3.0281 & -0.0076 \\[1mm] 
& s.d. & 0.0664 & 0.0652 & 0.0656 & 0.0284 & 0.0291 & 0.0288 & 0.0727 & 0.0753 & 0.0727 & 0.0311 & 0.0306 & 0.0308 \\[1mm] 
& & \multicolumn{3}{c|}{} & \multicolumn{3}{r|}{(time: 0.4012)} & \multicolumn{3}{c|}{} & \multicolumn{3}{r}{(time: 0.5549)} \\[1mm]\hline
%%%
& & \multicolumn{3}{c|}{} & \multicolumn{3}{c|}{} & \multicolumn{3}{c|}{} & \multicolumn{3}{c}{} \\[-3mm]
& $\lambda=0.5$ & $\hat{\theta}_{1,n}(\lambda)$ & $\hat{\theta}_{2,n}(\lambda)$ & $\hat{\theta}_{3,n}(\lambda)$ & $\hat{\theta}_{1,n}(\lambda)$ & $\hat{\theta}_{2,n}(\lambda)$ & $\hat{\theta}_{3,n}(\lambda)$ & $\hat{\theta}_{1,n}(\lambda)$ & $\hat{\theta}_{2,n}(\lambda)$ & $\hat{\theta}_{3,n}(\lambda)$ & $\hat{\theta}_{1,n}(\lambda)$ & $\hat{\theta}_{2,n}(\lambda)$ & $\hat{\theta}_{3,n}(\lambda)$ \\ \hline
& & \multicolumn{3}{c|}{} & \multicolumn{3}{c|}{} & \multicolumn{3}{c|}{} & \multicolumn{3}{c}{} \\[-3mm]
& mean & -1.9925 & 3.0048 & 0.0051 & -1.9974 & 2.9982 & 0.0016 & -1.9863 & 2.9916 & 0.0019 & -1.9837 & 2.9805 & 0.0001 \\[1mm] 
& s.d. & 0.0822 & 0.0828 & 0.0808 & 0.0353 & 0.0376 & 0.0370 & 0.0818 & 0.0854 & 0.0801 & 0.0374 & 0.0364 & 0.0373 \\[1mm] 
& & \multicolumn{3}{c|}{} & \multicolumn{3}{r|}{(time: 0.4774)} & \multicolumn{3}{c|}{} & \multicolumn{3}{r}{(time: 0.6503)} \\[1mm]\hline
%%%
& & \multicolumn{3}{c|}{} & \multicolumn{3}{c|}{} & \multicolumn{3}{c|}{} & \multicolumn{3}{c}{} \\[-3mm]
& $\lambda=0.9$ & $\hat{\theta}_{1,n}(\lambda)$ & $\hat{\theta}_{2,n}(\lambda)$ & $\hat{\theta}_{3,n}(\lambda)$ & $\hat{\theta}_{1,n}(\lambda)$ & $\hat{\theta}_{2,n}(\lambda)$ & $\hat{\theta}_{3,n}(\lambda)$ & $\hat{\theta}_{1,n}(\lambda)$ & $\hat{\theta}_{2,n}(\lambda)$ & $\hat{\theta}_{3,n}(\lambda)$ & $\hat{\theta}_{1,n}(\lambda)$ & $\hat{\theta}_{2,n}(\lambda)$ & $\hat{\theta}_{3,n}(\lambda)$ \\ \hline
& & \multicolumn{3}{c|}{} & \multicolumn{3}{c|}{} & \multicolumn{3}{c|}{} & \multicolumn{3}{c}{} \\[-3mm]
& mean & -1.9896 & 2.9989 & 0.0082 & -1.9946 & 2.9917 & 0.0035 & -1.9704 & 2.9586 & 0.0098 & -1.9660 & 2.9446 & 0.0076 \\[1mm] 
& s.d. & 0.1031 & 0.1045 & 0.1104 & 0.0443 & 0.0478 & 0.0508 & 0.1010 & 0.1080 & 0.1101 & 0.0468 & 0.0460 & 0.0504 \\[1mm] 
& & \multicolumn{3}{c|}{} & \multicolumn{3}{r|}{(time: 0.5076)} & \multicolumn{3}{c|}{} & \multicolumn{3}{r}{(time: 0.6716)} \\[1mm] \hline
%%%%%
& & \multicolumn{3}{c|}{} & \multicolumn{3}{c|}{} & \multicolumn{3}{c|}{} & \multicolumn{3}{c}{} \\[-3mm]
\multicolumn{1}{l}{H\"{o}lder-based:} & $\lambda=0.1$ & $\hat{\theta}_{1,n}(\lambda)$ & $\hat{\theta}_{2,n}(\lambda)$ & $\hat{\theta}_{3,n}(\lambda)$ & $\hat{\theta}_{1,n}(\lambda)$ & $\hat{\theta}_{2,n}(\lambda)$ & $\hat{\theta}_{3,n}(\lambda)$ & $\hat{\theta}_{1,n}(\lambda)$ & $\hat{\theta}_{2,n}(\lambda)$ & $\hat{\theta}_{3,n}(\lambda)$ & $\hat{\theta}_{1,n}(\lambda)$ & $\hat{\theta}_{2,n}(\lambda)$ & $\hat{\theta}_{3,n}(\lambda)$ \\ \hline
& & \multicolumn{3}{c|}{} & \multicolumn{3}{c|}{} & \multicolumn{3}{c|}{} & \multicolumn{3}{c}{} \\[-3mm]
& mean & -2.0012 & 3.0167 & 0.0026 & -2.0025 & 3.0065 & -0.0004 & -2.0341 & 3.0641 & -0.0103 & -2.0148 & 3.0299 & -0.0078 \\[1mm] 
& s.d. & 0.0664 & 0.0651 & 0.0655 & 0.0284 & 0.0291 & 0.0288 & 0.0728 & 0.0754 & 0.0727 & 0.0311 & 0.0306 & 0.0308 \\[1mm]
& & \multicolumn{3}{c|}{} & \multicolumn{3}{r|}{(time: 0.2391)} & \multicolumn{3}{c|}{} & \multicolumn{3}{r}{(time: 0.3141)} \\[1mm] \hline
%%%
& & \multicolumn{3}{c|}{} & \multicolumn{3}{c|}{} & \multicolumn{3}{c|}{} & \multicolumn{3}{c}{} \\[-3mm]
& $\lambda=0.5$ & $\hat{\theta}_{1,n}(\lambda)$ & $\hat{\theta}_{2,n}(\lambda)$ & $\hat{\theta}_{3,n}(\lambda)$ & $\hat{\theta}_{1,n}(\lambda)$ & $\hat{\theta}_{2,n}(\lambda)$ & $\hat{\theta}_{3,n}(\lambda)$ & $\hat{\theta}_{1,n}(\lambda)$ & $\hat{\theta}_{2,n}(\lambda)$ & $\hat{\theta}_{3,n}(\lambda)$ & $\hat{\theta}_{1,n}(\lambda)$ & $\hat{\theta}_{2,n}(\lambda)$ & $\hat{\theta}_{3,n}(\lambda)$ \\ \hline
& & \multicolumn{3}{c|}{} & \multicolumn{3}{c|}{} & \multicolumn{3}{c|}{} & \multicolumn{3}{c}{} \\[-3mm]
& mean & -1.9951 & 3.0092 & 0.0040 & -2.0001 & 3.0024 & 0.0012 & -2.0009 & 3.0157 & -0.0005 & -1.9983 & 3.0039 & -0.0023 \\[1mm] 
& s.d. & 0.0804 & 0.0793 & 0.0780 & 0.0345 & 0.0363 & 0.0356 & 0.0802 & 0.0823 & 0.0772 & 0.0365 & 0.0354 & 0.0363 \\[1mm]
& & \multicolumn{3}{c|}{} & \multicolumn{3}{r|}{(time: 0.2658)} & \multicolumn{3}{c|}{} & \multicolumn{3}{r}{(time: 0.3573)} \\[1mm] \hline
%%%
& & \multicolumn{3}{c|}{} & \multicolumn{3}{c|}{} & \multicolumn{3}{c|}{} & \multicolumn{3}{c}{} \\[-3mm]
& $\lambda=0.9$ & $\hat{\theta}_{1,n}(\lambda)$ & $\hat{\theta}_{2,n}(\lambda)$ & $\hat{\theta}_{3,n}(\lambda)$ & $\hat{\theta}_{1,n}(\lambda)$ & $\hat{\theta}_{2,n}(\lambda)$ & $\hat{\theta}_{3,n}(\lambda)$ & $\hat{\theta}_{1,n}(\lambda)$ & $\hat{\theta}_{2,n}(\lambda)$ & $\hat{\theta}_{3,n}(\lambda)$ & $\hat{\theta}_{1,n}(\lambda)$ & $\hat{\theta}_{2,n}(\lambda)$ & $\hat{\theta}_{3,n}(\lambda)$ \\ \hline
& & \multicolumn{3}{c|}{} & \multicolumn{3}{c|}{} & \multicolumn{3}{c|}{} & \multicolumn{3}{c}{} \\[-3mm]
& mean & -1.9947 & 3.0110 & 0.0043 & -2.0009 & 3.0028 & 0.0017 & -2.0008 & 3.0174 & 0.0004 & -1.9974 & 3.0030 & -0.0022 \\[1mm] 
& s.d. & 0.1013 & 0.1003 & 0.0993 & 0.0434 & 0.0465 & 0.0455 & 0.1000 & 0.1046 & 0.0989 & 0.0460 & 0.0454 & 0.0460 \\[1mm]
& & \multicolumn{3}{c|}{} & \multicolumn{3}{r|}{(time: 0.2690)} & \multicolumn{3}{c|}{} & \multicolumn{3}{r}{(time: 0.3700)} \\[1mm] \hline
\end{tabular}
}
}
\label{esti33}
\end{center}
\end{table}

%%%

\begin{table}[t]
\begin{center}
\caption{GQMLE, density-power GQMLE, and H\"{o}lder-based GQMLE in Section \ref{se:simu22} (ii) ($\theta_{0}=(-2,3,0)^{\top}$). ``time'' shows the mean calculation time.}
\scalebox{0.55}[0.55]{
\serev{
\begin{tabular}{r r | r r r | r r r | r r r | r r r} \hline
& & \multicolumn{3}{c}{} & \multicolumn{3}{c|}{} & \multicolumn{3}{c}{} & \multicolumn{3}{c}{} \\[-3mm]
& & \multicolumn{6}{l|}{$q=0.01n$} & \multicolumn{6}{l}{$q=0.05n$} \\ \cline{3-14}
& & \multicolumn{3}{c|}{} & \multicolumn{3}{c|}{} & \multicolumn{3}{c|}{} & \multicolumn{3}{c}{} \\[-3mm]
& & \multicolumn{3}{l|}{$n=1000$} & \multicolumn{3}{l|}{$n=5000$} & \multicolumn{3}{l|}{$n=1000$} & \multicolumn{3}{l}{$n=5000$} \\ \hline
& & \multicolumn{3}{c|}{} & \multicolumn{3}{c|}{} & \multicolumn{3}{c|}{} & \multicolumn{3}{c}{} \\[-3mm]
\multicolumn{1}{l}{GQMLE} & & $\hat{\theta}_{1,n}$ & $\hat{\theta}_{2,n}$ & $\hat{\theta}_{3,n}$ & $\hat{\theta}_{1,n}$ & $\hat{\theta}_{2,n}$ & $\hat{\theta}_{3,n}$ & $\hat{\theta}_{1,n}$ & $\hat{\theta}_{2,n}$ & $\hat{\theta}_{3,n}$ & $\hat{\theta}_{1,n}$ & $\hat{\theta}_{2,n}$ & $\hat{\theta}_{3,n}$ \\ \hline
& & \multicolumn{3}{c|}{} & \multicolumn{3}{c|}{} & \multicolumn{3}{c|}{} & \multicolumn{3}{c}{} \\[-3mm]
& mean & -0.6687 & 1.0463 & -0.0841 & -0.1022 & 0.1396 & -0.0541 & -0.1008 & 0.1627 & -0.0537 & -0.0103 & 0.0376 & 0.0003 \\[1mm] 
& s.d. & 1.0368 & 1.1050 & 0.9986 & 0.4938 & 0.4685 & 0.4762 & 0.4917 & 0.4579 & 0.4895 & 0.2280 & 0.2141 & 0.2236 \\[1mm] 
& & \multicolumn{3}{c|}{} & \multicolumn{3}{r|}{(time: 0.1561)} & \multicolumn{3}{c|}{} & \multicolumn{3}{r}{(time: 0.1432)} \\[1mm] \hline
%%%%%
& & \multicolumn{3}{c|}{} & \multicolumn{3}{c|}{} & \multicolumn{3}{c|}{} & \multicolumn{3}{c}{} \\[-3mm]
\multicolumn{1}{l}{Density-power:} & $\lambda=0.1$ & $\hat{\theta}_{1,n}(\lambda)$ & $\hat{\theta}_{2,n}(\lambda)$ & $\hat{\theta}_{3,n}(\lambda)$ & $\hat{\theta}_{1,n}(\lambda)$ & $\hat{\theta}_{2,n}(\lambda)$ & $\hat{\theta}_{3,n}(\lambda)$ & $\hat{\theta}_{1,n}(\lambda)$ & $\hat{\theta}_{2,n}(\lambda)$ & $\hat{\theta}_{3,n}(\lambda)$ & $\hat{\theta}_{1,n}(\lambda)$ & $\hat{\theta}_{2,n}(\lambda)$ & $\hat{\theta}_{3,n}(\lambda)$ \\ \hline
& & \multicolumn{3}{c|}{} & \multicolumn{3}{c|}{} & \multicolumn{3}{c|}{} & \multicolumn{3}{c}{} \\[-3mm]
& mean & -2.0004 & 3.0158 & -0.0015 & -2.0046 & 3.0091 & -0.0017 & -2.0336 & 3.0631 & -0.0036 & -2.0263 & 3.0395 & -0.0079 \\[1mm] 
& s.d. & 0.0646 & 0.0671 & 0.0658 & 0.0292 & 0.0289 & 0.0289 & 0.0736 & 0.0754 & 0.0801 & 0.0310 & 0.0304 & 0.0315 \\[1mm]
& & \multicolumn{3}{c|}{} & \multicolumn{3}{r|}{(time: 0.4029)} & \multicolumn{3}{c|}{} & \multicolumn{3}{r}{(time: 0.4025)} \\[1mm]\hline
%%%
& & \multicolumn{3}{c|}{} & \multicolumn{3}{c|}{} & \multicolumn{3}{c|}{} & \multicolumn{3}{c}{} \\[-3mm]
& $\lambda=0.5$ & $\hat{\theta}_{1,n}(\lambda)$ & $\hat{\theta}_{2,n}(\lambda)$ & $\hat{\theta}_{3,n}(\lambda)$ & $\hat{\theta}_{1,n}(\lambda)$ & $\hat{\theta}_{2,n}(\lambda)$ & $\hat{\theta}_{3,n}(\lambda)$ & $\hat{\theta}_{1,n}(\lambda)$ & $\hat{\theta}_{2,n}(\lambda)$ & $\hat{\theta}_{3,n}(\lambda)$ & $\hat{\theta}_{1,n}(\lambda)$ & $\hat{\theta}_{2,n}(\lambda)$ & $\hat{\theta}_{3,n}(\lambda)$ \\ \hline
& & \multicolumn{3}{c|}{} & \multicolumn{3}{c|}{} & \multicolumn{3}{c|}{} & \multicolumn{3}{c}{} \\[-3mm]
& mean & -1.9924 & 3.0035 & -0.0008 & -1.9963 & 2.9973 & -0.0007 & -1.9832 & 2.9873 & 0.0028 & -1.9871 & 2.9793 & 0.0008 \\[1mm] 
& s.d. & 0.0802 & 0.0823 & 0.0814 & 0.0363 & 0.0369 & 0.0356 & 0.0824 & 0.0855 & 0.0871 & 0.0361 & 0.0365 & 0.0372 \\[1mm]
& & \multicolumn{3}{c|}{} & \multicolumn{3}{r|}{(time: 0.4819)} & \multicolumn{3}{c|}{} & \multicolumn{3}{r}{(time: 0.4813)} \\[1mm]\hline
%%%
& & \multicolumn{3}{c|}{} & \multicolumn{3}{c|}{} & \multicolumn{3}{c|}{} & \multicolumn{3}{c}{} \\[-3mm]
& $\lambda=0.9$ & $\hat{\theta}_{1,n}(\lambda)$ & $\hat{\theta}_{2,n}(\lambda)$ & $\hat{\theta}_{3,n}(\lambda)$ & $\hat{\theta}_{1,n}(\lambda)$ & $\hat{\theta}_{2,n}(\lambda)$ & $\hat{\theta}_{3,n}(\lambda)$ & $\hat{\theta}_{1,n}(\lambda)$ & $\hat{\theta}_{2,n}(\lambda)$ & $\hat{\theta}_{3,n}(\lambda)$ & $\hat{\theta}_{1,n}(\lambda)$ & $\hat{\theta}_{2,n}(\lambda)$ & $\hat{\theta}_{3,n}(\lambda)$ \\ \hline
& & \multicolumn{3}{c|}{} & \multicolumn{3}{c|}{} & \multicolumn{3}{c|}{} & \multicolumn{3}{c}{} \\[-3mm]
& mean & -1.9898 & 2.9986 & 0.0004 & -1.9921 & 2.9899 & 0.0008 & -1.9640 & 2.9523 & 0.0087 & -1.9687 & 2.9433 & 0.0081 \\[1mm] 
& s.d. & 0.1014 & 0.1052 & 0.1112 & 0.0456 & 0.0469 & 0.0490 & 0.1046 & 0.1073 & 0.1147 & 0.0443 & 0.0459 & 0.0505 \\[1mm] 
& & \multicolumn{3}{c|}{} & \multicolumn{3}{r|}{(time: 0.5132)} & \multicolumn{3}{c|}{} & \multicolumn{3}{r}{(time: 0.5116)} \\[1mm] \hline
%%%%%
& & \multicolumn{3}{c|}{} & \multicolumn{3}{c|}{} & \multicolumn{3}{c|}{} & \multicolumn{3}{c}{} \\[-3mm]
\multicolumn{1}{l}{H\"{o}lder-based:} & $\lambda=0.1$ & $\hat{\theta}_{1,n}(\lambda)$ & $\hat{\theta}_{2,n}(\lambda)$ & $\hat{\theta}_{3,n}(\lambda)$ & $\hat{\theta}_{1,n}(\lambda)$ & $\hat{\theta}_{2,n}(\lambda)$ & $\hat{\theta}_{3,n}(\lambda)$ & $\hat{\theta}_{1,n}(\lambda)$ & $\hat{\theta}_{2,n}(\lambda)$ & $\hat{\theta}_{3,n}(\lambda)$ & $\hat{\theta}_{1,n}(\lambda)$ & $\hat{\theta}_{2,n}(\lambda)$ & $\hat{\theta}_{3,n}(\lambda)$ \\ \hline
& & \multicolumn{3}{c|}{} & \multicolumn{3}{c|}{} & \multicolumn{3}{c|}{} & \multicolumn{3}{c}{} \\[-3mm]
& mean & -2.0007 & 3.0163 & -0.0016 & -2.0049 & 3.0095 & -0.0017 & -2.0358 & 3.0664 & -0.0039 & -2.0279 & 3.0419 & -0.0081 \\[1mm] 
& s.d. & 0.0646 & 0.0670 & 0.0658 & 0.0292 & 0.0289 & 0.0289 & 0.0737 & 0.0754 & 0.0802 & 0.0310 & 0.0304 & 0.0315 \\[1mm] 
& & \multicolumn{3}{c|}{} & \multicolumn{3}{r|}{(time: 0.2393)} & \multicolumn{3}{c|}{} & \multicolumn{3}{r}{(time: 0.2393)} \\[1mm] \hline
%%%
& & \multicolumn{3}{c|}{} & \multicolumn{3}{c|}{} & \multicolumn{3}{c|}{} & \multicolumn{3}{c}{} \\[-3mm]
& $\lambda=0.5$ & $\hat{\theta}_{1,n}(\lambda)$ & $\hat{\theta}_{2,n}(\lambda)$ & $\hat{\theta}_{3,n}(\lambda)$ & $\hat{\theta}_{1,n}(\lambda)$ & $\hat{\theta}_{2,n}(\lambda)$ & $\hat{\theta}_{3,n}(\lambda)$ & $\hat{\theta}_{1,n}(\lambda)$ & $\hat{\theta}_{2,n}(\lambda)$ & $\hat{\theta}_{3,n}(\lambda)$ & $\hat{\theta}_{1,n}(\lambda)$ & $\hat{\theta}_{2,n}(\lambda)$ & $\hat{\theta}_{3,n}(\lambda)$ \\ \hline
& & \multicolumn{3}{c|}{} & \multicolumn{3}{c|}{} & \multicolumn{3}{c|}{} & \multicolumn{3}{c}{} \\[-3mm]
& mean & -1.9953 & 3.0075 & -0.0014 & -1.9995 & 3.0024 & -0.0011 & -1.9995 & 3.0133 & 0.0001 & -2.0023 & 3.0038 & -0.0020 \\[1mm] 
& s.d. & 0.0784 & 0.0798 & 0.0787 & 0.0354 & 0.0355 & 0.0346 & 0.0810 & 0.0831 & 0.0849 & 0.0354 & 0.0358 & 0.0363 \\[1mm] 
& & \multicolumn{3}{c|}{} & \multicolumn{3}{r|}{(time: 0.2665)} & \multicolumn{3}{c|}{} & \multicolumn{3}{r}{(time: 0.2661)} \\[1mm] \hline
%%%
& & \multicolumn{3}{c|}{} & \multicolumn{3}{c|}{} & \multicolumn{3}{c|}{} & \multicolumn{3}{c}{} \\[-3mm]
& $\lambda=0.9$ & $\hat{\theta}_{1,n}(\lambda)$ & $\hat{\theta}_{2,n}(\lambda)$ & $\hat{\theta}_{3,n}(\lambda)$ & $\hat{\theta}_{1,n}(\lambda)$ & $\hat{\theta}_{2,n}(\lambda)$ & $\hat{\theta}_{3,n}(\lambda)$ & $\hat{\theta}_{1,n}(\lambda)$ & $\hat{\theta}_{2,n}(\lambda)$ & $\hat{\theta}_{3,n}(\lambda)$ & $\hat{\theta}_{1,n}(\lambda)$ & $\hat{\theta}_{2,n}(\lambda)$ & $\hat{\theta}_{3,n}(\lambda)$ \\ \hline
& & \multicolumn{3}{c|}{} & \multicolumn{3}{c|}{} & \multicolumn{3}{c|}{} & \multicolumn{3}{c}{} \\[-3mm]
& mean & -1.9967 & 3.0086 & -0.0021 & -1.9989 & 3.0021 & -0.0010 & -1.9970 & 3.0115 & 0.0001 & -2.0006 & 3.0020 & -0.0014 \\[1mm] 
& s.d. & 0.1001 & 0.1025 & 0.0993 & 0.0448 & 0.0450 & 0.0443 & 0.1034 & 0.1063 & 0.1055 & 0.0440 & 0.0460 & 0.0468 \\[1mm] 
& & \multicolumn{3}{c|}{} & \multicolumn{3}{r|}{(time: 0.2687)} & \multicolumn{3}{c|}{} & \multicolumn{3}{r}{(time: 0.2692)} \\[1mm] \hline
\end{tabular}
}
}
\label{esti34}
\end{center}
\end{table}

%%%

\begin{figure}[t]
\begin{tabular}{c}
\includegraphics[scale=0.09]{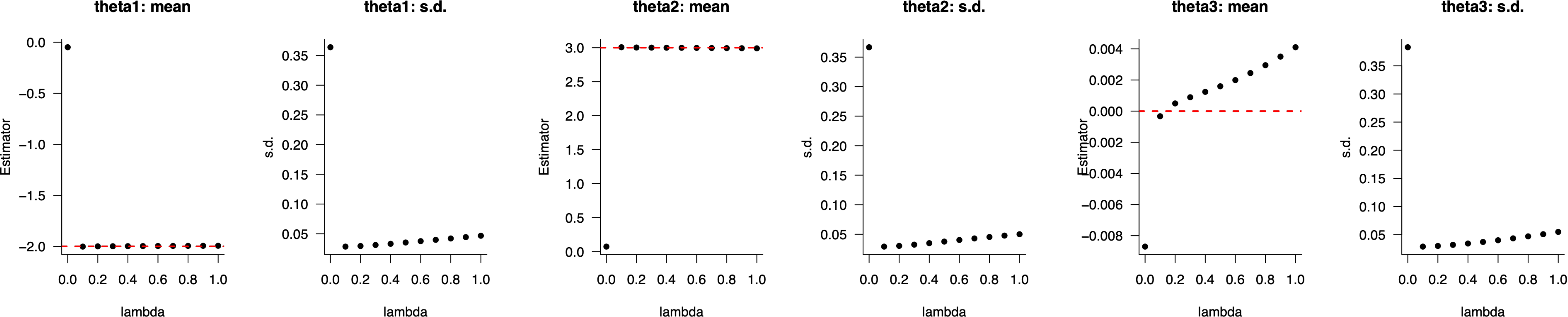}
\end{tabular}
\caption{Mean and standard deviation of the density-power estimator in each $\lambda$ in Section \ref{se:simu22} (i) ($q=0.01n, n=5000$).}
\label{estplot33}
\end{figure}

\begin{figure}[t]
\begin{tabular}{c}
\includegraphics[scale=0.09]{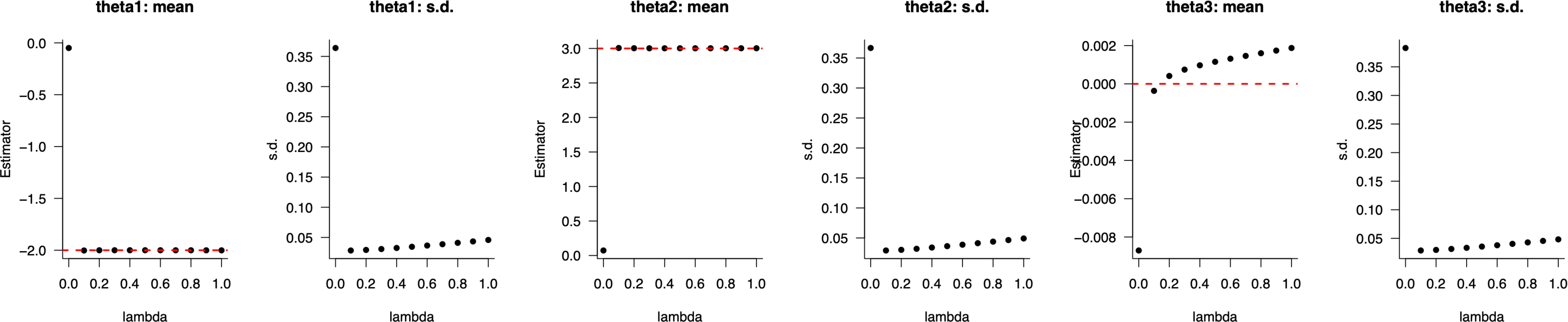}
\end{tabular}
\caption{Mean and standard deviation of the H\"older-based estimator in each $\lambda$ in Section \ref{se:simu22} (i) ($q=0.01n, n=5000$).}
\label{holestplot33}
\end{figure}

%%%

\begin{figure}[t]
\begin{tabular}{c}
\includegraphics[scale=0.09]{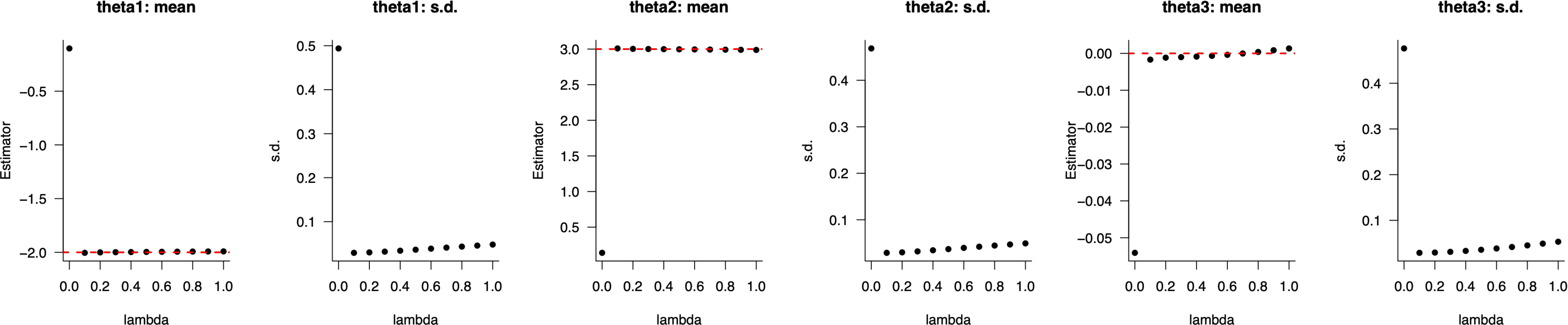}
\end{tabular}
\caption{Mean and standard deviation of the density-power estimator in each $\lambda$ in Section \ref{se:simu22} (ii) ($q=0.01n, n=5000$).}
\label{estplot35}
\end{figure}

\begin{figure}[t]
\begin{tabular}{c}
\includegraphics[scale=0.09]{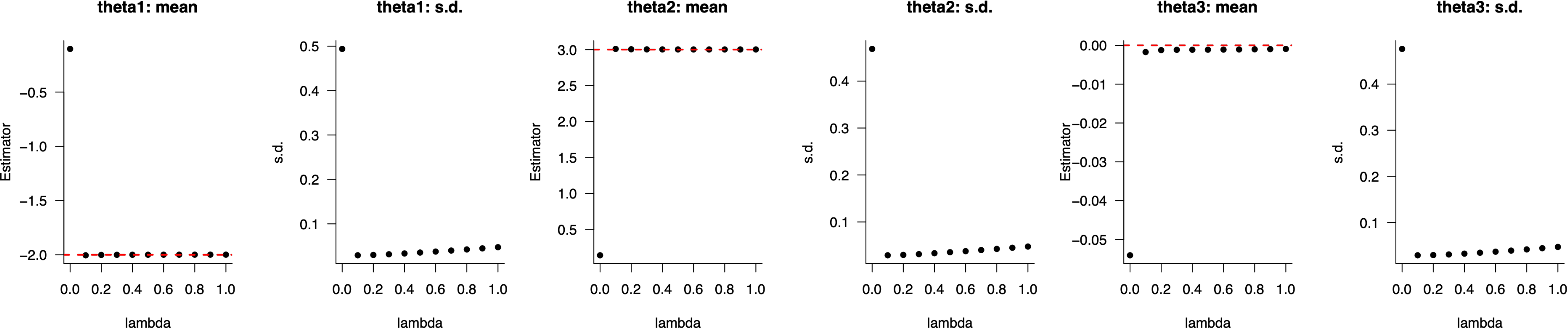}
\end{tabular}
\caption{Mean and standard deviation of the H\"older-based estimator in each $\lambda$ in Section \ref{se:simu22} (ii) ($q=0.01n, n=5000$).}
\label{holestplot35}
\end{figure}

%%%

\begin{figure}[t]
\begin{tabular}{c}

\begin{minipage}{0.32 \hsize}
\begin{center}
\includegraphics[scale=0.25]{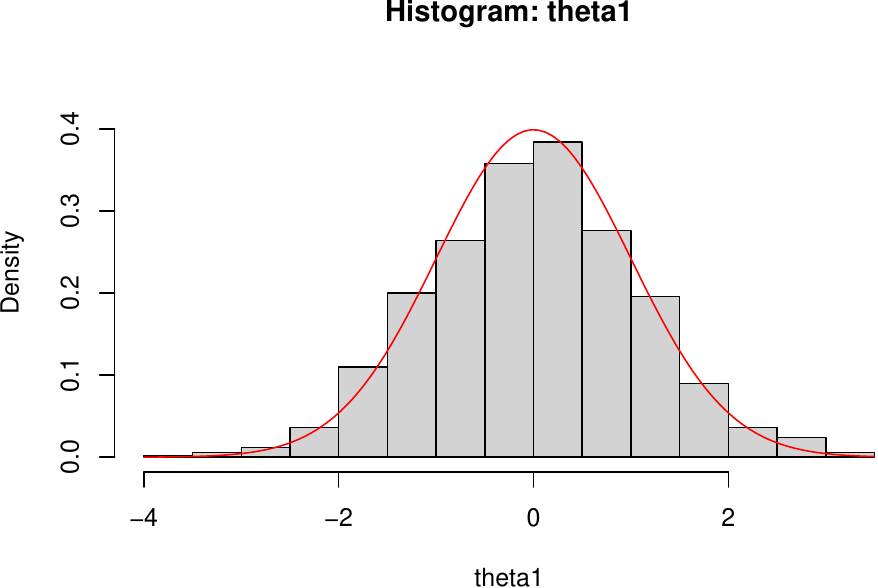}
\end{center}
\end{minipage}

\begin{minipage}{0.32 \hsize}
\begin{center}
\includegraphics[scale=0.25]{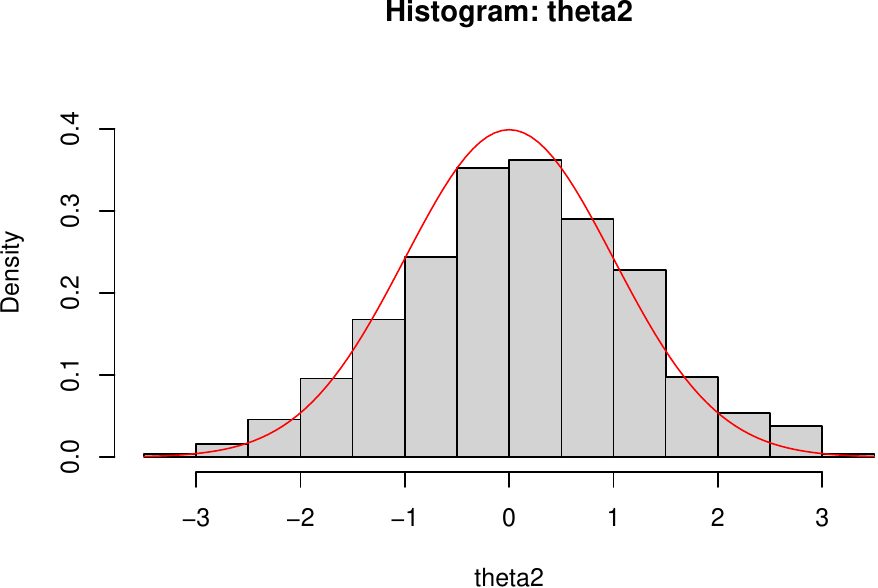}
\end{center}
\end{minipage}

\begin{minipage}{0.32 \hsize}
\begin{center}
\includegraphics[scale=0.25]{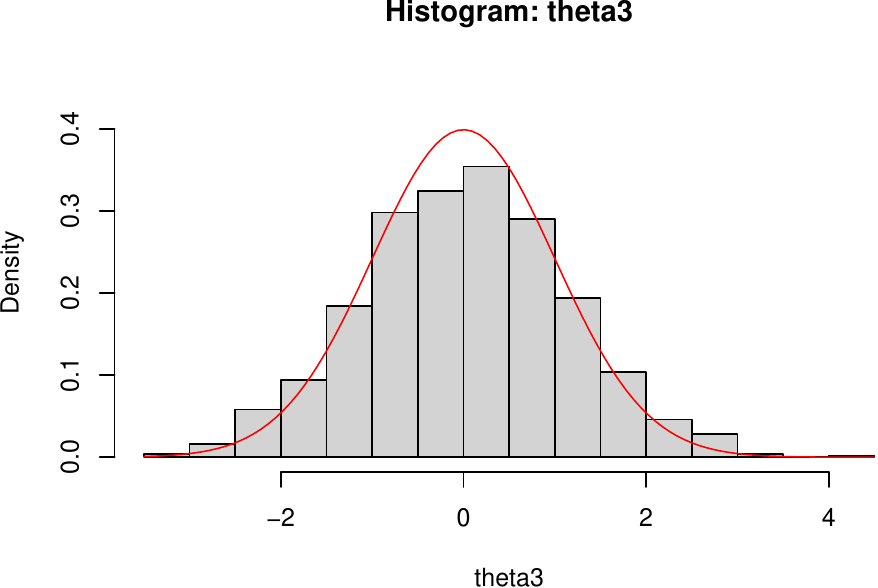}
\end{center}
\end{minipage}

\end{tabular}
\caption{Histograms of $u_{1,n}(\lambda)$, $u_{2,n}(\lambda)$, and $u_{3,n}(\lambda)$ corresponding to the density-power estimator in Section \ref{se:simu22} (i) ($q=0.01n$, $n=5000$, $\lambda=0.2$).}
\label{plot32}
\end{figure}

%%%

\begin{figure}[t]
\begin{tabular}{c}

\begin{minipage}{0.32 \hsize}
\begin{center}
\includegraphics[scale=0.25]{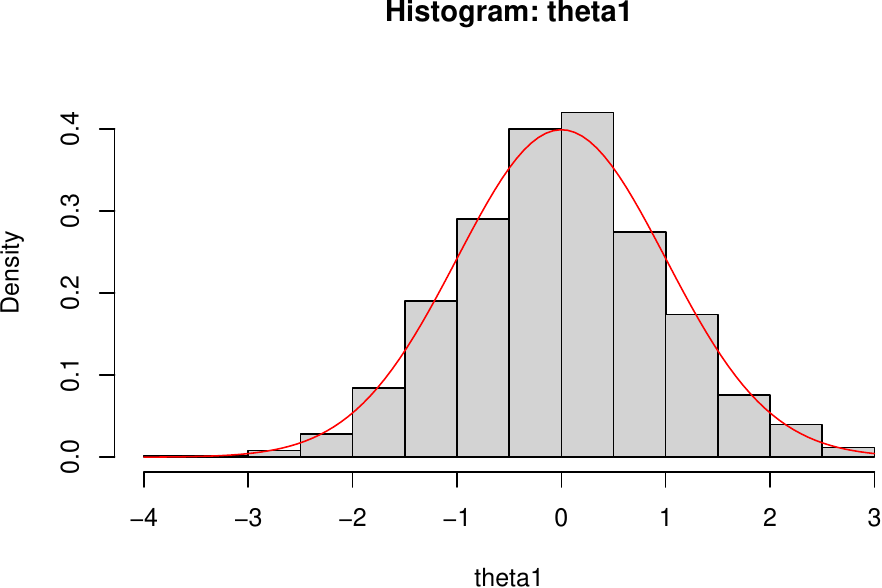}
\end{center}
\end{minipage}

\begin{minipage}{0.32 \hsize}
\begin{center}
\includegraphics[scale=0.25]{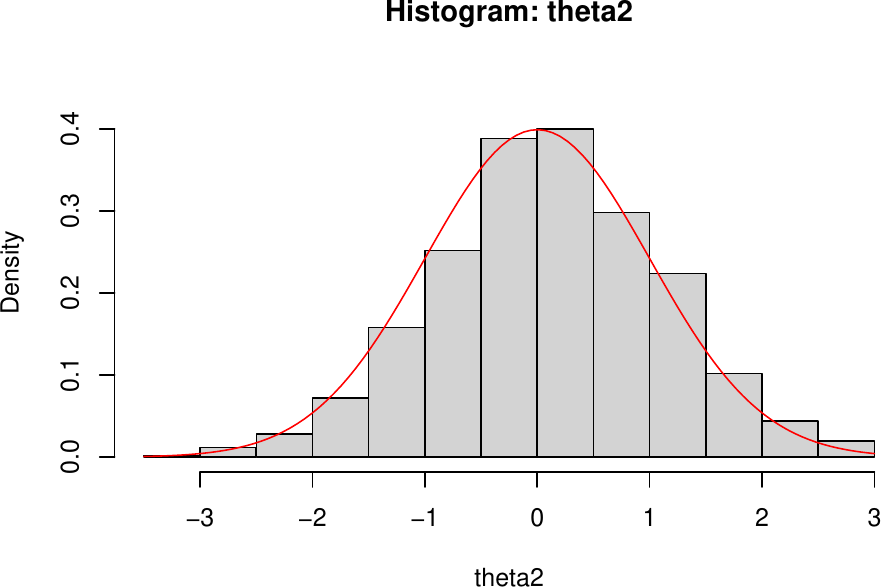}
\end{center}
\end{minipage}

\begin{minipage}{0.32 \hsize}
\begin{center}
\includegraphics[scale=0.25]{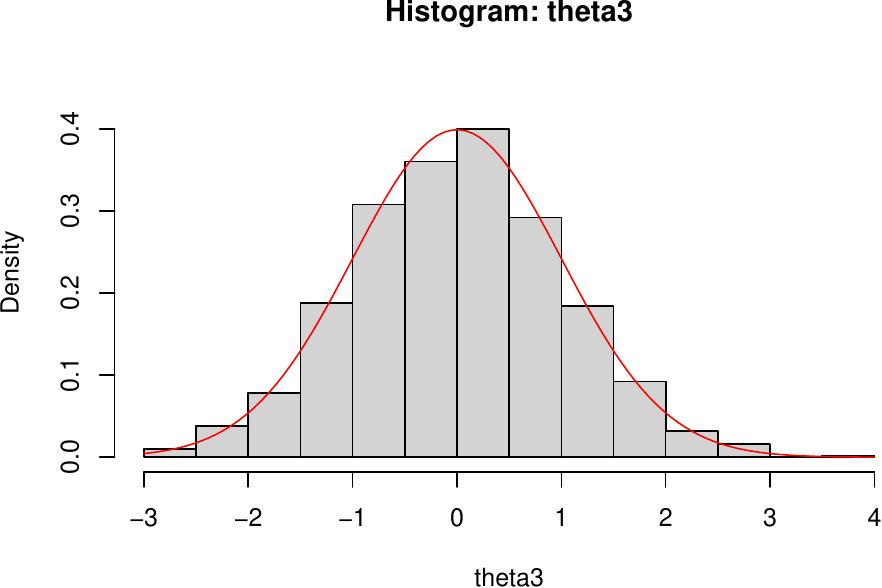}
\end{center}
\end{minipage}

\end{tabular}
\caption{Histograms of $u_{1,n}(\lambda)$, $u_{2,n}(\lambda)$, and $u_{3,n}(\lambda)$ corresponding to the H\"{o}lder-based estimator in Section \ref{se:simu22} (i) ($q=0.01n$, $n=5000$, $\lambda=0.2$).}
\label{holplot32}
\end{figure}

%%%%% 必要2? from %%%%%
% %%%

% \begin{figure}[t]
% \begin{tabular}{c}

% \begin{minipage}{0.32 \hsize}
% \begin{center}
% \includegraphics[scale=0.25]{figures/estfig32-6_nor.pdf}
% \end{center}
% \end{minipage}

% \begin{minipage}{0.32 \hsize}
% \begin{center}
% \includegraphics[scale=0.25]{figures/estfig32-7_nor.pdf}
% \end{center}
% \end{minipage}

% \begin{minipage}{0.32 \hsize}
% \begin{center}
% \includegraphics[scale=0.25]{figures/estfig32-8_nor.pdf}
% \end{center}
% \end{minipage}

% \end{tabular}
% \caption{Density-power estimator with 95\% confidence interval in each $\lambda$ in Section \ref{se:simu22} (i)($q=0.01n, n=5000$).}
% \label{estplot34}
% \end{figure}

% %%%

% \begin{figure}[t]
% \begin{tabular}{c}

% \begin{minipage}{0.32 \hsize}
% \begin{center}
% \includegraphics[scale=0.25]{figures/holestfig32-6_nor.pdf}
% \end{center}
% \end{minipage}

% \begin{minipage}{0.32 \hsize}
% \begin{center}
% \includegraphics[scale=0.25]{figures/holestfig32-7_nor.pdf}
% \end{center}
% \end{minipage}

% \begin{minipage}{0.32 \hsize}
% \begin{center}
% \includegraphics[scale=0.25]{figures/holestfig32-8_nor.pdf}
% \end{center}
% \end{minipage}

% \end{tabular}
% \caption{H\"{o}lder-based estimator with 95\% confidence interval in each $\lambda$ in Section \ref{se:simu22} (i)($q=0.01n, n=5000$).}
% \label{holestplot34}
% \end{figure}
%%%%% 必要2? to %%%%%

%%%

\begin{table}[t]
\begin{center}
\caption{Frequency that the 95\% confidence interval corresponding to the density-power estimator contains the true value in each $\lambda$ in Section \ref{se:simu22} (i) ($q=0.01n$, $n=5000$).}
%\scalebox{0.9}[0.9]{
\begin{tabular}{r | r r r r r r r r r r} \hline
& \multicolumn{9}{c}{} \\[-3mm]
$\lambda$ & 0.1 & 0.2 & 0.3 & 0.4 & 0.5 & 0.6 & 0.7 & 0.8 & 0.9 & 1 \\[1mm] \hline
& \multicolumn{9}{c}{} \\[-3mm]
$\theta_{1}$ & 0.92 & 0.94 & 0.93 & 0.93 & 0.94 & 0.94 & 0.93 & 0.93 & 0.94 & 0.94 \\[1mm]
$\theta_{2}$ & 0.90 & 0.92 & 0.92 & 0.92 & 0.92 & 0.93 & 0.93 & 0.93 & 0.93 & 0.94 \\[1mm]
$\theta_{3}$ & 0.90 & 0.92 & 0.92 & 0.92 & 0.92 & 0.93 & 0.94 & 0.94 & 0.94 & 0.94 \\[1mm] \hline
\end{tabular}
%}
\label{ciratio31}
\end{center}
\end{table}

%%%

\begin{table}[t]
\begin{center}
\caption{Frequency that the 95\% confidence interval corresponding to the H\"{o}lder-based estimator contains the true value in each $\lambda$ in Section \ref{se:simu22} (i) ($q=0.01n$, $n=5000$).}
%\scalebox{0.9}[0.9]{
\serev{
\begin{tabular}{r | r r r r r r r r r r} \hline
& \multicolumn{9}{c}{} \\[-3mm]
$\lambda$ & 0.1 & 0.2 & 0.3 & 0.4 & 0.5 & 0.6 & 0.7 & 0.8 & 0.9 & 1 \\[1mm] \hline
& \multicolumn{9}{c}{} \\[-3mm]
$\theta_{1}$ & 0.95 & 0.95 & 0.95 & 0.95 & 0.94 & 0.94 & 0.94 & 0.94 & 0.94 & 0.94 \\[1mm]
$\theta_{2}$ & 0.94 & 0.94 & 0.94 & 0.94 & 0.93 & 0.93 & 0.93 & 0.93 & 0.93 & 0.93 \\[1mm]
$\theta_{3}$ & 0.95 & 0.95 & 0.94 & 0.94 & 0.94 & 0.94 & 0.94 & 0.94 & 0.94 & 0.93 \\[1mm] \hline
\end{tabular}
}
%}
\label{holciratio31}
\end{center}
\end{table}

\subsection{Jump-diffusion process} \label{se:simu5}%\label{hm:sec_sim.dwj}

The sample data $(\ly_{t_{j}})_{j=0}^{n}$ with $t_{j}=j/n$ is obtained from 
\begin{align*}
d\ly_{t}=\ly_{t}dt+\frac{2+3Y_{t}^{\star 2}}{1+Y_{t}^{\star 2}}dw_{t}+dJ_{t}, \quad Y_{0}=0, \quad t\in[0,1],
\end{align*}
where $J$ is a compound Poisson process with intensity $q$, and the distribution representing the jump-size of the compound Poisson process is given by $\serev{\mathcal{N}}(0,3)$.
We set $Y_{t_{j}}=\ly_{t_{j}}$ and consider the following model for the estimation:
\begin{align*}
d\ly_{t}=\frac{\theta_{1}+\theta_{2}Y_{t}^{\star 2}}{1+Y_{t}^{\star 2}}dw_{t}, \qquad t\in[0,1].
\end{align*}
Then, the true parameter $\tz=(\theta_{1,0},\theta_{2,0})^{\top}=(2,3)^{\top}$.
The initial value, lower bound, and upper bound in numerical optimization are given by $5$, $0$, and $10$, respectively.
The simulations are done for $q=0.01n$, $0.05n$.
Figure \ref{pathplot4} shows one of 1000 sample paths for $q=0.01n$ and $n=5000$.
Moreover, Table \ref{esti42} summarizes the estimation results using GQMLE, density-power GQMLE and H\"{o}lder-based GQMLE.
Even in this case, we obtained that the estimation results and behaviors of density-power and H\"{o}lder-based GQMLEs are similar to those of Section \ref{se:simu1}.
%%%%%%%%%% delete from %%%%%%%%%%
% Figures \ref{estplot41} and \ref{holestplot41} show the behaviors of the density-power and H\"{o}lder-based GQMLEs in the case where $q=0.01n$ and $n=5000$.
% From these results, we can observe that the density-power and H\"{o}lder-based GQMLEs have similar trends as in Section \ref{se:simu1}.
% Table \ref{esti42} summarizes the estimation results by using the GQMLE, density-power GQMLE and H\"{o}lder-based GQMLE.
% Figures \ref{estplot41}, \ref{estplot42}, and \ref{plot41} show the behaviors of the density-power GQMLE in the case where $q=0.01n$ and $n=5000$, and Figures \ref{holestplot41} shows the behaviors of the H\"{o}lder-based GQMLE in the case where $q=0.01n$ and $n=5000$.
% Table \ref{ciratio41} shows the frequency that the 95\% confidence interval contains the true value in each $\lambda$ when $q=0.01n$ and $n=5000$.
% From these results, we can observe that the density-power and H\"{o}lder-based GQMLEs have similar trends as in Section \ref{se:simu22} (i).
%%%%%%%%%% delete to %%%%%%%%%%

\serev{
\begin{rem}
The choice of the tuning parameter $\lambda$ is important, and the optimal value may depend on the configuration considered.
Several studies have investigated the selection of $\lambda$; see, for example, \cite{WarJon05, BasBasJon21}.
In the literature on density-power divergence-based estimators, values around $0.2$ are commonly used as a tuning parameter, providing a good balance between efficiency and robustness.
\end{rem}
}

%%%%

\begin{figure}[t]
\begin{tabular}{c}
\includegraphics[scale=0.35]{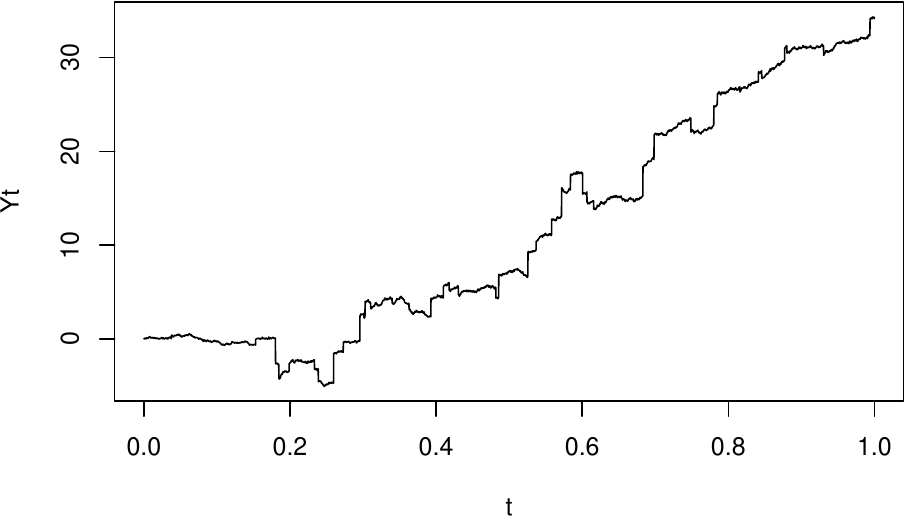}
\end{tabular}
\caption{One of 1000 sample paths in Section \ref{se:simu5} ($q=0.01n$, $n=5000$).}
\label{pathplot4}
\end{figure}

%%%

\begin{table}[t]
\begin{center}
\caption{GQMLE, Density-power GQMLE, and H\"{o}lder-based GQMLE in Section \ref{se:simu5} ($\theta_{0}=(2,3)^{\top}$). ``time'' shows the mean calculation time.}
\scalebox{0.73}[0.73]{
\serev{
\begin{tabular}{r r | r r | r r | r r | r r} \hline
& & \multicolumn{2}{c}{} & \multicolumn{2}{c|}{} & \multicolumn{2}{c}{} & \multicolumn{2}{c}{} \\[-3mm]
& & \multicolumn{4}{l|}{$q=0.01n$} & \multicolumn{4}{l}{$q=0.05n$} \\ \cline{3-10}
& & \multicolumn{2}{c|}{} & \multicolumn{2}{c|}{} & \multicolumn{2}{c|}{} & \multicolumn{2}{c}{} \\[-3mm]
& & \multicolumn{2}{l|}{$n=1000$} & \multicolumn{2}{l|}{$n=5000$} & \multicolumn{2}{l|}{$n=1000$} & \multicolumn{2}{l}{$n=5000$} \\ \hline
& & \multicolumn{2}{c|}{} & \multicolumn{2}{c|}{} & \multicolumn{2}{c|}{} & \multicolumn{2}{c}{} \\[-3mm]
\multicolumn{1}{l}{GQMLE} & & $\hat{\theta}_{1,n}$ & $\hat{\theta}_{2,n}$ & $\hat{\theta}_{1,n}$ & $\hat{\theta}_{2,n}$ & $\hat{\theta}_{1,n}$ & $\hat{\theta}_{2,n}$ & $\hat{\theta}_{1,n}$ & $\hat{\theta}_{2,n}$ \\ \hline
& & \multicolumn{2}{c|}{} & \multicolumn{2}{c|}{} & \multicolumn{2}{c|}{} & \multicolumn{2}{c}{} \\[-3mm]
& mean & 5.9430 & 6.0008 & 9.5942 & 9.9750 & 9.6044 & 9.9660 & 10.0000 & 10.0000 \\[1mm] 
& s.d. & 2.8684 & 1.6749 & 1.1607 & 0.1631 & 1.2025 & 0.2013 & 0.0000 & 0.0000 \\[1mm] 
& & \multicolumn{2}{c|}{} & \multicolumn{2}{r|}{(time: 0.4775)} & \multicolumn{2}{c|}{} & \multicolumn{2}{r}{(time: 0.4286)} \\[1mm] \hline
%%%%%
& & \multicolumn{2}{c|}{} & \multicolumn{2}{c|}{} & \multicolumn{2}{c|}{} & \multicolumn{2}{c}{} \\[-3mm]
\multicolumn{1}{l}{Density-power:} & $\lambda=0.1$ & $\hat{\theta}_{1,n}(\lambda)$ & $\hat{\theta}_{2,n}(\lambda)$ & $\hat{\theta}_{1,n}(\lambda)$ & $\hat{\theta}_{2,n}(\lambda)$ & $\hat{\theta}_{1,n}(\lambda)$ & $\hat{\theta}_{2,n}(\lambda)$ & $\hat{\theta}_{1,n}(\lambda)$ & $\hat{\theta}_{2,n}(\lambda)$ \\ \hline
& & \multicolumn{2}{c|}{} & \multicolumn{2}{c|}{} & \multicolumn{2}{c|}{} & \multicolumn{2}{c}{} \\[-3mm]
& mean & 2.0207 & 3.0405 & 2.0044 & 3.0206 & 2.1023 & 3.2017 & 2.0293 & 3.1111 \\[1mm] 
& s.d. & 0.1703 & 0.0979 & 0.1157 & 0.0382 & 0.4867 & 0.1142 & 0.2652 & 0.0573 \\[1mm]
 & & \multicolumn{2}{c|}{} & \multicolumn{2}{r|}{(time: 0.3659)} & \multicolumn{2}{c|}{} & \multicolumn{2}{r}{(time: 0.3779)} \\[1mm] \hline
%%%
& & \multicolumn{2}{c|}{} & \multicolumn{2}{c|}{} & \multicolumn{2}{c|}{} & \multicolumn{2}{c}{} \\[-3mm]
& $\lambda=0.5$ & $\hat{\theta}_{1,n}(\lambda)$ & $\hat{\theta}_{2,n}(\lambda)$ & $\hat{\theta}_{1,n}(\lambda)$ & $\hat{\theta}_{2,n}(\lambda)$ & $\hat{\theta}_{1,n}(\lambda)$ & $\hat{\theta}_{2,n}(\lambda)$ & $\hat{\theta}_{1,n}(\lambda)$ & $\hat{\theta}_{2,n}(\lambda)$ \\ \hline
& & \multicolumn{2}{c|}{} & \multicolumn{2}{c|}{} & \multicolumn{2}{c|}{} & \multicolumn{2}{c}{} \\[-3mm]
& mean & 2.0215 & 3.0236 & 2.0089 & 3.0169 & 2.0763 & 3.0966 & 2.0403 & 3.0901 \\[1mm] 
& s.d. & 0.1916 & 0.1087 & 0.1476 & 0.0424 & 0.3761 & 0.1042 & 0.2554 & 0.0576 \\[1mm]
& & \multicolumn{2}{c|}{} & \multicolumn{2}{r|}{(time: 0.3775)} & \multicolumn{2}{c|}{} & \multicolumn{2}{r}{(time: 0.3874)} \\[1mm] \hline
%%%
& & \multicolumn{2}{c|}{} & \multicolumn{2}{c|}{} & \multicolumn{2}{c|}{} & \multicolumn{2}{c}{} \\[-3mm]
& $\lambda=0.9$ & $\hat{\theta}_{1,n}(\lambda)$ & $\hat{\theta}_{2,n}(\lambda)$ & $\hat{\theta}_{1,n}(\lambda)$ & $\hat{\theta}_{2,n}(\lambda)$ & $\hat{\theta}_{1,n}(\lambda)$ & $\hat{\theta}_{2,n}(\lambda)$ & $\hat{\theta}_{1,n}(\lambda)$ & $\hat{\theta}_{2,n}(\lambda)$ \\ \hline
& & \multicolumn{2}{c|}{} & \multicolumn{2}{c|}{} & \multicolumn{2}{c|}{} & \multicolumn{2}{c}{} \\[-3mm]
& mean & 2.0323 & 3.0294 & 2.0169 & 3.0243 & 2.1198 & 3.1256 & 2.0763 & 3.1272 \\[1mm] 
& s.d. & 0.2276 & 0.1256 & 0.1755 & 0.0478 & 0.4682 & 0.1165 & 0.2897 & 0.0615 \\[1mm] 
& & \multicolumn{2}{c|}{} & \multicolumn{2}{r|}{(time: 0.4023)} & \multicolumn{2}{c|}{} & \multicolumn{2}{r}{(time: 0.4423)} \\[1mm] \hline
%%%%%
& & \multicolumn{2}{c|}{} & \multicolumn{2}{c|}{} & \multicolumn{2}{c|}{} & \multicolumn{2}{c}{} \\[-3mm]
\multicolumn{1}{l}{H\"{o}lder-based:} & $\lambda=0.1$ & $\hat{\theta}_{1,n}(\lambda)$ & $\hat{\theta}_{2,n}(\lambda)$ & $\hat{\theta}_{1,n}(\lambda)$ & $\hat{\theta}_{2,n}(\lambda)$ & $\hat{\theta}_{1,n}(\lambda)$ & $\hat{\theta}_{2,n}(\lambda)$ & $\hat{\theta}_{1,n}(\lambda)$ & $\hat{\theta}_{2,n}(\lambda)$ \\ \hline
& & \multicolumn{2}{c|}{} & \multicolumn{2}{c|}{} & \multicolumn{2}{c|}{} & \multicolumn{2}{c}{} \\[-3mm]
& mean & 2.0189 & 3.0393 & 2.0023 & 3.0191 & 2.0895 & 3.1945 & 2.0193 & 3.1031 \\[1mm] 
& s.d. & 0.1699 & 0.0979 & 0.1193 & 0.0382 & 0.4991 & 0.1137 & 0.2722 & 0.0571 \\[1mm]
& & \multicolumn{2}{c|}{} & \multicolumn{2}{r|}{(time: 0.2610)} & \multicolumn{2}{c|}{} & \multicolumn{2}{r}{(time: 0.2960)} \\[1mm] \hline
%%%
& & \multicolumn{2}{c|}{} & \multicolumn{2}{c|}{} & \multicolumn{2}{c|}{} & \multicolumn{2}{c}{} \\[-3mm]
& $\lambda=0.5$ & $\hat{\theta}_{1,n}(\lambda)$ & $\hat{\theta}_{2,n}(\lambda)$ & $\hat{\theta}_{1,n}(\lambda)$ & $\hat{\theta}_{2,n}(\lambda)$ & $\hat{\theta}_{1,n}(\lambda)$ & $\hat{\theta}_{2,n}(\lambda)$ & $\hat{\theta}_{1,n}(\lambda)$ & $\hat{\theta}_{2,n}(\lambda)$ \\ \hline
& & \multicolumn{2}{c|}{} & \multicolumn{2}{c|}{} & \multicolumn{2}{c|}{} & \multicolumn{2}{c}{} \\[-3mm]
& mean & 2.0107 & 3.0152 & 1.9962 & 3.0075 & 2.0106 & 3.0520 & 1.9891 & 3.0412 \\[1mm] 
& s.d. & 0.1924 & 0.1105 & 0.1483 & 0.0428 & 0.3119 & 0.1040 & 0.2459 & 0.0571 \\[1mm]
& & \multicolumn{2}{c|}{} & \multicolumn{2}{r|}{(time: 0.2351)} & \multicolumn{2}{c|}{} & \multicolumn{2}{r}{(time: 0.2731)} \\[1mm] \hline
%%%
& & \multicolumn{2}{c|}{} & \multicolumn{2}{c|}{} & \multicolumn{2}{c|}{} & \multicolumn{2}{c}{} \\[-3mm]
& $\lambda=0.9$& $\hat{\theta}_{1,n}(\lambda)$ & $\hat{\theta}_{2,n}(\lambda)$ & $\hat{\theta}_{1,n}(\lambda)$ & $\hat{\theta}_{2,n}(\lambda)$ & $\hat{\theta}_{1,n}(\lambda)$ & $\hat{\theta}_{2,n}(\lambda)$ & $\hat{\theta}_{1,n}(\lambda)$ & $\hat{\theta}_{2,n}(\lambda)$ \\ \hline
& & \multicolumn{2}{c|}{} & \multicolumn{2}{c|}{} & \multicolumn{2}{c|}{} & \multicolumn{2}{c}{} \\[-3mm]
& mean & 2.0057 & 3.0138 & 1.9923 & 3.0070 & 2.0061 & 3.0434 & 1.9795 & 3.0375 \\[1mm] 
& s.d. & 0.2680 & 0.1375 & 0.1894 & 0.0513 & 0.4720 & 0.1233 & 0.3187 & 0.0625 \\[1mm]
& & \multicolumn{2}{c|}{} & \multicolumn{2}{r|}{(time: 0.2345)} & \multicolumn{2}{c|}{} & \multicolumn{2}{r}{(time: 0.2635)} \\[1mm] \hline
\end{tabular}
}
}
\label{esti42}
\end{center}
\end{table}

\subsection{Clustering} \label{se:simu6}

% In this section, although we do not verify the theoretical foundation, we consider the procedure for clustering into jump(spike) and continuous parts using one of 1000 sample datasets obtained in Sections \ref{se:simu12} and \ref{se:simu22} (i).
In this section, we consider the procedure for clustering into jump (or spike) and continuous parts using one sample \serev{dataset} selected from the 1000 independent \serev{datasets} generated in Sections \ref{se:simu12} and \ref{se:simu22} (i).
The primary purpose here is to propose a practical method, although we do not investigate the theoretical properties.
We use the cases $\sigma^{2}=1$, $\mathfrak{p}=0.01$, and $n=5000$ for the data in Section \ref{se:simu12} and $q=0.01n$ and $n=5000$ for the data in Section \ref{se:simu22} (i).
The clustering procedure is as follows.
\begin{itemize}
\item First, we compute the residuals
\begin{align*}
\hat{\epsilon}_{j}=h^{-1/2}\sigma_{j-1}^{-1}\big(\hat{\theta}_{n}(\lambda)\big)\D_{j}Y
\end{align*}
for $j=1,2,\ldots,n$.
In this simulation, $\sigma(\theta)=\sigma(\theta_{1},\theta_{2},\theta_{3})=\exp\big\{(\theta_{1}X_{1,t}+\theta_{2}X_{2,t}+\theta_{3}X_{3,t})/2\big\}$.

\item Next, we apply the $K$-means clustering ($K\geq2$) to $\{|\hat{\epsilon}_{j}|\}_{j=1,2,\ldots,n}$. We denote the clusters $\kappa_{n}^{1}$, $\kappa_{n}^{2}$, $\ldots$, $\kappa_{n}^{K}$ in ascending order of the number of elements in the cluster.

\item \serev{
% Our clustering approach is based on differences in the observed data. Since we are considering finite-activity jumps and spikes, their number should be much smaller compared with $n$, hence, we regard the cluster with the largest number of elements as the continuous component. The remaining clusters are classified as the jump components, which are separated according to their respective magnitudes (or levels).
Our clustering approach is based on differences in the observed data. Since we assume finite-activity jumps and spikes, their total number is expected to be much smaller than $n$. Consequently, we regard the cluster with the largest number of elements as the continuous component. The remaining clusters are classified as the jump components, which are separated according to their respective magnitudes (or levels).
Specifically,} we set $\mathfrak{C}_{n}=\kappa_{n}^{K}$ and $\mathfrak{D}_{n}=\kappa_{n}^{1}\sqcup\kappa_{n}^{2}\sqcup\cdots\sqcup\kappa_{n}^{K-1}$, respectively. Then, $\{|\hat{\epsilon}_{j}|\}_{j=1,2,\ldots,n}=\mathfrak{C}_{n}\sqcup \mathfrak{D}_{n}$, and $\mathfrak{C}_{n}$ and $\mathfrak{D}_{n}$ are regarded as the continuous and jump parts, respectively.
% ; that is, we regarded the members of the largest cluster as the continuous parts.

\item \serev{In case of spike contamination (Section \ref{se:simu12}), if a spike occurs at time $t_{j}$, no spikes typically occur at $t_{j-1}$ or $t_{j+1}$; then, both $|\Delta_{j}Y|$ and $|\Delta_{j+1}Y|$, which are used for the clustering, tend to be large. 
In this case, although the correct classification assigns $j$ to $\mathfrak{D}_{n}$ and $j+1$ to $\mathfrak{C}_{n}$, this clustering tends to classify both $j$ and $j+1$ into $\mathfrak{D}_{n}$.
Hence, for the spike data, we reassign $j+1$ to $\mathfrak{C}_{n}$ if $j\in \mathfrak{D}_{n}$ and $j+1\in \mathfrak{D}_{n}$.}

\end{itemize}

To select the number of clusters $K$, we run the $K$-means for several $K$ and observe the number of elements in $\mathfrak{D}_{n}$.
The detection of the jump components is insufficient when $K$ is too small; on the other hand, even the continuous components are detected as a jump when $K$ is too large.
Therefore, we consider the value of $K$ when the number of elements in $\mathfrak{D}_{n}$ suddenly increases to the point at which continuous components start to be misclassified as jumps; that is, if the number of elements in $\mathfrak{D}_{n}$ changes significantly when $K=k_{0}$, we set $K=k_{0}-1$ for the actual clustering of $K$-means.

\serev{The clustering simulations are conducted for the density-power GQMLE with $\lambda=0.1,0.2,0.5$, and $0.9$.
Since the elements classified into $\mathfrak{C}_n$ and $\mathfrak{D}_n$ do not change significantly with respect to $\lambda$, we present only the results for $\lambda = 0.2$.}
Figure \ref{cluster1} shows the logarithm of the number of elements in $\mathfrak{D}_{n}$ when the $K$-means is applied for each of $K=2,3,\ldots,15$.
The left one is the case of data in Section \ref{se:simu12}, and the right one is the case of data in \ref{se:simu22} (i).
From Figure \ref{cluster1}, the number of elements in $\mathfrak{D}_{n}$ increases abruptly at $K=5$ and $K=7$.
Therefore, we use the 4-means for the clustering of the data in Section \ref{se:simu12} and the 6-means for the clustering of the data in Section \ref{se:simu22} (i).
Figure \ref{cluster2} shows the path of data used for clustering with the results of $K$-means.
The left one is the case of 4-means for data in Section \ref{se:simu12}, and the right one is the case of 6-means for data in Section \ref{se:simu22} (i).
The cross mark points in Figure \ref{cluster2} mean the elements included in $\mathfrak{D}_{n}$.
In the case of 4-means for data in Section \ref{se:simu12}, the proportion of correctly identified noise points is $29/39=0.74$.
Moreover, in the case of 6-means for data in Section \ref{se:simu22} (i), the intensity of the compound Poisson process is $q=0.01,n=50$, while the number of elements of $\mathfrak{D}_{n}$ is 38.
The simulation results for $\lambda=0.1, 0.5$, and $0.9$ are provided in Appendix \ref{suppl_2}.

%%%%

\begin{figure}[t]
\begin{tabular}{c}

\begin{minipage}{0.45 \hsize}
\begin{center}
\includegraphics[scale=0.33]{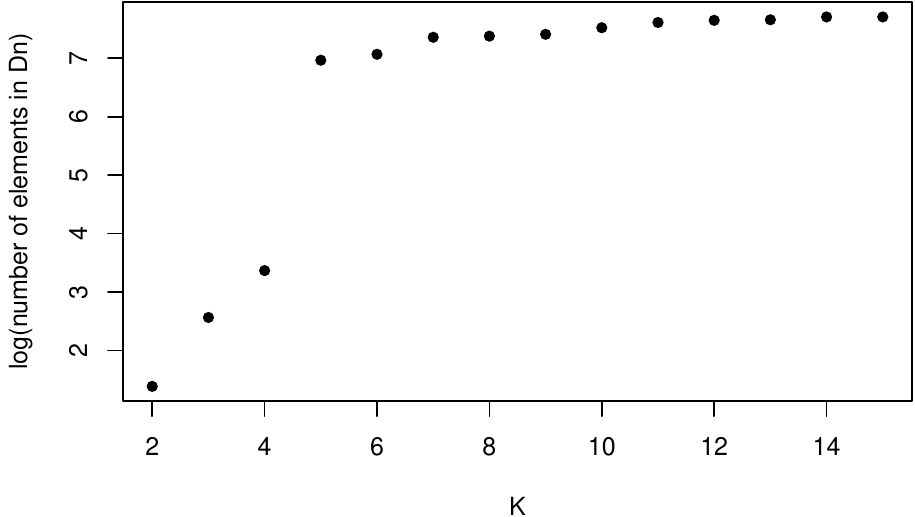}
\end{center}
\end{minipage}

\begin{minipage}{0.45 \hsize}
\begin{center}
\includegraphics[scale=0.33]{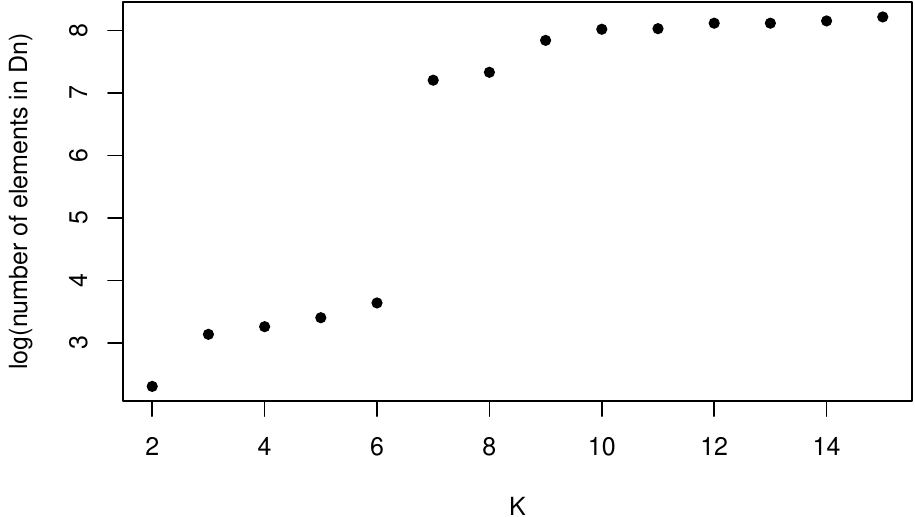}
\end{center}
\end{minipage}

\end{tabular}
\caption{Logarithmic of the number of elements in $\mathfrak{D}_{n}$ in $K$-means for each $K$. The data in Section \ref{se:simu12} is used for the left one, and the data in Section \ref{se:simu22} (i) is used for the right one ($\lambda=0.2$).}
\label{cluster1}
\end{figure}

%%%

\begin{figure}[t]
\begin{tabular}{c}

\begin{minipage}{0.45 \hsize}
\begin{center}
\includegraphics[scale=0.33]{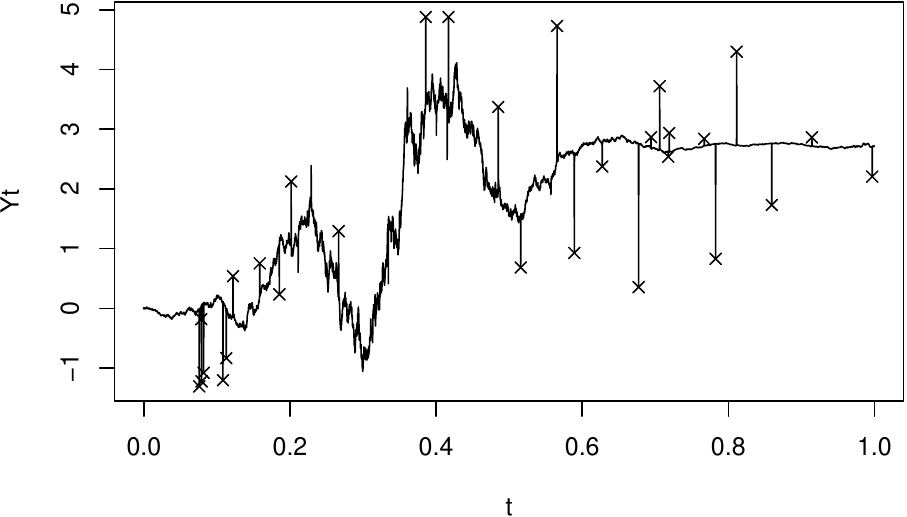}
\end{center}
\end{minipage}

\begin{minipage}{0.45 \hsize}
\begin{center}
\includegraphics[scale=0.33]{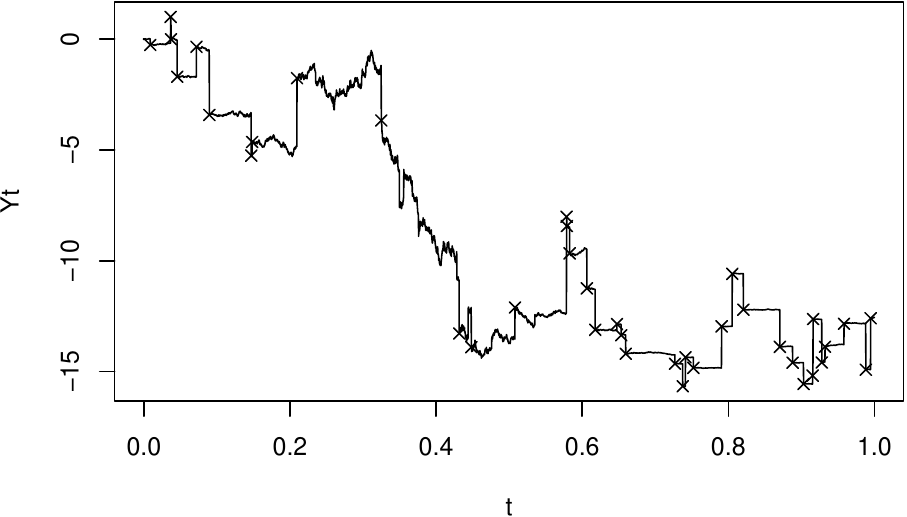}
\end{center}
\end{minipage}

\end{tabular}
\caption{The path of data used for clustering with the results of $K$-means ($\lambda=0.2$). The cross mark points mean elements of $\mathfrak{D}_{n}$. The 4-means for data in Section \ref{se:simu12} are used for the left one, and the 6-means for data in Section \ref{se:simu22} (i) are used for the right one.}
\label{cluster2}
\end{figure}

%%%%%
%%%%%

\appendix

\section{Auxiliary asymptotics}
\label{hm:sec_aux.asymp}

Throughout this section, we will work under Assumptions \ref{hm:A_diff.coeff} to \ref{hm:A_lam}.

%%%%%
\subsection{A class of random functions}
\label{hm:sec_class.U}

To deal with the density-power GQLF \eqref{hm:def_dp-H-0} and the H\"{o}lder-based GQLF \eqref{hm:def_ldp-H-0} in a unified manner, we temporarily consider a class of random functions of the form
\begin{equation}\label{hm:U_def}
    U_n(\theta;\lam) := \sumj \zeta(X_{t_{j-1}},\nY_j,\theta;\lam)=:\sumj \zeta_j(\theta;\lam),
\end{equation}
where the function $(x,y,\theta) \mapsto \zeta(x,y,\theta;\lam)$ is smooth enough for each $\lam\in(0,\overline{\lam}]$ to satisfy the following conditions:
\begin{align}
    & \max_{0\le k\le 3}\sup_{(y,\theta)}\left|\p_\theta^k \zeta(x,y,\theta;\lam)\right| \lesssim \frac{1}{\lam}(1+|x|^C),
    \label{hm:zeta-A1}\\
    & \max_{0\le k\le 3}\sup_{(y,\theta)}\left| \p_y^m \p_\theta^k \zeta(x,y,\theta;\lam)\right| \lesssim \lam^{m/2 -1} (1+|x|^C),
    \qquad m=1,2,
    \label{hm:zeta-A2}\\
    & \max_{0\le k\le 3}\sup_{(y,\theta)}\left| \p_y^m \p_\theta^k \zeta(x,y,\theta;\lam)\right| \lesssim (1+|x|^C)(1+|y|^C),
    \qquad m\ge 0,
    \label{hm:zeta-A3}\\
    % & \text{The function $y \mapsto \p_y\p_\theta^k \zeta(x,y,\theta;\lam)$ is odd for each $(x,\theta,\lam)$ and $k\le 4$.}
    % \label{hm:zeta-A4}\\
    & \text{The function $y \mapsto \p_\theta^k \zeta(x,y,\theta;\lam)$ is even for each $(x,\theta,\lam)$ and $k\le 4$.}
    \label{hm:zeta-A4}
\end{align}
These will be used subsequently in different ways.

To proceed, let us introduce the following more concise notation:
\begin{align}
    \mbbh_n(\theta;\lam) &= \sumj \beta_{j-1}(\theta;\lam) \left(\frac{1}{\lam} \vp_j(\theta)^\lam -K_{\lam,d}\right),
    \label{hm:def_dp-H-0--}\\
    \mbbh_n(\theta;\lam) &= \sumj \gam_{j-1}(\theta;\lam) \, \frac{1}{\lam} \vp_j(\theta)^\lam,
    \label{hm:def_ldp-H-0--}
\end{align}
for \eqref{hm:def_dp-H-0} and \eqref{hm:def_ldp-H-0}, respectively, 
where % $K_\lam:=K_{\lam,d}$ and
\begin{align}
    \beta_{j-1}(\theta;\lam) &:= \mathsf{d}_{j-1}(\theta)^{-\lam/2},
    \nn\\
    \gam_{j-1}(\theta;\lam) &:= \mathsf{d}_{j-1}(\theta)^{-\lam/(2(\lam+1))},
    \nn\\
    \vp_j(\theta) &:= \phi\big(S_{j-1}(\theta)^{-1/2}\nY_j\big).
\end{align}
We abbreviate ``$\sup_{(\lam,\theta)\in(0,\overline{\lam}]\times\overline{\Theta}}$'' as ``$\sup_{\lam,\theta}$''.

\begin{lem}\label{hm:lem_dp.ldp_U.verify}
Both the density-power GQLF and the H\"{o}lder GQLF belong to the class \eqref{hm:U_def}:
the four properties \eqref{hm:zeta-A1} to \eqref{hm:zeta-A4} are fulfilled by $U_n(\theta;\lam)=\mbbh_n(\theta;\lam)$ for both \eqref{hm:def_dp-H-0} and \eqref{hm:def_ldp-H-0}.
\end{lem}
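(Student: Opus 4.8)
The plan is to treat this as a purely deterministic calculus verification on the two integrand functions, since the conditions \eqref{hm:zeta-A1}--\eqref{hm:zeta-A4} make no reference to the probabilistic structure or to the good events $G_j$. First I would read off $\zeta$ from \eqref{hm:def_dp-H-0--} and \eqref{hm:def_ldp-H-0--}: in both cases
$$\zeta(x,y,\theta;\lam)=\rho(x,\theta;\lam)\,\frac1\lam\,\phi\big(S(x,\theta)^{-1/2}y\big)^{\lam}+r(x,\theta;\lam),$$
where $\rho=\mathsf{d}(x,\theta)^{-\lam/2}$ and $r=-\mathsf{d}(x,\theta)^{-\lam/2}K_{\lam,d}$ for the density-power case, while $\rho=\mathsf{d}(x,\theta)^{-\lam/(2(\lam+1))}$ and $r\equiv 0$ for the H\"older case. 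The decisive structural observation is that $\phi(S^{-1/2}y)^{\lam}=(2\pi)^{-d\lam/2}\exp(-\tfrac\lam2\,y^{\top}S^{-1}y)$, so every $\p_y$- and $\p_\theta$-derivative of $\zeta$ is a finite sum of terms of the shape (power of $\lam$) $\times$ (polynomial in $y$ with $\mathrm{poly}(x)$ coefficients) $\times\,\phi(S^{-1/2}y)^{\lam}$.

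Second, I would record the two ingredients furnished by Assumption \ref{hm:A_diff.coeff}: the polynomial growth $\sup_\theta|\p_x^k\p_\theta^l S(x,\theta)|\lesssim(1+|x|)^{C}$ and the ellipticity lower bound $\inf_\theta\lam_{\min}(S(x,\theta))\gtrsim(1+|x|)^{-c_S'}$. Via Cramer's rule and the usual derivative formulas these transfer to two-sided control of $\lam_{\max}(S)$, $\lam_{\max}(S^{-1})$, $\mathsf{d}^{\pm1}$ and all their $\theta$-derivatives by powers of $(1+|x|)$; in particular the prefactors $\rho$, $r$ and their $\theta$-derivatives are $\lesssim(1+|x|)^{C}$ uniformly in $\lam\in(0,\overline{\lam}]$ (each $\p_\theta$ hitting $\mathsf{d}^{-a(\lam)}$ only produces a bounded power of $\lam$). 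The analytic engine is then the quantitative ``Gaussian absorbs polynomials'' estimate
$$\sup_{y}|y|^{q}\,\phi\big(S(x,\theta)^{-1/2}y\big)^{\lam}\lesssim \lam^{-q/2}(1+|x|)^{C},$$
which follows from $y^{\top}S^{-1}y\ge\lam_{\max}(S)^{-1}|y|^2\gtrsim(1+|x|)^{-C}|y|^2$ together with the elementary scaling $\sup_{t\ge0}t^{q/2}e^{-at}\asymp a^{-q/2}$ with $a\asymp\lam(1+|x|)^{-C}$.

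Third, I would run the Leibniz and Fa\`a di Bruno bookkeeping. Each $\p_\theta$ applied to $\phi^{\lam}$ supplies a factor $\lam\times(y^{\top}(\p_\theta S^{-1})y)$, hence up to $\lam^{k}$ and $y$-degree $2k$ after $k$ of them; each $\p_y$ supplies $\lam\times(S^{-1}y)$, so $m$ of them contribute at least $\lam^{\lceil m/2\rceil}$. Combining with the displayed sup-estimate: for \eqref{hm:zeta-A1} ($m=0$) the count is $\frac1\lam\cdot\lam^{k}\cdot\lam^{-k}=\frac1\lam$, giving $\tfrac1\lam(1+|x|^{C})$ (the $y$-independent $r$-term being $\lesssim(1+|x|)^{C}\lesssim\tfrac1\lam(1+|x|^{C})$ since $\lam\le\overline{\lam}$); for \eqref{hm:zeta-A2} the $m$ extra $y$-derivatives each net a factor $\lam^{1/2}$ relative to the $m=0$ baseline $\lam^{-1}$ (one power of $\lam$ from differentiation against one power of $\lam^{-1/2}$ from the extra $|y|$ in the sup), producing exactly $\lam^{m/2-1}$ for $m=1,2$; for \eqref{hm:zeta-A3} I would simply bound $\phi^{\lam}\le1$ and $\lam\le\overline{\lam}$, keeping the polynomial in $y$ as $(1+|y|^{C})$, noting that for $m\ge1$ the prefactor $\tfrac1\lam$ is killed by the $\lam$ supplied by $y$-differentiation (the $m=0$ instance being the content of \eqref{hm:zeta-A1}). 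Finally \eqref{hm:zeta-A4} is immediate: $\phi(S^{-1/2}y)^{\lam}$ depends on $y$ only through the even quadratic form $y^{\top}S^{-1}y$, and each $\p_\theta$ inserts a further even quadratic factor, so $y\mapsto\p_\theta^{k}\zeta$ is even for every $k\le4$; the $r$-term is $y$-independent. The H\"older case is handled verbatim, the exponent $\lam/(2(\lam+1))$ being only easier to control since it stays bounded on $(0,\overline{\lam}]$.

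The main obstacle is the uniform-in-$\lam$ exponent bookkeeping of the third step, in particular checking that the powers of $\lam$ gained from differentiation and lost to the Gaussian sup cancel \emph{exactly} to $\tfrac1\lam$ in \eqref{hm:zeta-A1} and to $\lam^{m/2-1}$ in \eqref{hm:zeta-A2}, with no leftover positive or negative power of $\lam$ that would spoil uniformity as $\lam\downarrow0$; everything else is routine multivariate calculus backed by the ellipticity bounds of Assumption \ref{hm:A_diff.coeff}.
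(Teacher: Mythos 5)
Your proposal is correct and follows essentially the same route as the paper's proof: the same reduction to the product structure (an $(x,\theta)$-prefactor controlled via Assumption \ref{hm:A_diff.coeff}) times $\frac{1}{\lam}\phi\big(S(x,\theta)^{-1/2}y\big)^{\lam}$, the same two-sided polynomial-growth/ellipticity bounds, and the same elementary estimate $\sup_{t\ge 0}t^{\al}e^{-ct}\lesssim c^{-\al}$ applied to $|\sqrt{\lam}\,y|^{2}$ to trade powers of $y$ for negative powers of $\lam$, with identical exponent bookkeeping yielding $\lam^{-1}$ for \eqref{hm:zeta-A1} and $\lam^{m/2-1}$ for \eqref{hm:zeta-A2}. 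Your handling of \eqref{hm:zeta-A3} via essential boundedness of $\phi^{\lam}$ and of \eqref{hm:zeta-A4} via evenness in $y$ likewise coincides with the paper's argument (including the same mild looseness at $k=m=0$ in \eqref{hm:zeta-A3}, which the paper also glosses over).
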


\begin{proof}
We begin with \eqref{hm:zeta-A1}. By Assumption \ref{hm:A_diff.coeff},
\begin{align}\label{hm:pre.cal-1}
    & \max_{1\le l\le 4}\sup_{\lam,\theta} \left|\frac{1}{\lam}\p_\theta^l \beta_{j-1}(\theta;\lam)\right| 
    + \max_{1\le l\le 4}\sup_{\lam,\theta} \left|\frac{1}{\lam}\p_\theta^l \gam_{j-1}(\theta;\lam)\right| 
    \nn\\
    &{}\qquad + \max_{0\le m\le 4}\sup_\theta \left|\p_\theta^m (S^{-1})_{j-1}(\theta)\right| \lesssim 1+|X_{t_{j-1}}|^C.
\end{align}
% Obviously, we have $\sup_{\lam,\theta}|\vp_j(\theta)|\lesssim 1$.
Direct computations using \eqref{hm:pre.cal-1} inductively show that for $k\ge 1$,
\begin{align}
\sup_{\theta} \left|\p_\theta^k \left(\vp_j(\theta)^\lam\right) \right|
&\lesssim \lambda(1+|X_{t_{j-1}}|^C) \bigg(1+\sum_{l=1}^{k} \lambda^{l-1} |\nY_j|^{2l}\bigg) \vp_j(\theta)^\lambda 
\nn\\
&\lesssim (1+|X_{t_{j-1}}|^C) \bigg(\lam+\sum_{l=1}^{k}(|\sqrt{\lam}\,\nY_j|^2)^l \bigg)
\nn\\
&{}\qquad \times 
\exp\left(- C' (1+|X_{t_{j-1}}|)^{-c_S'}\,|\sqrt{\lam}\,\nY_j|^2\right).
\label{hm:pre.cal-2}
\end{align}
The following elementary inequality is valid for $c>0$ and $\al\ge 0$ ($0^0:=1$):
\begin{equation}\label{hm:elementary.ineq}
\sup_{x\ge 0} \, x^{\al} e^{-cx} \le \al^\al e^{-\al} c^{-\al}.
\end{equation}
Applying \eqref{hm:elementary.ineq} with $x=|\sqrt{\lam}\,\nY_j|^2$ to the upper bound in \eqref{hm:pre.cal-2}, we get
\begin{equation}\label{hm:deriv.estimate}
    \sup_{\lam,\theta} 
    \left| \p_\theta^k \left(\vp_j(\theta)^\lam\right) \right|
    % \left| \p_\theta^k \left(\frac{1}{\lam} \vp_j(\theta)^\lam\right) \right|
    \lesssim 1+|X_{t_{j-1}}|^C
\end{equation}
for $k\ge 0$; the case of $k=0$ is trivial. Using \eqref{hm:pre.cal-1} and \eqref{hm:deriv.estimate}, it is straightforward to verify \eqref{hm:zeta-A1}. 

Turning to \eqref{hm:zeta-A2}, we may and do suppose that all the involved random variables are one-dimensional. Direct calculations lead to the expression
\begin{align}
    \p_\theta^k \left(\vp_j(\theta)^\lam\right)
    &= \mathsf{C}_\lam \vp_j(\theta)^\lam \sum_{l=1}^{k} A_{l,j-1}^{(1)}(\theta) \left(|\sqrt{\lam}\,\nY_j|^2\right)^l,
\end{align}
and then
\begin{align}
    \p_y \p_\theta^k \left(\vp_j(\theta)^\lam\right) 
    &= \mathsf{C}_\lam \vp_j(\theta)^\lam \sum_{l=1}^{k} \left( A_{l,j-1}^{(2)}(\theta) \lam^{l+1} \,\nY_j^{2l+1} 
    + A_{l,j-1}^{(3)}(\theta) \lam^{l} \,\nY_j^{2l-1}\right).
\end{align}
Here, we generically denoted by $\mathsf{C}_\lam$ a positive constant depending on $\lam$ such that $\sup_\lam \mathsf{C}_\lam <\infty$, and by $A_l^{(m)}(x,\theta)$ ($l\ge 1$; $m=1,2,3$) sufficiently smooth functions satisfying that $\sup_\theta |\p_\theta^k A_l^{(m)}(x,\theta)| \lesssim 1+|x|^C$. 
Hence, by \eqref{hm:elementary.ineq},
\begin{align}
    \left|\p_y \p_\theta^k \left(\vp_j(\theta)^\lam\right) \right|
    &\lesssim \sqrt{\lam}\, (1+|X_{t_{j-1}}|^C).
\end{align}
Similarly, we get
\begin{align}
    \left|\p_y^2 \p_\theta^k \left(\vp_j(\theta)^\lam\right) \right|
    &\lesssim \lam\, (1+|X_{t_{j-1}}|^C).
\end{align}
Using the last two estimates, we can verify \eqref{hm:zeta-A2} for both \eqref{hm:def_dp-H-0} and \eqref{hm:def_ldp-H-0}; in particular, it follows that
\begin{equation}
    \left|\p_y^m \p_\theta^k U_n(\theta;\lam)\right| 
    \lesssim \lam^{m/2 -1} \sumj (1+|X_{t_{j-1}}|^C).
\end{equation}

The property \eqref{hm:zeta-A3} is easily seen from the essential boundedness of $\vp_j(\theta)$. Finally, \eqref{hm:zeta-A4} is trivial. The proof is complete.
\end{proof}

\medskip

In the rest of this section and Sections \ref{hm:sec_robustification} and \ref{hm:sec_lim.thm}, we work with the random function \eqref{hm:U_def} satisfying the properties \eqref{hm:zeta-A1} to \eqref{hm:zeta-A4}. 
To decompose $U_n(\theta;\lam)$ into leading and negligible parts, we need the stochastic expansions of $\nY_j$ and $X_{t_{j-1}}$ on the event $G_j$; recall the definition \eqref{hm:def_Gj}. 
% We will write for $f=\sig,S$,
% \begin{equation}
%  f_t=f(X_t,\tz), \qquad f_{j-1}=f(X_{t_{j-1}},\tz).
% \end{equation}
% % $S_t=S(X_t,\tz)$ and $S_{j-1}=S(X_{t_{j-1}},\tz)$.
% Also, $\sig^\star_{t} := \sig(X^\star_{t_{j-1}},\tz)$.
Let us write $\sig^\star_{j-1} = \sig(X^\star_{t_{j-1}},\tz)$ and $\p_\theta^k \sig^\star_{j-1} = \p_\theta^k\sig(X^\star_{t_{j-1}},\tz)$ ($k\ge 1$).
By \eqref{hm:DY_Gj}, we have
\begin{align}
    \D_j Y 
    &= \sig^\star_{j-1}\D_j w + \int_j \mu_s ds 
    + \int_j (\sig(X^\star_{s-},\tz) - \sig^\star_{j-1} )dw_s
    \nn
\end{align}
on $G_j$. By expanding $\sig(X^\star_{s-},\tz) - \sig^\star_{j-1}$ and noting that $\lx_s-\lx_{t_{j-1}}=\cx_s-\cx_{t_{j-1}}$ ($s\in\mbbi_j$) on $G_j$, we can write
\begin{align}\label{hm:dp-const-5}
\nY_j &= \sig^\star_{j-1} Z_j %+ \sqrt{h}\,\mu_{j-1} 
+ \sqrt{h}\, \overline{R}_{j}
\end{align}
% the second-order Taylor expansion of $\sig_{s-}^\star - \sig^\star_{j-1}$, we can write
% \begin{align}\label{hm:dp-const-5}
% \nY_j &= \sig^\star_{j-1} Z_j %+ \sqrt{h}\,\mu_{j-1} 
% + \sqrt{h}\, \overline{R}_{j},
% \end{align}
on $G_j$, where
\begin{equation}\label{hm:Rj_form}
    % \overline{R}_j = \mu_{j-1} + \del_{\mu,j} + \hmprev{\del_{\sig,1,j}} + \sqrt{h}\,\hmprev{\del_{\sig,2,j}}
    \overline{R}_j = \mu_{j-1} + \del_{\mu,j} + \del_{\sig,1,j} + \sqrt{h}\,\del_{\sig,2,j}
\end{equation}
with
\begin{align}
    \del_{\mu,j} &:= \frac1h \int_j (\mu_s-\mu_{j-1})ds,
    \nn\\
    % \hmprev{\del_{\sig,1,j}} 
    \del_{\sig,1,j} &:= \frac{1}{\sqrt{h}}\int_j \frac{1}{\sqrt{h}} (\p_x\sig^\star_{j-1})[\cx_s - \cx_{t_{j-1}}] dw_s,
    \label{hm:def_del.sig.1} \\
    % \hmprev{\del_{\sig,2,j}} 
    \del_{\sig,2,j} &:= \frac{1}{\sqrt{h}}\int_j 
    \bigg(\int_0^1\!\!\int_0^1 v\p_x^2\sig\big(\lx_{t_{j-1}}+uv(\cx_s - \cx_{t_{j-1}});\tz \big)dudv \bigg)
    \nn\\
    &{}\qquad\qquad \left[\left\{\frac{1}{\sqrt{h}}(\cx_s-\cx_{t_{j-1}})\right\}^{\otimes 2}\right] dw_s.
    \label{hm:def_ny_R}
\end{align}
By Assumption \ref{hm:A_drif.coeff&X} and \eqref{hm:X-moment.estimates}, we immediately get for any $K>0$,
\begin{equation}\label{hm:ol.dels_moment}
    \sup_{n}\max_{j\le n} \E\left[ |\del_{\mu,j}|^K 
    % + |\hmprev{\del_{\sig,1,j}}|^K 
    % + |\hmprev{\del_{\sig,2,j}}|^K ;\,G_j \right] 
    + |\del_{\sig,1,j}|^K 
    + |\del_{\sig,2,j}|^K ;\,G_j \right] 
    < \infty.
\end{equation}
It follows that
\begin{equation}\label{hm:ol.R_moment}
    \sup_{n}\max_{j\le n} \E\left[ |\overline{R}_j|^K ;\,G_j\right] < \infty.
\end{equation}
The expression \eqref{hm:Rj_form} will be used later in estimating several remainder terms.
% By \eqref{hm:X.obs_Gj},
% \begin{equation}\label{hm:X-sde_Gj}
% \ox_t = \lx_{t_{j-1}} + \int_{t_{j-1}}^t \mu'_s ds + \int_{t_{j-1}}^t \sig'_{s-}dw'_s.
% % \qquad t\in\overline{\mbbi_j},
% % \qquad t\in(t_{j-1},t_j).    
% \end{equation}
% % \begin{equation}
% % \lx_t = \lx_{t_{j-1}} + \int_{t_{j-1}}^t \mu'_{s-} ds + \int_0^t \sig'_{0,s-}dw_s + \int_0^t \sig'_{1,s-}d\check{w}_{s}
% % \end{equation}
% % for $t\in\overline{\mbbi_j}$ and some adapted processe $\sig'_{0,\cdot}$ and $\sig'_{1,\cdot}$ taking values in $\mbbr^{d'}\otimes\mbbr^r$ and $\mbbr^{d'}\otimes\mbbr^{\check{r}}$, respectively (recall that $w'=(w,\check{w})$)

% To further compactify the notation, we will write, %$f_{j-1}^\star(\theta) = f(\lx_{t_{j-1}};\theta)$ 
% \begin{equation}
%     f_{j-1}^\star(\theta) = f(\lx_{t_{j-1}};\theta),
%     \qquad
%     f_{j-1}^\star = f(\lx_{t_{j-1}};\tz),
%     \qquad
%     f_{t}^\star = f(\lx_{t},\tz)
% \end{equation}
% for any measurable function $f:\,\mbbr^{d'}\times\overline{\Theta}$.
%% $\D_j Y = \D_j \check{Y}$ and $X_{t_{j-1}} = \check{X}_{t_{j-1}}$
Write 
% $\zeta^\star_j(\theta;\lam)=\zeta(\lx_{t_{j-1}},\sig^\star_{j-1}Z_j,\theta;\lam)$. 
\begin{equation}
    \zeta^\star_j(\theta;\lam)=\zeta(\lx_{t_{j-1}},\sig^\star_{j-1}Z_j,\theta;\lam).
\end{equation}
Since $X_{t_{j-1}}=\lx_{t_{j-1}}$ on $G_j$, we have
\begin{align}
    U_n(\theta;\lam) 
    &= \sumj \zeta_j(\theta;\lam)I_{G_j} +\sumj \zeta_j(\theta;\lam)I_{G_j^c}
    \nn\\
    &= \sumj \zeta(\lx_{t_{j-1}},\sig^\star_{j-1} Z_j + \sqrt{h}\, \overline{R}_{j},\theta;\lam) I_{G_j} +\sumj \zeta_j(\theta;\lam)I_{G_j^c}
    \nn\\
    &= \sumj \zeta^\star_j(\theta;\lam) + \sumj \left\{\zeta_j(\theta;\lam) - \zeta^\star_j(\theta;\lam) \right\}I_{G_j^c}
    \nn\\
    &{}\qquad + \sumj \left\{\zeta(\lx_{t_{j-1}},\sig^\star_{j-1} Z_j + \sqrt{h}\, \overline{R}_{j},\theta;\lam) - \zeta^\star_j(\theta;\lam)\right\}I_{G_j}
    \nn\\
    &=: U^\star_n(\theta;\lam) + U_{n}^{\ve,1}(\theta;\lam) + U_{n}^{\ve,2}(\theta;\lam).
\end{align}
In Section \ref{hm:sec_robustification} below, we will show that both $U_n^{\ve,1}(\theta;\lam)$ and $U_n^{\ve,2}(\theta;\lam)$ are asymptotically negligible uniformly in $\theta$ in the sense that \begin{align}\label{hm:disconti.removed}
    % \max_{0\le k\le 3}
    \sup_\theta \left|\frac{1}{\sqrt{n}}\p_\theta^k U_{n}^{\ve,1}(\theta;\lam)\right| + \sup_\theta \left|\frac{1}{\sqrt{n}}\p_\theta^k U_{n}^{\ve,2}(\theta;\lam)\right| = o_p(1)
\end{align}
for $k=0,1,2,3$ under Assumption \ref{hm:A_lam}. 
% The normalizing factor $\sqrt{n}$ will be important for handling the quasi-score functions.
Then, Section \ref{hm:sec_lim.thm} will present some asymptotic properties of $U^\star_n(\theta;\lam)$ with non-trivial limits.

%%%%%
\subsection{Removing discontinuity}
\label{hm:sec_robustification}

This section is devoted to proving \eqref{hm:disconti.removed}, the negligibility of the ``contamination'' terms. We will separately show
\begin{align}
    & \sup_\theta \left|\frac{1}{\sqrt{n}}\p_\theta^k U_{n}^{\ve,1}(\theta;\lam)\right| = o_p(1),
    \label{hm:disconti.removed-part.1}\\
    & \sup_\theta \left|\frac{1}{\sqrt{n}}\p_\theta^k U_{n}^{\ve,2}(\theta;\lam)\right| = o_p(1).
    \label{hm:disconti.removed-part.2}
\end{align}

%%%
\subsubsection{Proof of \eqref{hm:disconti.removed-part.1}}

By \eqref{hm:zeta-A1}, \eqref{hm:A_J-1}, \eqref{hm:A_spike-1}, and \eqref{hm:X_m.bound}, we have
\begin{align}
    & \E\left[\sup_\theta \left|\frac{1}{\sqrt{n}}\p_\theta^k U_{n}^{\ve,1}(\theta;\lam)\right|\right]
    \nn\\
    &\lesssim \frac1n \sumj \sqrt{n} \, \E\left[ \sup_\theta \,|\p_\theta^k \zeta_j(\theta;\lam)| + \sup_\theta \,|\p_\theta^k \zeta^\star_j(\theta;\lam)|;\, G_j^c\right] \nn\\
    % &\lesssim \frac1n \sumj \E\left[ 1 + |\lx_{t_{j-1}}|^C + |\ox_{t_{j-1}}|^C;\, G_j^c\right] \frac{\sqrt{n}}{\lam} \nn\\
    &\lesssim \frac1n \sumj \E\left[ (1 + |\lx_{t_{j-1}}|^C + |\ox_{t_{j-1}}|^C)\, \pr^{j-1}[G_j^c] \right] \frac{\sqrt{n}}{\lam} \nn\\
    &\lesssim \frac1n \sumj \E\left[1 + |\lx_{t_{j-1}}|^C + |\ox_{t_{j-1}}|^C\right] \frac{\sqrt{n}\,h^\kappa}{\lam} \nn\\
    &\lesssim \frac{\sqrt{n}\,h^\kappa}{\lam}.
\end{align}
The upper bound goes to $0$ as $n\to\infty$ under \eqref{hm:lam.condition-2}, concluding \eqref{hm:disconti.removed-part.1}. 
% \serev{\qed}

%%%
\subsubsection{Proof of \eqref{hm:disconti.removed-part.2}}
\label{hm:sec_disc.remove-subsec.2}

% We turn to $U_{n}^{\ve,2}(\theta;\lam)$.
Let
% $\zeta^\star_{j-1}(y,\theta;\lam):=\zeta(\lx_{t_{j-1}},y,\theta;\lam)$.
\begin{equation}
    \zeta^\star_{j-1}(y,\theta;\lam):=\zeta(\lx_{t_{j-1}},y,\theta;\lam)
\end{equation}
for brevity. Then,
\begin{align}
    & \frac{1}{\sqrt{n}}\p_\theta^k U_{n}^{\ve,2}(\theta;\lam)
    \nn\\
    &= \frac{T}{n} \sumj I_{G_j} \p_y\p_\theta^k \zeta^\star_{j-1}(\sig^\star_{j-1}Z_j,\theta;\lam)[\overline{R}_j] \nn\\
    &{}\qquad + \frac{T}{\sqrt{n}} \frac1n \,\sumj I_{G_j} 
    \int_0^1\!\!\int_0^1 v\p_y^2\zeta^\star_{j-1}\left(\sig^\star_{j-1}Z_j+uv\sqrt{h}\overline{R}_j,\theta;\lam\right)dudv [\overline{R}_j^{\otimes 2}] \nn\\
    &=: \overline{U}^{\ve,2}_{1,k,n}(\theta;\lam) + \overline{U}^{\ve,2}_{2,k,n}(\theta;\lam).
    \label{hm:disconti.removed-1}
\end{align}
We have $\sup_\theta |\overline{U}^{\ve,2}_{2,k,n}(\theta;\lam)| = O_p(n^{-1/2})=o_p(1)$ since, by \eqref{hm:zeta-A2} and \eqref{hm:ol.R_moment},
\begin{equation}
    \sup_\theta|\overline{U}^{\ve,2}_{2,k,n}(\theta;\lam)| 
    \lesssim \frac{1}{\sqrt{n}} \frac1n \sumj I_{G_j} (1+|\lx_{t_{j-1}}|^C)|I_{G_j}\overline{R}_j|^2
    =O_p\left(\frac{1}{\sqrt{n}}\right).
\end{equation}
As for $\overline{U}^{\ve,2}_{1,k,n}(\theta;\lam)$, we recall the expression \eqref{hm:Rj_form} of $\overline{R}_j$. 
Let
\begin{equation}\label{hm:def_s.star}
    \mathsf{g}^\star_{k,j}(\theta;\lam) := \p_y\p_\theta^k \zeta^\star_{j-1}(\sig^\star_{j-1}Z_j,\theta;\lam).
\end{equation}
We have the decomposition
\begin{align}
    \overline{U}^{\ve,2}_{1,k,n}(\theta;\lam) = \overline{U}^{\ve,2(1)}_{1,k,n}(\theta;\lam) + \overline{U}^{\ve,2(2)}_{1,k,n}(\theta;\lam) + \overline{U}^{\ve,2(3)}_{1,k,n}(\theta;\lam),
\end{align}
where
\begin{align}
    \overline{U}^{\ve,2(1)}_{1,k,n}(\theta;\lam)
    &:= \frac{T}{n} \sumj I_{G_j} \mathsf{g}^\star_{k,j}(\theta;\lam)\big[\mu_{j-1} + \del_{\sig,1,j}\big],
    \nn\\
    \overline{U}^{\ve,2(2)}_{1,k,n}(\theta;\lam)
    &:= \frac{T}{n} \sumj I_{G_j} \mathsf{g}^\star_{k,j}(\theta;\lam)[\del_{\mu,j}],
    \nn\\
    \overline{U}^{\ve,2(3)}_{1,k,n}(\theta;\lam)
    &:= \frac{T}{n} \sumj I_{G_j} \mathsf{g}^\star_{k,j}(\theta;\lam)\big[\sqrt{h}\,\del_{\sig,2,j} \big].
    \nn
\end{align}
Applying the Cauchy-Schwarz inequality with 
\eqref{hm:zeta-A3}, \eqref{hm:ol.dels_moment}, and \eqref{hm:nY_moment}, we get
\begin{align}
    \frac{1}{h} \sup_{\theta,\lam}\big| \overline{U}^{\ve,2(3)}_{1,k,n}(\theta;\lam) \big|^2
    & \lesssim 
    \frac{1}{n} \sumj I_{G_j}\sup_{\theta,\lam}|\mathsf{g}^\star_{k,j}(\theta;\lam)|^2 
    \times \frac{1}{n} \sumj I_{G_j}|\del_{\sig,2,j}|^2
    \label{hm:CS.apply.ex}\\
    &\lesssim \frac{1}{n} \sumj I_{G_j} (1+|\lx_{t_{j-1}}|^C)(1+|\nY_j|^C)
    \times \frac{1}{n} \sumj I_{G_j}|\del_{\sig,2,j}|^2
    \nn\\
    &=O_p(1)\times O_p(1) = O_p(1).
\end{align}
This concludes that $\sup_{\theta,\lam}\big| \overline{U}^{\ve,2(3)}_{1,k,n}(\theta;\lam) \big| = O_p(\sqrt{h})$. 
Likewise,
\begin{align}
    \sup_{\theta} \big|\overline{U}^{\ve,2(2)}_{1,k,n}(\theta;\lam)\big|^2
    &\lesssim O_p(1) \times \frac{1}{n} \sumj I_{G_j}|\del_{\mu,j}|^2
    \nn\\
    &\lesssim O_p(1) \times \frac{1}{n} \sumj \frac{1}{h}\int_j \big| I_{G_j}(\mu_s - \mu_{t_{j-1}}) \big|^2 ds = o_p(1),
\end{align}
where the last equality immediately follows from Assumption \ref{hm:A_drif.coeff&X}.
% \begin{equation}
%     \max_{j \le n} \sup_{t\in\mbbi_j^\circ}\E\left[ |\mu_s - \mu_{t_{j-1}}|^2 ;\, G_j \right] = o(1).
% \end{equation}

It remains to look at $\overline{U}^{\ve,2(1)}_{1,k,n}(\theta;\lam)$. 
To deal with the $w'$-functional part in its summands, we rewrite $I_{G_j}=1-I_{G_j^c}$ and first observe that
\begin{align}
    & \sup_{\theta,\lam}\left|\frac{T}{n} \sumj I_{G_j^c}\, \mathsf{g}^\star_{k,j}(\theta;\lam)[\mu_{j-1} + \del_{\sig,1,j}]\right|^2 \nn\\
    &\lesssim \frac1n \sumj 
    \sup_{\theta,\lam}\left| \mathsf{g}^\star_{k,j}(\theta;\lam)[\mu_{j-1} + \del_{\sig,1,j}] \right|^2
    \times \frac1n\sumj I_{G_j^c}
    \nn\\
    &= O_p(1) \times O_p(h^\kappa) = O_p(h^\kappa) = o_p(1).
\end{align}
Hence,
\begin{equation}\label{hm:del.sig-0}
    \sup_{\theta,\lam}\left| \overline{U}^{\ve,2(1)}_{1,k,n}(\theta;\lam)
    - \frac{T}{n} \sumj \mathsf{g}^\star_{k,j}(\theta;\lam)[\mu_{j-1}]
    - \frac{T}{n} \sumj \mathsf{g}^\star_{k,j}(\theta;\lam)[\del_{\sig,1,j}] \right| = o_p(1).
\end{equation}
The randomness of $\mathsf{g}^\star_{k,j}(\theta;\lam)$ solely comes through $\lx_{t_{j-1}}$ and $\sig^\star_{j-1}Z_j$. 
In view of the definition \eqref{hm:def_s.star} and \eqref{hm:zeta-A4}, the mapping $Z_j \mapsto \mathsf{g}^\star_{k,j}(\theta;\lam)$ is a.s. odd, implying that
\begin{equation}\label{hm:s.star_ce0}
    \E\left[\p_\theta^m \mathsf{g}^\star_{k,j}(\theta;\lam) \right] = 0\qquad \text{a.s. for $k\le 3$ and $m=0,1$.}
\end{equation}

We note the following version of Sobolev's inequality (for example, see \cite{Ada73}): since we are assuming that $\Theta$ is a convex domain, for any $\mcc^1(\Theta)$-function $F:\,\Theta\to \mbbr^m$ (for some $m\ge 1$) and any $K > p$, 
\begin{align}\label{hm:sobolev.ineq}
\sup_{\theta}|F(\theta)|^K \le \mathsf{C}_{\Theta,K} \left( \int_{\Theta}|F(\theta)|^K d\theta + \int_{\Theta}|\p_\theta F(\theta)|^K d\theta \right),
\end{align}
where the constant $\mathsf{C}_{\Theta,K}>0$ only depends on $\Theta$ and $K$.
Thanks to \eqref{hm:s.star_ce0}, we have
\begin{align}
    \frac{T}{n} \sumj \p_\theta^k \mathsf{g}^\star_{k,j}(\theta;\lam)[\mu_{j-1}]
    &= \frac{T}{n} \sumj \left( \p_\theta^k \mathsf{g}^\star_{k,j}(\theta;\lam) - \E^{j-1}\left[\p_\theta^k \mathsf{g}^\star_{k,j}(\theta;\lam)\right]\right)[\mu_{j-1}]
\end{align}
for $k=0,1$. 
For each $\theta$, the Burkholder inequality for martingale difference arrays ensures that the right-hand side is $O_p(n^{-1/2})$. An application of \eqref{hm:sobolev.ineq} with \eqref{hm:s.star_ce0} then concludes that the stochastic order is valid uniformly in $(\theta,\lam)$, resulting in
\begin{equation}\label{hm:del.sig-1}
    \sup_{\theta,\lam}\left| \frac{T}{n} \sumj \mathsf{g}^\star_{k,j}(\theta;\lam)[\mu_{j-1}] \right| = o_p(1).
\end{equation}

In view of \eqref{hm:del.sig-0}, we are left to proving
\begin{equation}\label{hm:del.sig-2}
    \sup_{\theta,\lam}\left| \frac{T}{n} \sumj \mathsf{g}^\star_{k,j}(\theta;\lam)[\del_{\sig,1,j}] \right| = o_p(1).
\end{equation}
% To deal with the last term involving $\del_{\sig,1,j}$ (recall the definition \eqref{hm:def_del.sig.1}), 
To this end, we need a finer expression of $\del_{\sig,1,j}$ (recall the definition \eqref{hm:def_del.sig.1}):
\begin{align}
    \del_{\sig,1,j} 
    &= \sqrt{h} \times \frac{1}{\sqrt{h}}\int_j 
    \p_x\sig^\star_{j-1} \left[\frac1h \int_{t_{j-1}}^s \mu'_u du \right] dw_s
    \nn\\
    &{}\qquad + \frac{1}{\sqrt{h}}\int_j \p_x\sig^\star_{j-1}
    \left[\frac{1}{\sqrt{h}} \int_{t_{j-1}}^s (\sig'_u - \sig'_{j-1}) dw'_u \right] dw_s
    \nn\\
    &{}\qquad + \frac{1}{\sqrt{h}}\int_j \p_x\sig^\star_{j-1}
    \left[ \sig'_{j-1} \frac{1}{\sqrt{h}}(w'_s - w'_{t_{j-1}}) \right] dw_s
    \nn\\
    &=: \del_{\sig,1,j}^{(1)} + \del_{\sig,1,j}^{(2)}+ \del_{\sig,1,j}^{(3)}.
\end{align}
Here again, we may and do suppose that all the involved random variables are one-dimensional (both $w$ and $w^\dagger$ are one-dimensional). 

Obviously, \eqref{hm:zeta-A3} gives
\begin{align}
    & \sup_{\theta,\lam}\left| \frac{T}{n} \sumj \mathsf{g}^\star_{k,j}(\theta;\lam)[\del_{\sig,1,j}^{(1)}] \right| 
    \nn\\
    &\le \frac{T}{n} \sumj \sup_{\theta,\lam}\left|\mathsf{g}^\star_{k,j}(\theta;\lam)\right||\del_{\sig,1,j}^{(1)}|
    = O_p(\sqrt{h}) = o_p(1).
    \label{hm:del1-esti}
\end{align}
Next, through the Cauchy-Schwarz inequality as in \eqref{hm:CS.apply.ex} we get
\begin{align}
    & \sup_{\theta,\lam}\left| \frac{T}{n} \sumj \mathsf{g}^\star_{k,j}(\theta;\lam)[\del_{\sig,1,j}^{(2)}] \right|^2 
    \nn\\
    & \lesssim 
    \left(\frac{1}{n} \sumj |\p_x\sig^\star_{j-1}|^2 \sup_{\theta,\lam}|\mathsf{g}^\star_{k,j}(\theta;\lam)|^2 \right)
    \nn\\
    &{}\qquad 
    \times \left\{\frac{1}{n} \sumj \left|
    \frac{1}{\sqrt{h}}\int_j 
    \left[\frac{1}{\sqrt{h}} \int_{t_{j-1}}^s (\sig'_u - \sig'_{j-1}) dw'_u \right] dw_s
    \right|^2\right\}
    \label{hm:del1-esti+1}
\end{align}
The first part $(\dots)$ in the above display is $O_p(1)$. 
By applying the Burkholder inequality twice, the expectation of the second part $\{\dots\}$ can be bounded by a constant multiple of
\begin{align}
    % & \E\left[\frac1n\sumj \left|
    % \frac{1}{\sqrt{h}}\int_{t_{j-1}}^{t_j} 
    % \left[\frac{1}{\sqrt{h}} \int_{t_{j-1}}^s (\sig'_u - \sig'_{j-1}) dw'_u \right] dw_s
    % \right|^2\right] \nn\\
    % &\lesssim 
    & \frac1n\sumj \frac1h \int_j
    \E\left[ \left| 
    \frac{1}{\sqrt{h}} \int_{t_{j-1}}^s (\sig'_u - \sig'_{j-1}) dw'_u 
    \right|^2\right] ds
    \nn\\
    &\lesssim 
    \frac1n\sumj \frac1h \int_j 
    \frac1h \int_{t_{j-1}}^s \E\left[|\sig'_u - \sig'_{j-1}|^2\right] du ds = o(1),
\end{align}
where the last step is due to Assumption \ref{hm:A_drif.coeff&X}.
Thus, the left-hand side of \eqref{hm:del1-esti+1} turns out to be $o_p(1)$, 
% The same estimate as in \eqref{hm:del1-esti} holds when we replace $\del_{\sig,1,j}^{(1)}$ with $\del_{\sig,1,j}^{(2)}$.
and \eqref{hm:del.sig-2} can be concluded if we show
\begin{equation}\label{hm:del.sig-2-3}
    \sup_{\theta,\lam}\left| \frac{T}{n} \sumj \mathsf{g}^\star_{k,j}(\theta;\lam)[\del^{(3)}_{\sig,1,j}] \right| = O_p\left(\frac{1}{\sqrt{n}}\right).
\end{equation}
The term inside the absolute value sign is a functional of $\lx_{t_{j-1}}$ and the increments $\{w'_t-w'_s:\, t,s\in\mbbi_j\}$. 
Thanks to the definition \eqref{hm:def_s.star}, the condition \eqref{hm:zeta-A4}, the symmetry $\mcl(w')=\mcl(-w')$, and the self-renewing property of $w'$ (see \cite[Theorem I.32]{Pro04}), 
% Since the increments $w'_t-w'_s$ and $-(w'_t-w'_s)$ have the same distribution for $s,t\in\mbbi_j$
we have $\E^{j-1}[\mathsf{g}^\star_{k,j}(\theta;\lam)[\del^{(3)}_{\sig,1,j}]]=-\E^{j-1}[\mathsf{g}^\star_{k,j}(\theta;\lam)[\del^{(3)}_{\sig,1,j}]]$ a.s., hence
\begin{equation}
    \E^{j-1}\left[
    \mathsf{g}^\star_{k,j}(\theta;\lam)[\del^{(3)}_{\sig,1,j}]
    \right]
    = 0 \qquad \text{a.s.}
\end{equation}
This, combined with the Sobolev inequality argument used in proving \eqref{hm:del.sig-1} gives \eqref{hm:del.sig-2-3}.

Combining \eqref{hm:del.sig-0}, \eqref{hm:del.sig-1}, and \eqref{hm:del.sig-2} now yields
\begin{equation}
    \sup_{\theta,\lam} \left| \overline{U}^{\ve,2(1)}_{1,k,n}(\theta;\lam)\right| = o_p(1),
\end{equation}
followed by the desired \eqref{hm:disconti.removed-part.2}. 
% \serev{\qed}

%%%%%
%%%%%
\subsection{Basic limit theorems}
\label{hm:sec_lim.thm}

In this section, we will present limit theorems for the ``leading'' term
\begin{equation}\label{hm:U.star_notation}
    U^\star_n(\theta;\lam) = \sumj \zeta^\star_j(\theta;\lam) 
    % = \sumj \zeta(\lx_{t_{j-1}},\sig^\star_{j-1}Z_j,\theta;\lam),
    = \sumj \zeta^\star_{j-1}(\sig^\star_{j-1}Z_j,\theta;\lam),
\end{equation}
which will give rise to non-trivial limits of $\p_\theta^k U_n(\theta;\lam)$ after suitably normalized.
Due to the i.i.d. Gaussian nature of $(Z_j)$, we can proceed free from discontinuous variations caused by jumps and spikes.

We will write
\begin{align}
    & \text{$\xi_n(\theta;\lam)=O_{u,p}(a_n)$ \quad if \quad $\sup_{\lam,\theta} |a_n^{-1} \xi_n(\theta;\lam)| = O_p(1)$},
    \nn\\
    & \text{$\xi_n(\theta;\lam)=o_{u,p}(a_n)$ \quad if \quad $\sup_{\lam,\theta} |a_n^{-1} \xi_n(\theta;\lam)| = o_p(1)$}
\end{align}
for any sequence of random functions $\{\xi_n(\theta;\lam)\}_n$ and any positive sequence $(a_n)_n$.

%%%%%
\subsubsection{Approximation of Riemann integral}

By \eqref{hm:disconti.removed}, we have
\begin{equation}\label{hm:lim.thm-1}
\left|\p_\theta^k \left(\frac1n U_n(\theta;\lam) - \frac1n U^\star_n(\theta;\lam)\right) \right| = o_p\left(\frac{1}{\sqrt{n}}\right) 
\end{equation}
for $k=0,1,2,3$ in either case of Assumption \ref{hm:A_lam} (about the behavior of $\lam=\lam_n$).
% The property \eqref{hm:lim.thm-1} enables us to derive the limit theorems for the quasi-divergence and the normalized Hessian matrix in terms of $\p_\theta^k U^\star_n(\theta;\lam)$. 

% For any sequence of random functions $\{\xi_n(\theta;\lam)\}_n$ and any positive sequence $(a_n)_n$, we will write 
% % $\xi_n(\theta;\lam)=O^\ast_p(a_n)$ (resp. $\xi_n(\theta;\lam)=o^\ast_p(a_n)$) if $\sup_{\lam,\theta} |a_n^{-1} \xi_n(\theta;\lam)| = O_p(1)$ (resp. $\sup_{\lam,\theta} |a_n^{-1} \xi_n(\theta;\lam)| = o_p(1)$).
% \begin{align}
% \xi_n(\theta;\lam)=O_{u,p}(a_n) \quad &\text{if} \quad \sup_{\lam,\theta} |a_n^{-1} \xi_n(\theta;\lam)| = O_p(1),
% \nn\\
% \xi_n(\theta;\lam)=o_{u,p}(a_n) \quad &\text{if} \quad \sup_{\lam,\theta} |a_n^{-1} \xi_n(\theta;\lam)| = o_p(1).
% \nonumber
% \end{align}

Recall the constant $c'>0$ given in \eqref{hm:A_J-2} and the notation rule \eqref{hm:gen.notation_f}.

\begin{lem}
    \label{hm:lim.thm_lem-1}
    % There exists a constant $\mathsf{a}'>0$ for which
    We have
    \begin{equation}
        \left|\frac1n U^\star_n(\theta;\lam) - \frac1T \int_0^T \int \zeta\left(\lx_t,\sig^\star_t z,\theta;\lam\right) \phi_r(z)dz dt\right| 
        = O_{u,p}\big(h^{(1\wedge c')/2)}\big).
        %= O_{u,p}\big(h^{\mathsf{a}'}\big).
    \end{equation}
\end{lem}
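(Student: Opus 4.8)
The plan is to center each summand by its conditional mean and recognize the predictable part as a left Riemann sum. Since $Z_j$ is independent of $\mcf_{t_{j-1}}$ while $\lx_{t_{j-1}}$ and $\sig^\star_{j-1}$ are $\mcf_{t_{j-1}}$-measurable, setting
\begin{equation}
g(x,\theta;\lam):=\int\zeta(x,\sig(x,\tz)z,\theta;\lam)\,\phi_r(z)\,dz
\nonumber
\end{equation}
gives $\E^{j-1}[\zeta^\star_j(\theta;\lam)]=g(\lx_{t_{j-1}},\theta;\lam)$ a.s., and the right-hand integrand of the lemma is exactly $g(\lx_t,\theta;\lam)$. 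Writing $\frac1n=h/T$, I would decompose
\begin{equation}
\frac1n U^\star_n(\theta;\lam)-\frac1T\int_0^T g(\lx_t,\theta;\lam)\,dt = M_n(\theta;\lam)+A_n(\theta;\lam),
\nonumber
\end{equation}
where $M_n:=\frac1n\sumj(\zeta^\star_j-\E^{j-1}[\zeta^\star_j])$ is a martingale-difference average and $A_n:=\frac1T\sumj\int_j\bigl(g(\lx_{t_{j-1}},\theta;\lam)-g(\lx_t,\theta;\lam)\bigr)\,dt$ is the Riemann-sum error.

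For the Riemann-sum term I would use a Lipschitz bound $\sup_{\lam,\theta}|\p_x g(x,\theta;\lam)|\lesssim 1+|x|^C$ uniform in $\lam$, and then Cauchy--Schwarz together with \eqref{hm:X-moment.estimates}: since $\E[|\lx_t-\lx_{t_{j-1}}|^2]\lesssim h^{1\wedge c'}$ (this is precisely where the exponent enters, through the $c'$ of \eqref{hm:A_J-2}), each summand obeys $\E|g(\lx_{t_{j-1}};\lam)-g(\lx_t;\lam)|\lesssim h^{(1\wedge c')/2}$, and averaging over $j$ preserves the order, giving $A_n=O_{u,p}(h^{(1\wedge c')/2})$. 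The Lipschitz bound splits into the $\p_y\zeta$-contribution arising from differentiating $\sig(x,\tz)$, which is $\lam$-uniformly polynomially bounded by \eqref{hm:zeta-A3} with $m=1$, and the first-slot $x$-derivative of $\zeta$; for the latter the apparent $1/\lam$ of \eqref{hm:zeta-A1} cancels because $x$ enters $\zeta$ only through $\mathsf{d}(x,\theta)$ and $S(x,\theta)$, each $x$-differentiation of $\mathsf{d}^{-\lam/2}$ or of the exponent of $\vp_j^\lam$ producing a compensating factor $\lam$ (transparent on the explicit form $\int\vp^\lam\phi_r\,dz=(2\pi)^{-d\lam/2}\det(I_r+\lam\,\sig^\top S^{-1}\sig)^{-1/2}$).

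For the martingale term I would show $M_n=O_{u,p}(n^{-1/2})$, which suffices since $n^{-1/2}\asymp h^{1/2}\le h^{(1\wedge c')/2}$. The key is conditional moments of the centered differences bounded uniformly in $\lam$ despite the $1/\lam$ in \eqref{hm:zeta-A1}. Writing $F(z):=\zeta(\lx_{t_{j-1}},\sig^\star_{j-1}z,\theta;\lam)$, I would represent $F(Z_j)-\E^{j-1}[F(Z_j)]=\int\bigl(F(Z_j)-F(z')\bigr)\phi_r(z')\,dz'$ and bound $F(Z_j)-F(z')$ by the gradient $\p_y\zeta$ along the segment; by \eqref{hm:zeta-A3} with $m=1$ this yields $\E^{j-1}[|\zeta^\star_j-\E^{j-1}\zeta^\star_j|^K]\lesssim 1+|\lx_{t_{j-1}}|^{C}$ uniformly in $(\lam,\theta)$, because only \emph{differences} of $\zeta$ enter and the constant $1/\lam$ drops out. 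The Burkholder inequality for martingale-difference arrays then gives $M_n(\theta;\lam)=O_p(n^{-1/2})$ for each fixed $(\theta,\lam)$; applying the same representation to $\p_\theta\zeta^\star_j$ (via \eqref{hm:zeta-A3} with $k=1$) controls $\p_\theta M_n$, and the Sobolev inequality \eqref{hm:sobolev.ineq} on the convex parameter set upgrades this to the uniform order.

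The hard part will be the uniformity in $\lam$ down to $\lam\downarrow 0$: the crude bound \eqref{hm:zeta-A1} carries a factor $1/\lam$ that would destroy both rates, and the remedy in each term is to arrange that only differences of $\zeta$ appear — the centered martingale increment and the spatial increment $g(\lx_{t_{j-1}})-g(\lx_t)$ — so that the governing object is a $y$- or $x$-derivative in which the $1/\lam$ is compensated (by \eqref{hm:zeta-A3} in the $y$-direction, and by the $\lam$-producing differentiation of $S(x,\theta)$ in the $x$-direction). A secondary technical point is extending the Sobolev argument from $\theta$ to the joint variable $(\lam,\theta)$ on $(0,\overline{\lam}]\times\Theta$, which is legitimate because the relevant moment bounds are $\lam$-uniform and $\zeta$ extends smoothly as $\lam\downarrow 0$ (cf. the Box--Cox limits in Remarks \ref{hm:rem_dpgqlf.form} and \ref{hm:rem_ldpgqlf.form}).
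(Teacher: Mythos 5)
Your proposal is correct and follows essentially the same route as the paper: compensate each summand and control the martingale part via Burkholder plus the Sobolev inequality, then bound the remaining Riemann-sum error through a polynomially-weighted Lipschitz estimate combined with the moment bound \eqref{hm:X-moment.estimates}, which is exactly where the exponent $(1\wedge c')/2$ enters. The only (immaterial) difference is that you integrate out $z$ first to form $g(x,\theta;\lam)$ before differencing in $x$, whereas the paper keeps the $z$-integral and bounds the increment of $\zeta$ in both slots directly; your extra care about the $\lam$-uniformity of the Lipschitz constant is a welcome clarification of a point the paper leaves implicit.
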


\begin{proof}
By compensating each summand $\zeta^\star_j(\theta;\lam)$ and then applying the Burkholder and Sobolev inequalities (as in Section \ref{hm:sec_disc.remove-subsec.2}), we get
\begin{align}
    \frac1n U^\star_n(\theta;\lam) = O_{u,p}(n^{-1/2}) 
    + \frac1n \sumj \int \zeta^\star_{j-1}(\sig^\star_{j-1} z,\theta;\lam)\phi_r(z)dz.
\end{align}
Observe that
\begin{align}
    & \sup_\theta \left| \frac1n \sumj \int \zeta^\star_{j-1}(\sig^\star_{j-1}z,\theta;\lam)\phi_r(z)dz - \frac1T \int_0^T \int \zeta\left(\lx_t,\sig^\star_t z,\theta;\lam\right) \phi_r(z)dz dt\right|
    \nn\\
    &\lesssim \frac1n \sumj \frac1h \int_j \int 
    \sup_\theta \left| \zeta\left(\lx_t,\sig^\star_t z,\theta;\lam\right) - \zeta^\star_{j-1}(\sig^\star_{j-1}z,\theta;\lam)\right| \phi_r(z)dz dt
    \nn\\
    &\lesssim \frac1n \sumj \frac1h \int_j \int 
    (1+|\lx_{t_{j-1}}|^C + |\lx_{t}|^C)(1+|\sig^\star_t|^C |z|^C)
    \nn\\
    &{}\qquad \times ( |\lx_t-\lx_{t_{j-1}}| + |\sig^\star_t-\sig^\star_{j-1}||z| )\phi_r(z)dz dt
    \nn\\
    &\lesssim 
    \frac1n \sumj \frac1h \int_j (1+|\lx_{t_{j-1}}|^C + |\lx_{t}|^C) \, \left( h^{-(1\wedge c')/2}|\lx_t - \lx_{t_{j-1}}|\right) dt \times h^{(1\wedge c')/2}
    \nn\\
    &= O_{u,p}(h^{(1\wedge c')/2}).
\end{align}
Here, we used the latter part of \eqref{hm:X-moment.estimates} in the last step.
The proof is complete.
\end{proof}

The following corollary is immediate from the proof of Lemma \ref{hm:lim.thm_lem-1}.

\begin{cor}\label{hm:lim.thm_lem1-cor1}
    For any measurable function $\mathsf{g}:\,\mbbr^{d'}\times\Theta\times(0,\overline{\lam}]$ such that
    \begin{equation}
        \sup_\theta |\p_x\mathsf{g}(x,\theta;\lam)| \lesssim 1+|x|^C,
    \end{equation}
    we have
    \begin{equation}
        \left| \frac1n \sumj \mathsf{g}(\lx_{t_{j-1}},\theta;\lam) - \frac1T \int_0^T \mathsf{g}^\star(\lx_t,\theta;\lam) dt\right| 
        = O_{u,p}\big(h^{(1\wedge c')/2)}\big).
    \end{equation}
\end{cor}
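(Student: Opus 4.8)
The point is that, in contrast with Lemma \ref{hm:lim.thm_lem-1}, each summand $\mathsf{g}(\lx_{t_{j-1}},\theta;\lam)$ is already $\mcf_{t_{j-1}}$-measurable, so no innovation $Z_j$ enters and the martingale-compensation step (Burkholder together with the Sobolev inequality \eqref{hm:sobolev.ineq}) used in proving the lemma becomes unnecessary. Only the Riemann-sum-to-integral approximation --- the second half of that proof --- survives, and the plan is to reproduce it with the abstract function $\mathsf{g}$ in place of $x\mapsto \int\zeta(\lx_t,\sig^\star_t z,\theta;\lam)\phi_r(z)dz$, using the single gradient bound $\sup_\theta|\p_x\mathsf{g}(x,\theta;\lam)|\lesssim 1+|x|^C$ in place of the more elaborate derivative estimate there.

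First I would rewrite the difference as a telescoping sum over the short intervals $\mbbi_j$; since $\tfrac1n=\tfrac{h}{T}$ and $\mathsf{g}(\lx_{t_{j-1}},\theta;\lam)$ is constant in $t\in\mbbi_j$, and recalling $\mathsf{g}^\star(\lx_t,\theta;\lam)=\mathsf{g}(\lx_t,\theta;\lam)$,
\begin{align}
\frac1n\sumj \mathsf{g}(\lx_{t_{j-1}},\theta;\lam) - \frac1T\int_0^T \mathsf{g}^\star(\lx_t,\theta;\lam)\,dt
&= \frac1T\sumj \int_{\mbbi_j}\big(\mathsf{g}(\lx_{t_{j-1}},\theta;\lam) - \mathsf{g}^\star(\lx_t,\theta;\lam)\big)\,dt.
\nn
\end{align}
Applying the fundamental theorem of calculus along the segment joining $\lx_{t_{j-1}}$ and $\lx_t$ and invoking the gradient bound on $\mathsf{g}$ yields the pointwise estimate
\begin{align}
\sup_{\lam,\theta}\big|\mathsf{g}(\lx_{t_{j-1}},\theta;\lam) - \mathsf{g}^\star(\lx_t,\theta;\lam)\big| \lesssim \big(1+|\lx_{t_{j-1}}|^C + |\lx_t|^C\big)\,|\lx_t - \lx_{t_{j-1}}|,
\nn
\end{align}
whose right-hand side is free of $(\theta,\lam)$; this uniformity is precisely why the Sobolev-inequality argument can be dispensed with here and why the conclusion is $O_{u,p}$ rather than merely pointwise in $(\theta,\lam)$.

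It then remains to take expectations and read off the rate. Inserting the pointwise bound, passing to expectation, and applying the Cauchy--Schwarz inequality in each slice $\int_{\mbbi_j}(\cdots)\,dt$ separates the polynomial factor --- controlled by the moment bound \eqref{hm:X_m.bound} --- from the increment factor $\E[|\lx_t-\lx_{t_{j-1}}|^2]^{1/2}$, giving
\begin{align}
\E\left[\sup_{\lam,\theta}\left|\frac1n\sumj \mathsf{g}(\lx_{t_{j-1}},\theta;\lam) - \frac1T\int_0^T \mathsf{g}^\star(\lx_t,\theta;\lam)\,dt\right|\right] \lesssim h^{(1\wedge c')/2},
\nn
\end{align}
whence Markov's inequality upgrades the estimate to the claimed $O_{u,p}(h^{(1\wedge c')/2})$. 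The one place where any care is genuinely required --- and the main (if mild) obstacle --- is the increment factor: because $\lx$ carries the jumps $J'$, it enjoys only the weaker $L^2$-modulus of continuity recorded in the latter part of \eqref{hm:X-moment.estimates}, namely $\E[|\lx_t-\lx_{t_{j-1}}|^2]\lesssim h^{1\wedge c'}$, and it is exactly this (rather than the $h$ one would get for the continuous version $\cx$) that produces the exponent $(1\wedge c')/2$. Everything else is a direct transcription of the Riemann-integral approximation already carried out for Lemma \ref{hm:lim.thm_lem-1}.
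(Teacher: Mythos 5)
Your proposal is correct and follows essentially the same route as the paper, which simply declares the corollary immediate from the proof of Lemma \ref{hm:lim.thm_lem-1}: you correctly identify that the martingale-compensation (Burkholder/Sobolev) step drops out because the summands are $\mcf_{t_{j-1}}$-measurable, and your remaining argument is a direct transcription of the Riemann-sum approximation in that proof, including the use of the latter part of \eqref{hm:X-moment.estimates} to obtain the exponent $(1\wedge c')/2$.
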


We can deduce the following Lemma \ref{hm:lim.thm_lem-2} similarly to Lemma \ref{hm:lim.thm_lem-1}, and also Lemma \ref{hm:lim.thm_lem-3} directly. %; we omit their proofs.
    
\begin{lem}
    \label{hm:lim.thm_lem-2}
    % There exists a constant $\mathsf{a}''>0$ for which
    We have
    \begin{equation}
        \left|- \frac1n \p_\theta^2 U^\star_n(\tz;\lam) + \frac1T \int_0^T \int \p_\theta^2 \zeta\left(\lx_t,\sig^\star_t z,\tz;\lam\right) \phi_r(z)dz dt\right| 
        = O_{u,p}\big(h^{(1\wedge c')/2)}\big).
        % = O_{u,p}\big(h^{\mathsf{a}''}\big).
    \end{equation}
\end{lem}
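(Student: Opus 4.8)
The plan is to rerun the argument for Lemma \ref{hm:lim.thm_lem-1} with the integrand $\zeta$ replaced by its second $\theta$-derivative, frozen at the true value. Setting $\tilde\zeta:=\p_\theta^2\zeta$, the $j$th summand of $\p_\theta^2 U^\star_n(\tz;\lam)$ is $\tilde\zeta(\lx_{t_{j-1}},\sig^\star_{j-1}Z_j,\tz;\lam)$, which is precisely of the form $\zeta^\star_{j-1}(\sig^\star_{j-1}Z_j,\,\cdot\,;\lam)$ handled there. The preliminary point is that $\tilde\zeta$ inherits the hypotheses that make that proof run: the growth bounds \eqref{hm:zeta-A1}--\eqref{hm:zeta-A3} are imposed up to the \emph{third} $\theta$-derivative, so $\tilde\zeta=\p_\theta^2\zeta$ still carries one spare $\theta$-derivative with the same polynomial growth in $x$ and the same $\p_y^m$-estimates, while the parity condition \eqref{hm:zeta-A4} plays no role in Lemma \ref{hm:lim.thm_lem-1} and is not needed here either. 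Note also that, since everything is evaluated at $\theta=\tz$, the $O_{u,p}$ in the statement reduces to uniformity in the tuning parameter $\lam$ alone.

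First I would split each summand into its $\mcf_{t_{j-1}}$-conditional mean and a martingale increment. As $Z_j\sim N_r(0,I_r)$ is independent of $\mcf_{t_{j-1}}$ and $(\lx_{t_{j-1}},\sig^\star_{j-1})$ is $\mcf_{t_{j-1}}$-measurable, the conditional mean is $\int\tilde\zeta(\lx_{t_{j-1}},\sig^\star_{j-1}z,\tz;\lam)\phi_r(z)\,dz$. For the compensated (martingale-difference) part, the Burkholder inequality for arrays gives $O_p(n^{-1/2})$ at each fixed $\lam$, and the Sobolev inequality \eqref{hm:sobolev.ineq}, used exactly as in Section \ref{hm:sec_disc.remove-subsec.2} but now with $\theta$ frozen so that only uniformity in $\lam$ remains, promotes this to $O_{u,p}(n^{-1/2})$. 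Since $n^{-1/2}\asymp h^{1/2}\le h^{(1\wedge c')/2}$, this contribution is already within the claimed rate.

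Second, I would compare the remaining predictable average $\frac1n\sumj\int\tilde\zeta(\lx_{t_{j-1}},\sig^\star_{j-1}z,\tz;\lam)\phi_r(z)\,dz$ with the Riemann integral $\frac1T\int_0^T\int\tilde\zeta(\lx_t,\sig^\star_t z,\tz;\lam)\phi_r(z)\,dz\,dt$. Bounding summand by summand and expanding the difference $\tilde\zeta(\lx_t,\sig^\star_t z,\tz;\lam)-\tilde\zeta(\lx_{t_{j-1}},\sig^\star_{j-1}z,\tz;\lam)$ to first order in its two arguments, the polynomial-growth Lipschitz estimate for $\tilde\zeta$ produces a bound of the type $(1+|\lx_{t_{j-1}}|^C+|\lx_t|^C)(1+|\sig^\star_t|^C|z|^C)(|\lx_t-\lx_{t_{j-1}}|+|\sig^\star_t-\sig^\star_{j-1}||z|)$; integrating out the Gaussian $z$ and applying the increment moment bound for $\lx$ in \eqref{hm:X-moment.estimates} yields $O_{u,p}(h^{(1\wedge c')/2})$, exactly as in the closing display of the proof of Lemma \ref{hm:lim.thm_lem-1}. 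Combining the two contributions gives the assertion.

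The only place demanding care is the preliminary verification that $\p_\theta^2\zeta$ genuinely meets the same growth and smoothness used above; everything afterward is a line-for-line transcription of Lemma \ref{hm:lim.thm_lem-1}, which is exactly why its proof can be omitted. The design of \eqref{hm:zeta-A1}--\eqref{hm:zeta-A3}, stated up to the third $\theta$-derivative rather than to the order actually differentiated, is precisely what furnishes the one derivative of slack this lemma consumes.
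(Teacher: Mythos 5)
Your proof is correct and follows exactly the route the paper intends: the paper omits the proof of Lemma \ref{hm:lim.thm_lem-2}, stating only that it is deduced "similarly to Lemma \ref{hm:lim.thm_lem-1}," and your argument is precisely that transcription, with the useful extra care of verifying that $\p_\theta^2\zeta$ inherits the bounds \eqref{hm:zeta-A1}--\eqref{hm:zeta-A3} because they are stated up to the third $\theta$-derivative. Nothing to add.
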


\begin{lem}
    \label{hm:lim.thm_lem-3}
    We have
    \begin{equation}
        % \sup_{\theta,\lam}
        \left|\frac1n \p_\theta^3 U^\star_n(\tz;\lam)\right| = O_{u,p}(1).
    \end{equation}
\end{lem}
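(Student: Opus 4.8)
My plan is to exploit the fact that only stochastic boundedness (not a limit in probability) is asserted, so that the martingale-compensation-plus-Sobolev machinery used for Lemmas \ref{hm:lim.thm_lem-1} and \ref{hm:lim.thm_lem-2} is unnecessary: a crude $L^1$-estimate will suffice. Recalling $U^\star_n(\theta;\lam)=\sumj \zeta^\star_{j-1}(\sig^\star_{j-1}Z_j,\theta;\lam)$ from \eqref{hm:U.star_notation} and interchanging the (third) $\theta$-derivative with the finite sum, I would first pass the absolute value inside and bound the quantity at $\tz$ by its $(\lam,\theta)$-supremum,
\begin{equation}
    \sup_{\lam}\left|\frac1n\p_\theta^3 U^\star_n(\tz;\lam)\right|
    \le \frac1n\sumj \sup_{\lam,\theta}\left|\p_\theta^3\zeta^\star_{j-1}(\sig^\star_{j-1}Z_j,\theta;\lam)\right|,
    \nonumber
\end{equation}
so that it remains only to show the right-hand side is $O_p(1)$.

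The decisive input will be the $\lam$-uniform bound \eqref{hm:zeta-A3} with $m=0$ and $k=3$. The point is that the $1/\lam$ appearing in \eqref{hm:zeta-A1} has already been cancelled (inside Lemma \ref{hm:lem_dp.ldp_U.verify}) against the factor $\lam$ that each $\theta$-differentiation of $\vp_j(\theta)^\lam$ produces; thus \eqref{hm:zeta-A3} furnishes a genuinely $\lam$-free estimate, at the cost of polynomial growth in the second argument. Evaluating it at $y=\sig^\star_{j-1}Z_j$ would give
\begin{equation}
    \sup_{\lam,\theta}\left|\p_\theta^3\zeta^\star_{j-1}(\sig^\star_{j-1}Z_j,\theta;\lam)\right|
    \lesssim \left(1+|\lx_{t_{j-1}}|^C\right)\left(1+|\sig^\star_{j-1}Z_j|^C\right),
    \nonumber
\end{equation}
where the growth in the Gaussian argument is harmless since $Z_j\sim N_r(0,I_r)$ is independent of $\mcf_{t_{j-1}}$ and carries finite moments of every order.

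To finish, I would take expectations and condition on $\mcf_{t_{j-1}}$ to integrate out $Z_j$: using the polynomial growth $|\sig(x,\tz)|\lesssim(1+|x|)^C$ from Assumption \ref{hm:A_diff.coeff} together with $\E[|Z_j|^K]<\infty$ and the uniform moment bound \eqref{hm:X_m.bound} for $\lx$, each summand has expectation bounded uniformly in $(j,n)$; therefore $\E[\frac1n\sumj(1+|\lx_{t_{j-1}}|^C)(1+|\sig^\star_{j-1}Z_j|^C)]=O(1)$, and Markov's inequality yields the claimed $O_{u,p}(1)$. I expect the only genuine subtlety to be the bookkeeping at the boundary order $k=3$: one must be careful to invoke the $\lam$-free, $y$-growing estimate \eqref{hm:zeta-A3} rather than the $y$-uniform but $1/\lam$-singular estimate \eqref{hm:zeta-A1}, the former being admissible precisely because the argument $\sig^\star_{j-1}Z_j$ possesses all polynomial moments.
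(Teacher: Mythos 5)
Your argument is correct and is precisely the ``direct'' estimate the paper alludes to when it omits this proof: bound $\sup_{\lam,\theta}|\p_\theta^3\zeta^\star_{j-1}(\sig^\star_{j-1}Z_j,\theta;\lam)|$ via \eqref{hm:zeta-A3} with $m=0$, $k=3$, integrate out $Z_j$ using its independence from $\mcf_{t_{j-1}}$, the polynomial growth of $\sig(\cdot,\tz)$, and \eqref{hm:X_m.bound}, and conclude by Markov's inequality. Your remark that one must invoke the $\lam$-free bound \eqref{hm:zeta-A3} rather than the $1/\lam$-singular \eqref{hm:zeta-A1} is exactly the right bookkeeping, since at $k=3$ every $\theta$-derivative of $\frac{1}{\lam}\vp_j(\theta)^\lam$ has absorbed the factor $\lam$.
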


%%%%%
\subsubsection{Asymptotic mixed normality}

By \eqref{hm:disconti.removed},
\begin{equation}
    \sup_\lam \left|  \frac{1}{\sqrt{n}} \p_\theta U_n(\lam) - \frac{1}{\sqrt{n}} \p_\theta U^\star_n(\lam)\right| \cip 0.
\end{equation}
We recall that $\overline{U}'_n := n^{-1/2}\p_\theta U^\star_n(\lam)$ is said to converge ($\mcf$-)stably in law to $\overline{U}'_0$, denoted by $\overline{U}'_n \scl \overline{U}'_0$, if $(\overline{U}'_n, G_{n}) \cil (\overline{U}'_0, G)$ for every $\mcf$-measurable random variables $G_{n}$ and $G$ such that $G_{n}\cip G$, where $\overline{U}'_0$ is defined on an extended probability space of the original one; it is the tailor-made mode of convergence for verifying \eqref{hm:joint.conv}. 
See \cite{Jac97} for a general result of the stable convergence in law designed for the present high-frequency sampling.

Let
\begin{equation}\label{hm:def_Sigma}
    \Sig_{0,t}(\lam) := \frac1T \int_0^t \int \left( \p_\theta \zeta\left(\lx_s,{S^\star_s}^{1/2} z,\tz;\lam\right) \right)^{\otimes 2} \phi(z)dz ds, \quad t\in[0,T].
\end{equation}
The objective here is to prove the stable convergence in law of this process to the centered mixed-normal ($\mcf$-conditionally Gaussian) distribution with possibly random asymptotic covariance $\Sig_0(\lambda):=\Sig_{0,T}(\lambda)$.
The statement is given as follows.

\begin{lem}\label{hm:common.stable.CLT}
    \hmrev{Assume additionally that $\E^{j-1}[\p_\theta\zeta^\star_j(\tz;\lam)]=0$ a.s. for any $\lam>0$.} 
    Then, we have
    \begin{equation}
    % \frac{1}{\sqrt{n}} \p_\theta U^\star_n(\lam) 
    \overline{U}'_n \scl MN_p\left(0,\Sig_0(\lambda)\right).
    \nonumber
    \end{equation}
\end{lem}

\begin{proof}
We apply the criterion in \cite{Jac97} for the random {\cadlag} step process
\begin{equation}
t \mapsto \sum_{j=1}^{[nt/T]} \chi_j(\lambda),\qquad t\in[0,T],
\nonumber
\end{equation}
% Let us simply write %$F_t=F(X_t,\tz)$ 
% \begin{equation}
% F_t=F(X_t,\tz)
% \nonumber
% \end{equation}
% for any measurable function $F$ on $\mbbr^{d'} \times \Theta$.
where $\chi_j(\lam) := \p_\theta\zeta^\star_j(\tz;\lam)$, that is, $n^{-1/2}\p_\theta U^\star_n(\lam) = \sumj \chi_j(\lam)$.
We set the reference continuous martingale $M$ in \cite{Jac97} to be $w'$. Fix a $u\in\mbbr^p$ in the rest of this proof. 

% By \eqref{hm:zeta-A3} and \eqref{hm:zeta-A4}, we have $\E^{j-1}[\chi_j(\lambda)]=0$ a.s. for any $\lam>0$. 
\hmrev{Under the assumption, we have $\E^{j-1}[\chi_j(\lambda)]=0$ a.s. for any $\lam>0$.} 
In view of \cite[Theorem 3-2]{Jac97} and the Cram\'{e}r-Wold device, it suffices to verify the following convergences for each $t\in[0,T]$:
\begin{align}
& \sum_{j=1}^{[nt/T]} \E^{j-1}\left[\left|\chi_j(\lambda)[u]\right|^4\right] \cip 0,
\label{hm:score-p1}\\
& \sum_{j=1}^{[nt/T]} \E^{j-1}\left[\chi_j(\lam)^{\otimes 2}[u^{\otimes 2}] \right] \cip \Sig_{0,t}(\lambda)[u^{\otimes 2}],
\label{hm:score-p2}\\
& \left|\sum_{j=1}^{[nt/T]} \E^{j-1}\left[\chi_j(\lambda) [u]\, \D_j w'\right]\right|
+ \left|\sum_{j=1}^{[nt/T]} \E^{j-1}\left[\chi_j(\lambda) [u]\, \D_j N\right]\right| \cip 0,
\label{hm:score-p3}
\end{align}
where the asymptotic orthogonality condition \eqref{hm:score-p3}, which is essential to ensure the stability of convergence, has to hold for any bounded $(\mcf_t)$-martingale $N$ orthogonal to $w'$ (namely, the quadratic-variation process $[N,w']_{\cdot}$ is identically zero).

By \eqref{hm:zeta-A3}, we have
\begin{equation}
    \sup_{\theta}\left| \p_\theta \zeta(x,\sig(x;\tz)z,\theta;\lam)\right| \lesssim (1+|x|^C)(1+|z|^C).
\end{equation}
Since $\chi_j(\lam)$ only contains $\lx_{t_{j-1}}$ and $Z_j \sim N_r(0,I_r)$, we 
% have no integrability issue and 
easily get \eqref{hm:score-p1}:
\begin{equation}
    \E\left[\left|\sum_{j=1}^{[nt/T]} \E^{j-1}\left[\left|\chi_j(\lambda)[u]\right|^4\right] \right|\right]
    \lesssim \frac{|u|^4}{n} \frac1n \sumj \left(1+\E[|\lx_{t_{j-1}}|^C]\right)
    \lesssim \frac{|u|^4}{n} \cip 0.
\end{equation}

For the convergence \eqref{hm:score-p2}, through the compensation and Burkholder inequality as in the proof of Lemma \ref{hm:lim.thm_lem-1}, we can deduce
\begin{align}
    & \sum_{j=1}^{[nt/T]} \E^{j-1}\left[\chi_j(\lam)^{\otimes 2}[u^{\otimes 2}] \right]
    \nn\\
    &= \frac1T \int_0^t \int \left( \p_\theta \zeta\left(\lx_t,\sig^\star_t z,\tz;\lam\right) \right)^{\otimes 2} \phi_r(z)dz dt\,[u^{\otimes 2}] + O_{p}(h^{(1\wedge c')/2})
    \nn\\
    &= \frac1T \int_0^t \int \left( \p_\theta \zeta\big(\lx_t,{S^\star_t}^{1/2} z,\tz;\lam\big) \right)^{\otimes 2} \phi(z)dz dt\,[u^{\otimes 2}] + O_{p}(h^{(1\wedge c')/2})
    \nn\\
    &= \Sig_{0,t}(\lam)\,[u^{\otimes 2}] + O_{p}(h^{(1\wedge c')/2})
\end{align}
as was desired.

For \eqref{hm:score-p3}, we note the following identities (a.s.):
% $\E^{j-1}\left[\chi_j(\lambda) [u]\, \D_j w'\right]=0$ and $\E^{j-1}\left[\chi_j(\lambda) [u]\, \D_j N\right]=0$. 
\begin{align}
    \E^{j-1}\left[\chi_j(\lambda) [u]\, \D_j w'\right]=0,
    % \label{hm:orthogonal.pr-1}\\
    \qquad 
    \E^{j-1}\left[\chi_j(\lambda) [u]\, \D_j N\right]=0.
    % \label{hm:orthogonal.pr-2}
\end{align}
The first one is obvious due to \eqref{hm:zeta-A4}. 
As for the second one, supposing that $p=r=1$ without loss of generality, we note that $\chi_j(\lam)$ is measurable with respect to the filtration generated by $\lx_{t_{j-1}}$ and the family of $w'$-increments $\{w'_t-w'_s\}_{s,t\in\mbbi_j}$. Then, we can apply the martingale representation theorem \cite[Theorem III.4.34]{JacShi03} 
with setting $X=w'$ and $\mathscr{H}=\mcf_{t_{j-1}}$ for the elements $X$ and $\mathscr{H}$ therein, to conclude that we can write
\begin{equation}
    \chi_j(\lam) = \int_j \psi_{u-} dw'_u
\end{equation}
for some process $\psi$ adapted to the filtration $\mcf^{\sharp}_{t}$ with $\mcf^{\sharp}_{t} := \mcf_{t_{j-1}} \cap \mcf^{w'}_t$. The orthogonality between $w'$ and $N$ gives
\begin{align}
    \E^{j-1}\left[\chi_j(\lambda) [u]\, \D_j N\right]
    &= \E^{j-1}\left[\int_j \psi_{u-} dw'_u \, \int_j dN_u\right] \nn\\
    &= \E^{j-1}\left[\int_j \psi_{u-} d[N,w']_u\right] = 0.
\end{align}
Hence, we get \eqref{hm:score-p3} and the proof is complete.
% See, for example, \cite[Proof of Eq.(6.34): pp.1054--1055]{Mas19spa} and the references therein.
\end{proof}

%%%%%
%%%%%
\section{Proof of Theorem \ref{hm:thm_AMN}}
\label{hm:sec_main.results.proof}

In this section, building on what we have proved in Section \ref{hm:sec_aux.asymp}, we will complete the proof of Theorem \ref{hm:thm_AMN}.
We will keep using the generic notation introduced in Section \ref{hm:sec_aux.asymp} such as \eqref{hm:gen.notation_f}.

%%%%%
\subsection{Introductory remarks}

First, we describe the outline that will commonly appear in the density-power and H\"{o}lder-based GQLFs.

Recall the notation \eqref{hm:def_Del.n} and \eqref{hm:def_Gam.n}, both of which will be used for both density-power and H\"{o}lder-based GQLFs. Further, we will write
\begin{align}
\mbby_{n}(\theta;\lambda) &:= \frac{1}{n}\left(\mbbh_{n}(\theta;\lambda) - \mbbh_{n}(\tz;\lambda)\right).
\nonumber
\end{align}
We consider the following conditions for $\lam_n\equiv\lam>0$.
\begin{itemize}
\item 
There exist a constant $\ep_0>0$ and a random function $\mbby_0(\cdot;\lam):\,\overline{\Theta}\to\mbbr$ such that
\begin{equation}
\sup_{\theta} \left|n^{\ep_0}\left(\mbby_{n}(\theta;\lambda)-\mbby_{0}(\theta;\lambda)\right)\right| = O_p(1),
% \cip 0,
\label{hm:consistency-1}
\end{equation}
and that there exists an a.s. positive random variable $\chi_{0}(\lambda)$ for which the following identifiability condition holds:
\begin{align}
%\exists \chi_0(\lambda)>0~
& \forall \theta\in\Theta,\quad \mbby_{0}(\theta;\lambda) \le -\chi_{0}(\lam)|\theta-\tz|^{2}.
\label{hm:consistency-2}
%& \forall K>0,\quad \pr\left[\chi_0(\lambda) < r \right] \lesssim r^{-K},\qquad r>0.
%\label{hm:QLA.thm1-4-2}
\end{align}

\item 
There exist random positive definite random matrices $\Sig_{0}(\lam)$, $\Gam_0(\lambda)\in\mbbr^{p}\otimes\mbbr^{p}$ such that
\begin{equation}\label{hm:joint.conv}
\left(\Delta_{n}(\lambda),\, \Gam_{n}(\lambda)\right) \cil \big( \Sig_{0}(\lam)^{1/2}\eta,\, \Gam_{0}(\lam)\big),
\end{equation}
where $\eta \sim N_{p}(0,I_{p})$ independent of $\mcf$, defined on an extended probability space $(\overline{\Omega},\overline{\mcf},\overline{\pr})$.

\item 
We have
\begin{equation}\label{hm:p3H}
\sup_{\theta,\lam}\left| \frac1n \p_{\theta}^{3}\mbbh_{n}(\theta;\lambda)\right| = O_{p}(1).
\end{equation}

\end{itemize}
We will adopt the above conditions in either case of Assumption \ref{hm:A_lam}:
when $\lam=\lam_n \to 0$ ($n\to\infty$), we regard \eqref{hm:consistency-1}, \eqref{hm:consistency-2}, and \eqref{hm:joint.conv} as those with $\mbby_0(\theta;\lam)$ replaced by $\lim_{\lam \to 0}\mbby_0(\theta;\lam)$, $\Sig_0(\lam)$ by $\lim_{\lam \to 0}\Sig_0(\lam)$, and $\Gam_0(\lam)$ by $\lim_{\lam \to 0}\Gam_0(\lam)$, respectively, all taken in the a.s sense.
% The varying $\lam$ should not affect the joint convergence \eqref{hm:joint.conv}. 

\medskip

By the standard $M$-estimation argument \cite{Yos11}, the consistency $\tes(\lambda) \cip \tz\in\Theta$ follows from \eqref{hm:consistency-1} and \eqref{hm:consistency-2}. 
We have $\p_\theta\mbbh_n(\tes(\lambda);\lambda)=0$ on the event $\{\tes(\lambda)\in\Theta\}$ whose probability tends to $1$.
Then, the joint convergence \eqref{hm:joint.conv} and the tightness \eqref{hm:p3H} combined with the second-order Taylor expansion
\begin{align}
& \left(\Gam_n(\lambda) - 
\iint_{(0,1)^2}\frac{s}{n}\p_\theta^3\mbbh_n\big(\tz+ss'(\tes(\lambda)-\tz);\lambda\big)dsds' \, [\tes(\lam)-\tz]
\right)[\hat{u}_n(\lambda)]
\nn\\
&= \D_n(\lambda) - n^{-1/2}\p_\theta\mbbh_n(\tes(\lambda);\lambda)
\nonumber
\end{align}
give the asymptotic mixed normality \eqref{hm:AMN_pre} of the scaled estimator $\hat{u}_n(\lambda)$ with $V_0=\Gam_0(\lam)^{-1}\Sig_0(\lam)\Gam_0(\lam)^{-1}$:
\begin{itemize}
    \item When $\lam_n\equiv\lam>0$,
    \begin{align}
    \hat{u}_n(\lambda) &= \Gam_0(\lambda)^{-1}\D_n(\lambda) + o_p(1) 
    \nn\\
    &\cil MN_p\left(0,\, \Gam_0(\lam)^{-1}\Sig_0(\lam)\Gam_0(\lam)^{-1}\right).
    \nn
    \end{align}
    
    \item When $\lam \to 0$, we will have $\lim_{\lam \to 0}\Gam_0(\lam) = \lim_{\lam \to 0}\Sig_0(\lam) = \mci(\tz)$ a.s. with $\mci(\tz)=(\mci_{kl}(\tz))_{k,l}$ given by \eqref{hm:def_FI.lim}.
\end{itemize}
In the latter case, the asymptotic distribution of $\hat{u}_n(\lam)$ becomes $MN_p\left(0,\, \mci(\tz)^{-1}\right)$, meaning that a suitable control $\lam_n\to 0$ enables us to estimate $\tz$ asymptotically efficiently as if we observed a non-contaminated continuous process $(\check{X},\check{Y})$ without jumps and spike noises.

\medskip

The tightness \eqref{hm:p3H} is automatic from Lemma \ref{hm:lim.thm_lem-3}. Hence, we only need to verify \eqref{hm:consistency-1}, \eqref{hm:consistency-2}, and \eqref{hm:joint.conv}.

%%%%%
\subsection{Divergence: proof of consistency}

The uniform convergence \eqref{hm:consistency-1} directly follows from \eqref{hm:lim.thm-1} and Lemma \ref{hm:lim.thm_lem-1}. To conclude the consistency, we only need to verify \eqref{hm:consistency-2} by specifying the limit $\mbby_0(\theta;\lam)$.

Let ($\int = \int_{\mbbr^d}$)
\begin{align}
    \mbby_0(\theta;\lam) 
    &:= \frac1T \int_0^T \int_{\mbbr^r} \zeta\left(\lx_t,\sig^\star_t z,\theta;\lam\right) \phi_r(z)dz dt
    \nn\\
    &{}\qquad 
    - \frac1T \int_0^T \int_{\mbbr^r} \zeta\left(\lx_t,\sig^\star_t z,\tz;\lam\right) \phi_r(z)dz dt
    \nn\\
    &= \frac1T \int_0^T \int \zeta\left(\lx_t,S^{\star\,1/2}_t z,\theta;\lam\right) \phi(z)dz dt
    \nn\\
    &{}\qquad 
    - \frac1T \int_0^T \int \zeta\left(\lx_t,S^{\star\,1/2}_t z,\tz;\lam\right) \phi(z)dz dt
    \label{hm:Y0_def}
\end{align}
for both the density-power and H\"{o}lder-based divergences, where we used the following basic change of variables for the second equality:
\begin{equation}\label{hm:gauss_c.o.v.}
    \int_{\mbbr^r} \psi(x,\sig z)\phi_r(z)dz = \int_{\mbbr^d} \psi(x,S^{1/2}z)\phi(z)dz
\end{equation}
for a measurable function $\psi$ on $\mbbr^{d'} \times \mbbr^d$ and a positive-definite constant matrix $\sig\in\mbbr^d\otimes\mbbr^r$ with $S:=\sig^{\otimes 2}$.
This $\mbby_0(\theta;\lam)$ corresponds to the robustified version of the Gaussian quasi-Kullback-Leibler-divergence, the random function $\mbby(\theta)$ given in \cite[p.2857]{UchYos13}.

We will apply the following lemma for verifying \eqref{hm:consistency-2}.

\begin{lem}\label{hm:lem_consistency}
Let $\Theta\subset\mbbr^{p}$ be a bounded convex domain, and suppose that a random function $\mathsf{Y}:\,\overline{\Theta} \to \mbbr$ a.s. fulfils the following:
it belongs to the class $\mcc(\overline{\Theta})\cap\mcc^{2}(\Theta)$; $\mathsf{Y}(\theta)\le 0$ and $\mathsf{Y}$ takes its maximum $0$ only for $\theta=\tz\in\Theta$; and finally, $-\p_{\theta}^{2}\mathsf{Y}(\tz)$ is positive definite.
Then, we have
\begin{equation}
\chi_0=\chi_0(\tz) := \inf_{\theta:\,\theta\ne\tz}\frac{-\mathsf{Y}(\theta)}{|\theta-\tz|^2} > 0 \qquad \text{a.s.}
\nonumber
\end{equation}
\end{lem}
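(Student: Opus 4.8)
The plan is to argue pathwise: since all hypotheses hold a.s., I fix an $\omega$ in the underlying full-measure event and treat $\mathsf{Y}=\mathsf{Y}(\cdot,\omega)$ as a deterministic function on $\overline{\Theta}$ enjoying the three stated properties. For such a fixed realization I will bound the ratio $\theta\mapsto -\mathsf{Y}(\theta)/|\theta-\tz|^{2}$ from below by a strictly positive constant, splitting the domain $\overline{\Theta}\setminus\{\tz\}$ into a small ball around $\tz$ and its (compact) complement. The measurability of $\chi_0$ will then follow from the joint measurability of $\mathsf{Y}$, and the conclusion $\chi_0>0$ holds on the chosen full-measure event, hence a.s.

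Near $\tz$, I would exploit that $\tz\in\Theta$ is an interior maximizer of a $\mcc^2(\Theta)$ function, so $\p_\theta\mathsf{Y}(\tz)=0$ and $\mathsf{Y}(\tz)=0$. A second-order Taylor expansion then gives
\begin{equation}
-\mathsf{Y}(\theta)=\tfrac12\big(-\p_\theta^2\mathsf{Y}(\tz)\big)[(\theta-\tz)^{\otimes 2}]+o(|\theta-\tz|^2),\qquad \theta\to\tz .
\nonumber
\end{equation}
Writing $\rho:=\lam_{\min}\big(-\p_\theta^2\mathsf{Y}(\tz)\big)>0$ (positive by assumption), the quadratic part is at least $\tfrac{\rho}{2}|\theta-\tz|^2$, so there is a radius $\delta=\delta(\omega)>0$ with $B(\tz,\delta)\subset\Theta$ on which the $o$-term is dominated by $\tfrac{\rho}{4}|\theta-\tz|^2$; this yields $-\mathsf{Y}(\theta)\ge\tfrac{\rho}{4}|\theta-\tz|^2$, i.e. the ratio is bounded below by $\rho/4$ for $0<|\theta-\tz|<\delta$.

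Away from $\tz$, I would use compactness: on $K:=\{\theta\in\overline{\Theta}:|\theta-\tz|\ge\delta\}$ the denominator $|\theta-\tz|^2$ is continuous and bounded below by $\delta^2>0$ (and above, since $\overline{\Theta}$ is bounded), while $-\mathsf{Y}$ is continuous on $\overline{\Theta}$ and strictly positive on $K$ because $\mathsf{Y}$ attains its maximum $0$ only at $\tz$. Hence $\theta\mapsto -\mathsf{Y}(\theta)/|\theta-\tz|^2$ is continuous and strictly positive on the compact set $K$, so it attains a positive minimum $m=m(\omega)>0$. Combining the two regimes gives $\chi_0\ge\min\{\rho/4,\,m\}>0$, which is the claim; the bound on $\Theta$ required in \eqref{hm:consistency-2} follows a fortiori. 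I expect the only point needing care — rather than a genuine obstacle — to be the uniformity of the $o(|\theta-\tz|^2)$ remainder used to produce $\delta$, which is immediate from $\mathsf{Y}\in\mcc^2$ near the interior point $\tz$; everything else is a routine continuity-compactness argument carried out pathwise.
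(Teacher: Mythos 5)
Your proof is correct, and the underlying decomposition is the same as the paper's: split $\overline{\Theta}\setminus\{\tz\}$ into a small ball around $\tz$, where a second-order Taylor expansion and the positive definiteness of $-\p_\theta^2\mathsf{Y}(\tz)$ control the ratio, and the compact complement, where strict negativity of $\mathsf{Y}$ plus boundedness of $\overline{\Theta}$ do the job. The difference is purely in the bookkeeping. You argue pathwise: fix $\omega$ in the full-measure event, obtain $\omega$-dependent constants $\delta(\omega)$, $\rho(\omega)$, $m(\omega)$, and conclude $\chi_0(\omega)\ge\min\{\rho/4,m\}>0$ directly. The paper instead keeps the radius $r$ and the thresholds $\del',\del''$ deterministic, introduces the bad event $G(r,\del')=\{\inf_{|\theta-\tz|\le r}\lam_{\min}(-\p_\theta^2\mathsf{Y}(\theta))<2\del'\}$, chooses $r,\del',\del''$ so that each bad event has probability at most $\ep/2$, and deduces $\pr[\chi_0<\del]\le\ep$ for every $\ep>0$, hence $\pr[\chi_0\le 0]=0$. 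For the stated a.s.\ conclusion your version is the more economical one; the paper's phrasing with deterministic constants and explicit probability bounds is the form one typically wants when such nondegeneracy estimates are later fed into quantitative arguments (e.g.\ polynomial-type large deviation inequalities in the $M$-estimation framework of \cite{Yos11}), but nothing in the lemma requires it. One cosmetic point: your Taylor step is cleanest if you use the integral form of the remainder (as the paper does), since $\mathsf{Y}\in\mcc^2(\Theta)$ and the continuity of the Hessian at the interior point $\tz$ then give the required uniform domination of the error term without any separate discussion of the $o(|\theta-\tz|^2)$ rate; your closing remark correctly identifies this as the only point needing care.
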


\begin{proof}
Fix an arbitrary $\ep>0$. It suffices to show that $\pr[\chi_0 < \del] < \ep$ for some $\del>0$.
Let
\begin{equation}
G(r,\del') := \left\{ \inf_{|\theta-\tz| \le r} \lam_{\min}\left( -\p_{\theta}^{2}\mathsf{Y}(\theta)\right) < 2\del' \right\}.
\nonumber
\end{equation}
Then, we can pick constants $r,\del'>0$ for which $\pr[G(r,\del')]<\ep/2$, hence
\begin{equation}
\pr[\chi_0 < \del] \le \frac{\ep}{2} + \pr\left[\{\chi_0 < \del\} \cap G(r,\del')^c\right].
\nonumber
\end{equation}

We have
\begin{align}
\inf_{\theta:\,\theta\ne\tz}\frac{-\mathsf{Y}(\theta)}{|\theta-\tz|^2}
&= \min\left\{
\inf_{\theta:\,|\theta - \tz| \le r}\frac{-\mathsf{Y}(\theta)}{|\theta-\tz|^2},~
\inf_{\theta:\,|\theta - \tz| > r}\frac{-\mathsf{Y}(\theta)}{|\theta-\tz|^2}
\right\} \nn\\
&=: \min\left\{ A'(r),~A''(r)\right\}.
\nonumber
\end{align}
We look at the rightmost side on $\omega\in G(r,\del')^c$.
Since $\mathsf{Y}(\tz)=0$ and $\p_\theta\mathsf{Y}(\tz)=0$, expanding $\mathsf{Y}(\theta)$ around $\tz$ concludes that $A'(r) \ge \del'$ on $G(r,\del')^c$:
\begin{align}
A'(r) 
&= \inf_{\theta:\,|\theta - \tz| \le r}
|\theta-\tz|^{-2}\left( -\int_0^1\int_0^1 v \p_\theta^2 \mathsf{Y}(\tz+uv(\theta-\tz)) dudv \right) [(\theta-\tz)^{\otimes 2}]
\nn\\
&\ge \frac12 \inf_{\theta:\,|\theta - \tz| \le r} \lam_{\min}\left( -\p_{\theta}^{2}\mathsf{Y}(\theta) \right) \ge \del'.
\nonumber
\end{align}
The compactness of $\overline{\Theta}$ ensures that $\sup_{\theta}|\theta-\tz|\le \mathsf{c}$ for some $\mathsf{c}>0$. Hence,
\begin{equation}
A''(r) \ge \mathsf{c}^{-2} \inf_{\theta:\,|\theta - \tz| > r } \left(-\mathsf{Y}(\theta) \right).
% =: \chi''_0(r),
\nonumber
\end{equation}
%where $\chi''_0(r)=\chi''_0(r;\omega)>0$ a.s. for each $r>0$. 
Pick any $\del''>0$ for which $\pr[\mathsf{c}^{-2} \inf_{\theta:\,|\theta - \tz| > r } \left(-\mathsf{Y}(\theta) \right) < \del''] < \ep/2$. 

Now, for any $\del<\min\{\del', \del''\}$,
\begin{align}
\pr[\chi_0 < \del] &\le \pr[G(r,\del')] 
+ \pr\left[ \mathsf{c}^{-2} \inf_{\theta:\,|\theta - \tz| > r } \left(-\mathsf{Y}(\theta) \right) < \del'' \right] \nn\\
&{}\qquad + \pr\left[ \min\{\del', \del''\} \le \chi_0 <\del \right]
\nn\\
&\le \ep/2 + \ep/2 + 0 = \ep.
\nonumber
\end{align}
This completes the proof.
\end{proof}

%%%
\subsubsection{Density-power GQLF}

Recall the definition of the density-power GQLF \eqref{hm:def_dp-H-0--}. 
% , where we are abbreviating as $\phi(z)=\phi_d(z)$. 
Let
\begin{equation}
    V(x,\theta) := S(x,\tz)^{-1/2} S(x,\theta) S(x,\tz)^{-1/2}.
\end{equation}
Then, $V(x,\tz) \equiv I_d$.
Through \eqref{hm:disconti.removed}, Lemma \ref{hm:lim.thm_lem-1} and \eqref{hm:def_dp-H-0--}, %and \eqref{hm:gauss_c.o.v.}, 
we get \eqref{hm:consistency-1} with $\ep_0=1/2$ and
\begin{align}
    \mbby_0(\theta;\lam) 
    &= \frac1T \int_0^T \int \bigg\{ \mathsf{d}^\star_t(\theta)^{-\lam/2}
    \left( \frac{1}{\lam} \phi\left(S^\star_t(\theta)^{-1/2} S^{\star\,1/2}_t z\right)^\lam - K_{\lam,d} \right) \nn\\
    &{}\qquad - {\mathsf{d}^{\star}_t}^{-\lam/2}
    \left( \frac{1}{\lam} \phi(z)^\lam - K_{\lam,d} \right) \bigg\} \phi(z)dz dt
    \nn\\
    &= \frac1T \int_0^T {\mathsf{d}^{\star}_t}^{-\lam/2}
    \bigg\{ 
    \left(\frac{1}{\lam} \int \phi\left(z;0,V^\star_t(\theta)\right)^\lam \phi(z)dz 
    - \frac{1}{\lam} \int \phi(z)^{\lam+1}dz \right)
    \nn\\
    &{}\qquad - K_{\lam,d}\left(\det(V^\star_t(\theta))^{-\lam/2} - 1\right) 
    \bigg\} dt.
    \label{hm:dp.pr-c1}
\end{align}
By means of the identity \eqref{hm:phi.power-identity}, we have
\begin{align}
    K_{\lam,d}\left(\det(V^\star_t(\theta))^{-\lam/2} - 1\right) 
    &= K_{\lam,d} \det(V^\star_t(\theta))^{-\lam/2} - K_{\lam,d} \nn\\
    &= \frac{1}{\lam+1} \int\phi(z;0,V^\star_t(\theta))^{\lam+1} dz - \frac{1}{\lam+1} \int\phi(z)^{\lam+1} dz.
    \nn
\end{align}
Substituting this into \eqref{hm:dp.pr-c1} and then continuing some calculations, we arrive at the key expression:
\begin{align}\label{hm:dp.pr-c2}
    \mbby_0(\theta;\lam) 
    &= -\frac1T \int_0^T (\lambda+1)^{-1} {\mathsf{d}^{\star}_t}^{-\lam/2} 
    \int \bigg\{ \phi(z;0,V^\star_t(\theta))^{\lam+1}
    \nn\\
    &{}\qquad     
    - \left(1+\frac{1}{\lambda}\right) \phi(z;0,V^\star_t(\theta))^{\lam} \phi(z) 
    + \frac{1}{\lambda} \phi(z)^{\lambda+1} \bigg\} dz dt.
\end{align}

Now, we can apply Lemma \ref{hm:lem_consistency} (with $\mathsf{Y}(\cdot) = \mbby_0(\cdot;\lam)$) to prove the consistency of the density-power GQMLE $\tes(\lam)$.
On the one hand, when $\lam_n\to 0$, the limit function $\lim_{\lam \to 0}\mbby_0(\cdot;\lam)$ becomes the usual quasi-Kullback-Leibler divergence, hence Lemma \ref{hm:lim.thm_lem-1} and Lemma \ref{hm:lem_consistency} under Assumption \ref{hm:A_iden} conclude the proof (see \cite{GenJac93}, and also \cite{UchYos13}).
On the other hand, when $\lam_n\equiv \lam>0$, we first note the basic property of the density-power divergence (\cite[Theorem 1]{BasHarHjoJon98}: we have the inequality $x^{\lambda+1}-(1+1/\lambda)x^{\lambda}+1/\lambda \ge 0$ for $x\ge 0$ with the equality holding if and only if $x=1$). This implies that the integral $\int \{\dots\}dz$ in \eqref{hm:dp.pr-c2} is strictly positive unless the densities $\phi(\cdot;0,V^\star_t(\theta))$ and $\phi(\cdot)$ are identical. The integral becomes zero if and only if $V^\star_{\cdot}(\theta)\equiv I_d$ identically a.s., which in turn holds if and only if $\theta=\tz$ under Assumption \ref{hm:A_iden}. 
% In view of the expression \eqref{hm:Y0_def}, 
Moreover, the positive definiteness of $-\p_\theta^2\mbby_0(\tz;\lam)$ is equivalent to that of $\Gam_0(\lam)$ of \eqref{hm:Gam_gen.form} below, which corresponds to \eqref{hm:Gam0_dp}; see Section \ref{hm:sec_Hes.mat}. 
% and will be seen in Section \ref{hm:sec_Hes.mat}.
% \begin{align}\label{hm:-p^2Y_gen.form}
%     -\p_\theta^2 \mbby_0(\theta;\lam) = -\frac1T \int_0^T \int_{\mbbr^r} \p_\theta^2 \zeta\left(\lx_t, \sig^{\star}_t z,\theta;\lam\right) \phi_r(z)dz dt.
% \end{align}
With these observations, we can apply Lemma \ref{hm:lem_consistency} to verify \eqref{hm:consistency-2}, hence the consistency $\tes(\lambda) \cip \tz$.

%%%
\subsubsection{H\"{o}lder-based GQLF}

% We turn to the H\"{o}lder-based GQLF.
We keep using the notation \eqref{hm:Y0_def}.
Analogously to the derivation of \eqref{hm:dp.pr-c2}, for \eqref{hm:def_ldp-H-0--} we can deduce \eqref{hm:consistency-1} with $\ep_0=1/2$ and
\begin{align}
    \mbby_0(\theta;\lam) 
    &= \frac1T \int_0^T \bigg\{ \mathsf{d}^\star_t(\theta)^{-\frac{\lam}{2(\lam+1)}}
    \frac{1}{\lam} \phi\left(S^\star_t(\theta)^{-1/2} S^{\star\,1/2}_t z\right)^\lam
    \nn\\
    &{}\qquad - {\mathsf{d}^{\star}_t}^{-\frac{\lam}{2(\lam+1)}}
    \frac{1}{\lam} \phi(z)^\lam \bigg\} \phi(z)dz \,dt
    \nn\\
    &= -\frac1T \int_0^T \frac{1}{\lam} 
    \mathsf{d}^\star_t(\theta)^{\frac{\lam^2}{2(\lam+1)}}
    {\mathsf{d}^\star_t}^{-\frac{\lam}{2}} 
    \bigg(
    \mathsf{d}^\star_t(\theta)^{-\frac{\lam^2}{2(\lam+1)}}
    {\mathsf{d}^{\star}_t}^{\frac{\lam^2}{2(\lam+1)}} \int \phi(z)^{\lam+1} dz
    \nn\\
    &{}\qquad 
    - \int \phi(z;0,V^\star_t(\theta))^{\lam} \phi(z) dz \bigg)dt.
    \label{hm:dp.pr-c3}
\end{align}
By \eqref{hm:phi.power-identity}, we have
\begin{equation}
    \int \phi(z;0,V^\star_t(\theta))^{\lam+1} dz = 
    \det(V^\star_t(\theta))^{-\frac{\lam^2}{2(\lam+1)}}
    \left( \int \phi(z)^{\lam+1} dz \right)^{\frac{\lam}{\lam+1}}
\end{equation}
Applying this identity together with the H\"{o}lder inequality \eqref{hm:holder.ineq} with $g = \phi(\cdot)$ and $f = \phi(\cdot;0,V^\star_t(\theta))$, we get
\begin{align}
    \int \phi(z;0,V^\star_t(\theta))^{\lam} \phi(z) dz
    &\le \mathsf{d}^\star_t(\theta)^{-\frac{\lam^2}{2(\lam+1)}}
    {\mathsf{d}^\star_t}^{\frac{\lam^2}{2(\lam+1)}} \int \phi(z)^{\lam+1} dz.
\end{align}
This implies that $\mbby_0(\theta;\lam)$ of \eqref{hm:dp.pr-c3} is a.s. non-positive. The identity $\mbby_0(\theta;\lam)=0$ a.s. holds if and only if the two densities $\phi(\cdot;0,V^\star_t(\theta))$ and $\phi(\cdot)$ are equal, which holds in turn if and only if 
$V^\star_{\cdot}(\theta) = I_d$ 
identically. This concludes that, $\mbby_0(\theta;\lam) = 0$ a.s. if and only if $\theta=\tz$. 
As in the density-power case, we can deduce the positive definiteness of $-\p_\theta^2\mbby_0(\tz;\lam)=\Gam_0(\lam)$ of \eqref{hm:Gam_gen.form} below, which now corresponds to \eqref{hm:Gam0_ldp}. %; see Section \ref{hm:sec_Hes.mat}.
By Lemma \ref{hm:lem_consistency} we conclude \eqref{hm:consistency-2}, followed by the consistency $\tes(\lambda) \cip \tz$.

%%%%%
\subsection{Hessian matrix}
\label{hm:sec_Hes.mat}

As a part of deriving \eqref{hm:joint.conv}, we need to compute the limits of $\Gam_n(\lam)$. To that end, we prove the following lemma.

\begin{lem}
\label{hm:lem_Gauss.integrals}
Let $A_1,A_2\in\mbbr^d\otimes \mbbr^d$ be symmetric and positive definite non-random matrices.
Then, we have the following identities ($\phi(z)$ denotes the $d$-dimensional standard normal density):
\begin{align}
& \int \phi(z)^{\lam+1} A_1[z^{\otimes 2}] dz
= K_{\lam,d} \trace(A_1),
\label{hm:lem_Gauss.integrals-1} \\
& \int \phi(z)^{\lam+1} A_1[z^{\otimes 2}] A_2[z^{\otimes 2}] dz
\nn\\
&{}\qquad =\frac{K_{\lam,d}}{\lam+1}
\left\{ \trace(A_1)\trace(A_2) + 2\trace(A_1 A_2) \right\}.
\label{hm:lem_Gauss.integrals-2}
\end{align}
\end{lem}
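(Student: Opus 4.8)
The plan is to reduce both identities to moments of a single centered Gaussian vector by absorbing the exponent $\lam+1$ into a covariance. First I would note that
\[
\phi(z)^{\lam+1} = (2\pi)^{-d(\lam+1)/2}\exp\Bigl(-\tfrac{\lam+1}{2}|z|^2\Bigr) = (\lam+1)K_{\lam,d}\,\phi_d\Bigl(z;0,\tfrac{1}{\lam+1}I_d\Bigr),
\]
where the constant $(\lam+1)K_{\lam,d}$ is fixed by requiring the right-hand Gaussian density to integrate to one (equivalently, by \eqref{hm:phi.power-identity} with $a=\lam+1$). Hence, for any polynomially bounded $F$,
\[
\int \phi(z)^{\lam+1}F(z)\,dz = (\lam+1)K_{\lam,d}\,\E\bigl[F(W)\bigr],\qquad W\sim N_d\Bigl(0,\tfrac{1}{\lam+1}I_d\Bigr),
\]
so that both integrals become Gaussian moment computations against $\Sig := \cov(W) = \tfrac{1}{\lam+1}I_d$.

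For \eqref{hm:lem_Gauss.integrals-1}, write $A_1[z^{\otimes 2}] = z^\top A_1 z$ and use $\E[W^\top A_1 W] = \trace(A_1\Sig) = \tfrac{1}{\lam+1}\trace(A_1)$; multiplying by the prefactor $(\lam+1)K_{\lam,d}$ cancels the $(\lam+1)^{-1}$ and leaves $K_{\lam,d}\trace(A_1)$, as claimed.

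For \eqref{hm:lem_Gauss.integrals-2}, the substantive step is the fourth-moment identity
\[
\E\bigl[(W^\top A_1 W)(W^\top A_2 W)\bigr] = \trace(A_1\Sig)\trace(A_2\Sig) + 2\,\trace(A_1\Sig A_2\Sig).
\]
I would obtain this by expanding $(W^\top A_1 W)(W^\top A_2 W) = \sum_{i,j,k,l}(A_1)_{ij}(A_2)_{kl}W_iW_jW_kW_l$ and inserting Isserlis' (Wick's) formula $\E[W_iW_jW_kW_l] = \Sig_{ij}\Sig_{kl} + \Sig_{ik}\Sig_{jl} + \Sig_{il}\Sig_{jk}$: using the symmetry of $A_1,A_2,\Sig$, the first pairing contracts to $\trace(A_1\Sig)\trace(A_2\Sig)$ and the remaining two pairings each contract to $\trace(A_1\Sig A_2\Sig)$. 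Substituting $\Sig = \tfrac{1}{\lam+1}I_d$ gives $(\lam+1)^{-2}\{\trace(A_1)\trace(A_2) + 2\trace(A_1A_2)\}$, and the prefactor $(\lam+1)K_{\lam,d}$ then produces $\tfrac{K_{\lam,d}}{\lam+1}\{\trace(A_1)\trace(A_2) + 2\trace(A_1A_2)\}$.

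The only point requiring care, and the main obstacle, is this fourth-moment identity. If one prefers to avoid Isserlis' theorem, the isotropy of $W$ gives an equally elementary route: diagonalize $A_1 = O^\top\Lam O$ with $O$ orthogonal, note $OW$ has the same law as $W$, and reduce to $\E\bigl[(V^\top\Lam V)(V^\top B V)\bigr]$ with $\Lam$ diagonal and $B := OA_2O^\top$; this needs only the scalar moments $\E[W_a^4] = 3(\lam+1)^{-2}$ and $\E[W_a^2 W_b^2] = (\lam+1)^{-2}$ ($a\ne b$), after which $\trace(\Lam B) = \trace(A_1A_2)$ recovers the cross term. Either way the computation is routine; the bookkeeping of the factor $2$ on $\trace(A_1A_2)$ is where the symmetry conventions must be tracked carefully.
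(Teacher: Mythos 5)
Your proof is correct and takes essentially the same route as the paper: your rescaling identity $\int \phi(z)^{\lam+1}F(z)\,dz = (\lam+1)K_{\lam,d}\,\E[F(W)]$ with $W\sim N_d(0,(\lam+1)^{-1}I_d)$ is exactly the paper's change of variables, and the fourth-moment identity $\E[(W^\top A_1W)(W^\top A_2W)]=\trace(A_1\Sig)\trace(A_2\Sig)+2\trace(A_1\Sig A_2\Sig)$ is precisely the formula the paper quotes from Magnus--Neudecker. The only difference is that you derive that identity via Isserlis' theorem (or diagonalization) rather than citing it, which is a harmless elaboration.
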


\begin{proof}
By change of variables,
\begin{equation}
    \int \phi(z)^{\lam+1} f(z) dz = 
    (\lam+1)K_{\lam,d} \int f\left(\frac{y}{\sqrt{\lam+1}}\right) \phi(y)dy
\end{equation}
for any measurable function $f$ for which the integrals are well-defined. 
Then, \eqref{hm:lem_Gauss.integrals-1} is trivial and \eqref{hm:lem_Gauss.integrals-2} follows on applying the following formula \cite[Theorem 4.2(i)]{MagNeu79}:
\begin{equation}
    \int A[y^{\otimes 2}] \, B[y^{\otimes 2}] \, \phi(y)dy = \trace(A) \trace(B) + 2 \trace(AB),
\end{equation}
valid for any $d\times d$-symmetric $A$ and $B$.
\end{proof}

Building on \eqref{hm:disconti.removed} and Lemma \ref{hm:lim.thm_lem-2}, straightforward (yet lengthy) computations will give the expressions of the limit in probability:
\begin{align}\label{hm:Gam_gen.form}
    % \Gam_n(\lam) \cip 
    \Gam_0(\lam) := -\frac1T \int_0^T \int_{\mbbr^r} \p_\theta^2 \zeta\left(\lx_t, \sig^{\star}_t z,\tz;\lam\right) \phi_r(z)dz dt
\end{align}
of the normalzied Hessian matrix $\Gam_n(\lam) = -n^{-1}\p_\theta^2 U^\star_n(\tz;\lam)$; of course, we have $\Gam_0(\lam) = -\p_\theta^2 \mbby_0(\tz;\lam)$. 
% (recall \eqref{hm:-p^2Y_gen.form}). 
To derive the expression of $\Gam_0(\lam)$ for the density-power and H\"{o}lder-based GQLFs, it is convenient to introduce some notational abbreviations:
% $\mathsf{t}_k = \trace(S^{-1}\dot{S}_k)$, $\mathsf{v}_{kl} = \trace(S^{-1}\dot{S}_k S^{-1}\dot{S}_l)$, and $\mathsf{u}_{kl} = \trace(S^{-1}\ddot{S}_{kl})$ 
\begin{align}
& \mathsf{t}_k = \trace(S^{-1}\dot{S}_k), \quad \mathsf{v}_{kl} = \trace(S^{-1}\dot{S}_k S^{-1}\dot{S}_l), \quad \mathsf{u}_{kl} = \trace(S^{-1}\ddot{S}_{kl}),   
\end{align}
where $S:=S^\star_t$, $\dot{S}_k:=\p_{\theta_k}S^\star_t$, and $\ddot{S}_{kl}:=\p_{\theta_k}\p_{\theta_l}S^\star_t$. 
Also defining $\beta$, $\gam$, $\dot{\beta}_k$, $\dot{\gam}_k$, and so on similarly, we get the expressions
\begin{align}
\dot{\beta}_k &= -\frac{\lam}{2} \beta \,\mathsf{t}_k, \qquad 
\ddot{\beta}_{kl} = \frac{\lam^2}{4} \beta \, \mathsf{t}_k \mathsf{t}_l - \frac{\lam}{2}\beta\,(\mathsf{u}_{kl} - \mathsf{v}_{kl}), \nn\\
\dot{\gam}_k &= -\frac{\lam}{2(\lam+1)} \gam \, \mathsf{t}_k, \qquad 
\ddot{\gam}_{kl} = \left(\frac{\lam}{2(\lam+1)}\right)^2 \gam \, \mathsf{t}_k \mathsf{t}_l - \frac{\lam}{2(\lam+1)} \gam\, (\mathsf{u}_{kl} - \mathsf{v}_{kl}).
\nn
\end{align}
Further, let $\mathsf{A}_k := \trace(S^{-1}\dot{S}_k S^{-1})$ and then for $z\in\mbbr^d$ and $\dot{\mathsf{A}}_{kl}:=\p_{\theta_l}\mathsf{A}_{k}$, we have
\begin{align}
    \dot{\mathsf{A}}_{kl} = -S^{-1} \dot{S}_l S^{-1} \dot{S}_k S^{-1} -S^{-1} \dot{S}_k S^{-1} \dot{S}_l S^{-1} + S^{-1} \ddot{S}_{kl} S^{-1}.
\end{align}

For the density-power GQLF, the second-order derivative of the summands of \eqref{hm:U.star_notation} takes the following form: 
letting $y:=\sig^\star_{j-1} z$,
\begin{align}
    \p_{\theta_k}\p_{\theta_l}\zeta^\star_{j-1}(\sig^\star_{j-1}z,\tz;\lam)
    &= \beta\left(\frac{\lam}{4} \, \mathsf{t}_k \mathsf{t}_l - \frac{1}{2}\,(\mathsf{u}_{kl} - \mathsf{v}_{kl})\right)(\vp_j(\theta)^\lam - \lam K_{\lam,d})
    \nn\\
    &{}\qquad -\frac{\lam}{4} \beta \,\mathsf{t}_k \mathsf{A}_l [y^{\otimes 2}] \vp_j(\theta)^\lam
    -\frac{\lam}{4} \beta \,\mathsf{t}_l \mathsf{A}_k [y^{\otimes 2}] \vp_j(\theta)^\lam
    \nn\\
    &{}\qquad + \frac{\lam}{4} \beta \,\mathsf{A}_k [y^{\otimes 2}] \mathsf{A}_l [y^{\otimes 2}] \vp_j(\theta)^\lam 
    + \frac{1}{2} \beta\, \dot{\mathsf{A}}_{kl} [y^{\otimes 2}] \vp_j(\theta)^\lam.
\end{align}
Thus, by Lemma \ref{hm:lim.thm_lem-2} we get the expression of $\Gam_{0,kl}(\lambda)$ given by \eqref{hm:Gam0_dp} as the limit in probability of the $(k,l)$-entry of $\Gam_n(\lam)$.
As for the H\"{o}lder-based GQLF, we have
\begin{align}
    & \p_{\theta_k}\p_{\theta_l}\zeta^\star_{j-1}(\sig^\star_{j-1}z,\tz;\lam)
    \nn\\
    &= \gam \left(\frac{\lam}{4(\lam+1)^2} \mathsf{t}_k \mathsf{t}_l
    -\frac{1}{2(\lam+1)} (\mathsf{u}_{kl} - \mathsf{v}_{kl})
    \right) \vp_j(\theta)^\lam 
    \nn\\
    &{}\qquad -\frac{\lam}{4(\lam+1)} \gam\, \mathsf{t}_k \mathsf{A}_l [y^{\otimes 2}] \vp_j(\theta)^\lam 
    -\frac{\lam}{4(\lam+1)} \gam \, \mathsf{t}_l \mathsf{A}_k [y^{\otimes 2}] \vp_j(\theta)^\lam 
    \nn\\
    &{}\qquad + \gam \left(
    \frac{\lam}{4} \mathsf{A}_k [y^{\otimes 2}] \mathsf{A}_l [y^{\otimes 2}] \vp_j(\theta)^\lam 
    + \frac12 \dot{\mathsf{A}}_{kl} [y^{\otimes 2}] \vp_j(\theta)^\lam
    \right).
\end{align}
Again, Lemma \ref{hm:lim.thm_lem-2} leads to the expression $\Gam_{0,kl}(\lambda)$ of \eqref{hm:Gam0_ldp}. Obviously, both $\Gam_0(\lambda)=(\Gam_{0,kl}(\lambda))_{k,l=1}^{p}$ corresponding to \eqref{hm:Gam0_dp} and \eqref{hm:Gam0_ldp} fulfills that
\begin{equation}
\lim_{\lambda\downarrow 0}\Gam_0(\lambda) = \mci(\tz)
% \frac{1}{2T} \int_0^t \trace\left( {S^\star_t}^{-1} (\p_{\theta}S^\star_t) {S^\star_t}^{-1} (\p_{\theta}S^\star_t) \right) dt
\quad\text{a.s.}
\label{hm:Gam_lim}
\end{equation}
for $\mci(\tz)$ given in \eqref{hm:def_FI.lim}.

Although this limit is formally the same as in the case of continuous semimartingale regression \cite{UchYos13}, the explanatory process $\lx$ may contain jumps while all spikes are removed.

%%%%%
\subsection{Gradient}

To conclude \eqref{hm:joint.conv}, it suffices to show the stable convergence in law of $\D_n^\star(\lam)$, which is almost done by Lemma \ref{hm:common.stable.CLT}.
\hmrev{
Let us recall the shorthands $\zeta^\star_j(\theta;\lam)=\zeta(\lx_{t_{j-1}},\sig^\star_{j-1}Z_j,\theta;\lam)$, \eqref{hm:def_dp-H-0--}, \eqref{hm:def_ldp-H-0--}, and also the notation introduced in Section \ref{hm:sec_Hes.mat}. The additional condition ``$\E^{j-1}[\p_\theta\zeta^\star_j(\tz;\lam)]=0$ a.s. for any $\lam>0$'' therein can be shown as follows through the identity \eqref{hm:intro.ldp-4} and Lemma \ref{hm:lem_Gauss.integrals}.
\begin{itemize}
    \item For the density-power case,
    \begin{align}
        & \E^{j-1}[\p_{\theta_k}\zeta^\star_j(\tz;\lam)]
        \nn\\
        &= \E^{j-1}\left[\dot{\beta}_k \left(\frac{1}{\lam}\vp_j^\lam - K_{\lam,d}\right) + \frac12 \beta_k \vp_j^\lam (S^{-1}\dot{S}_k S^{-1}) [(\sig^\star_{j-1} Z_j)^{\otimes 2}]\right]
        \nn\\
        &= -\frac12 \beta_k \mathsf{t}_k \E^{j-1}[\vp_j^\lam] + \frac{\lam}{2} \beta_k K_{\lam,d}\,\mathsf{t}_k
        \nn\\
        &{}\qquad + \frac12 \beta_k \E^{j-1}\left[ \vp_j^\lam (S^{-1}\dot{S}_k S^{-1}) [(\sig^\star_{j-1} Z_j)^{\otimes 2}] \right] \nn\\
        &= -\frac12 \beta_k K_{\lam,d} (\lam+1) \mathsf{t}_k + \frac{1}{2}\beta_k K_{\lam,d} \lam \mathsf{t}_k + \frac{1}{2}\beta_k K_{\lam,d} \mathsf{t}_k = 0\quad \text{a.s.}
    \end{align}
    \item Similarly, as for the H\"{o}lder-based case,
    \begin{align}
        & \E^{j-1}[\p_{\theta_k}\zeta^\star_j(\tz;\lam)]
        \nn\\
        &= \E^{j-1}\left[ \dot{\gam}_k \frac{1}{\lam}\vp_j^\lam + \frac{1}{2} \gam_k \vp_j^\lam (S^{-1}\dot{S}_k S^{-1}) [(\sig^\star_{j-1} Z_j)^{\otimes 2}] \right]
        \nn\\
        &= -\frac12 \gam_k \mathsf{t}_k K_{\lam,d} + \frac12 \gam_k \mathsf{t}_k K_{\lam,d} = 0 \quad \text{a.s.}
    \end{align}
\end{itemize}
}
\noindent
Then, it remains to compute $\Sig_0(\lam)=\Sig_{0,T}(\lam)$ given by \eqref{hm:def_Sigma}; by the definition, it is obvious that $\Sig_0(\lam)$ is a.s. non-negative definite.
Direct calculations give \eqref{hm:Sig0_dp} in the density-power case, and \eqref{hm:Sig0_ldp} in the H\"{o}lder-based case. In either case, it is obvious that
\begin{equation}
    \lim_{\lambda\downarrow 0}\Sig_0(\lambda) = \mci(\tz)
    \quad\text{a.s.}    
\end{equation}

%%%%%
% \section{Supplementary material}
\section{Additional simulations}
\label{sec:add.sim}

\subsection{Results for Section \ref{se:simu22}} \label{suppl_1}
% In Section \ref{se:simu22}, we present the results of parameter estimation; however, some of the results for case (ii) are omitted. The omitted results are provided below.
Here, we present the results of parameter estimation results for case (ii).

Figures \ref{plot33} and \ref{holplot33} give the histograms of $u_{1,n}(\lambda)$, $u_{2,n}(\lambda)$, and $u_{3,n}(\lambda)$ in the case of $\lambda=0.7$. 
Figure \ref{plot33} is based on the 1000th sample data and density-power GQMLE, and Figure \ref{holplot33} is based on the 1000th sample data and H\"{o}lder-based GQMLE.
Moreover, Tables \ref{ciratio32} and \ref{holciratio32} summarize the frequencies with which the true values of $\theta_{1}$, $\theta_{2}$, and $\theta_{3}$ are included in the 95\% confidence intervals in each $\lambda$ for density-power and H\"{o}lder-based GQMLEs, respectively.
From these figures and tables, we observe similar tendencies as in case (i).

%%%

\begin{figure}[ht]
\begin{tabular}{c}

\begin{minipage}{0.32 \hsize}
\begin{center}
\includegraphics[scale=0.25]{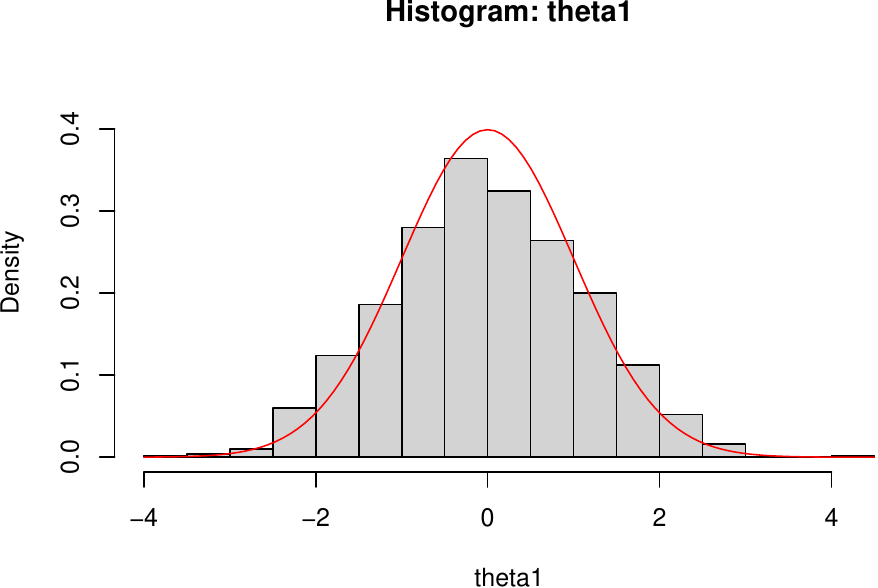}
\end{center}
\end{minipage}

\begin{minipage}{0.32 \hsize}
\begin{center}
\includegraphics[scale=0.25]{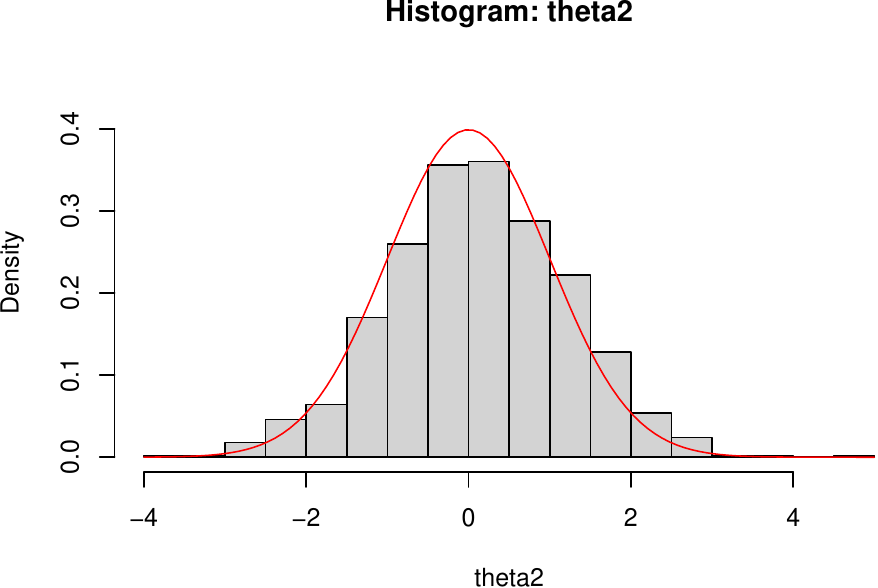}
\end{center}
\end{minipage}

\begin{minipage}{0.32 \hsize}
\begin{center}
\includegraphics[scale=0.25]{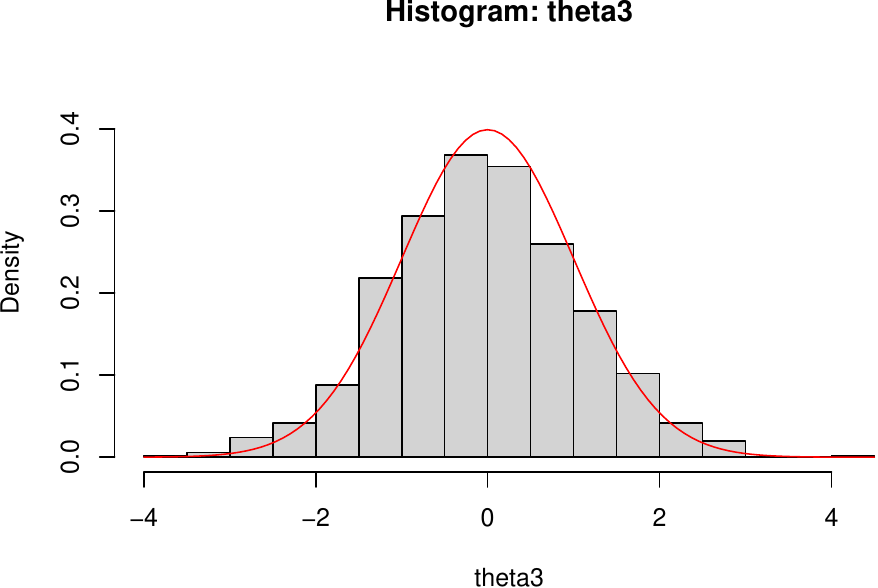}
\end{center}
\end{minipage}

\end{tabular}
\caption{Histograms of $u_{1,n}(\lambda)$, $u_{2,n}(\lambda)$, and $u_{3,n}(\lambda)$ corresponding to the density-power estimator ($q=0.01n$, $n=5000$, $\lambda=0.7$).}
\label{plot33}
\end{figure}

%%%

\begin{figure}[ht]
\begin{tabular}{c}

\begin{minipage}{0.32 \hsize}
\begin{center}
\includegraphics[scale=0.25]{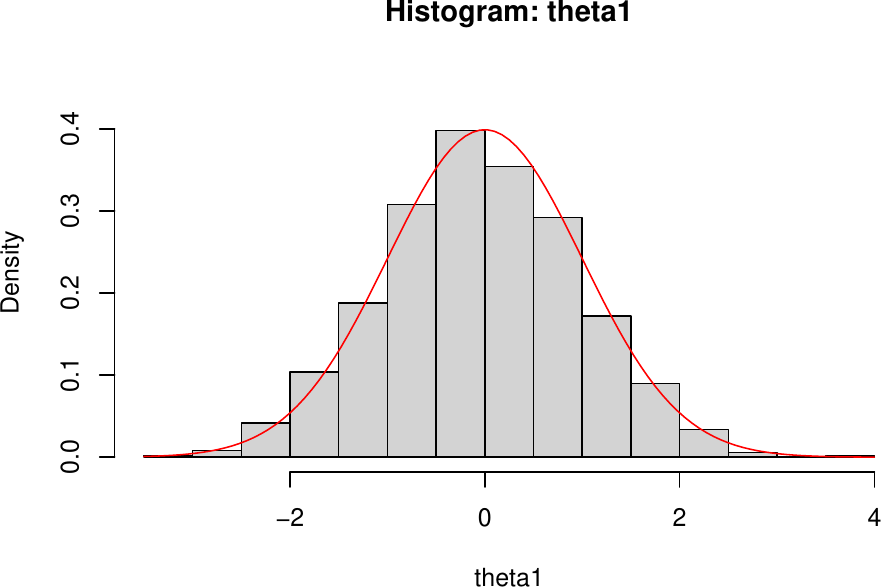}
\end{center}
\end{minipage}

\begin{minipage}{0.32 \hsize}
\begin{center}
\includegraphics[scale=0.25]{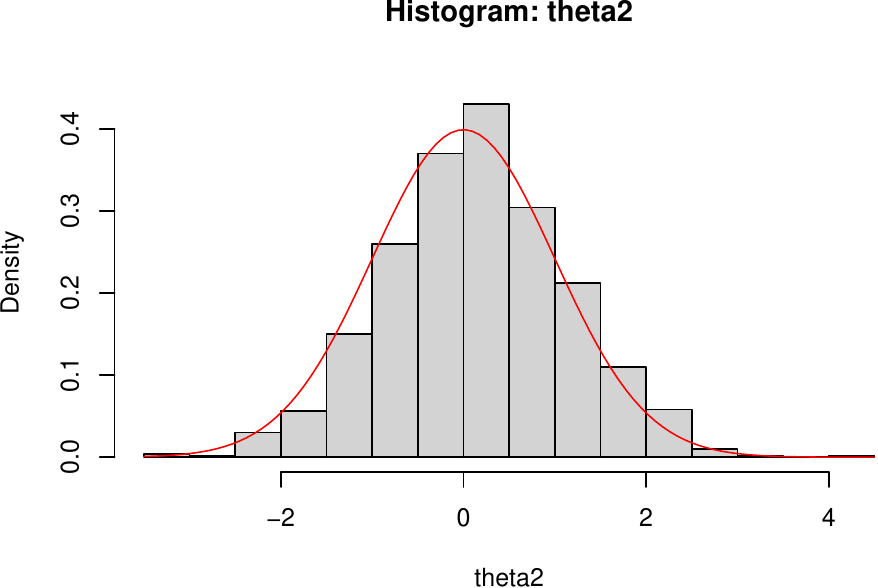}
\end{center}
\end{minipage}

\begin{minipage}{0.32 \hsize}
\begin{center}
\includegraphics[scale=0.25]{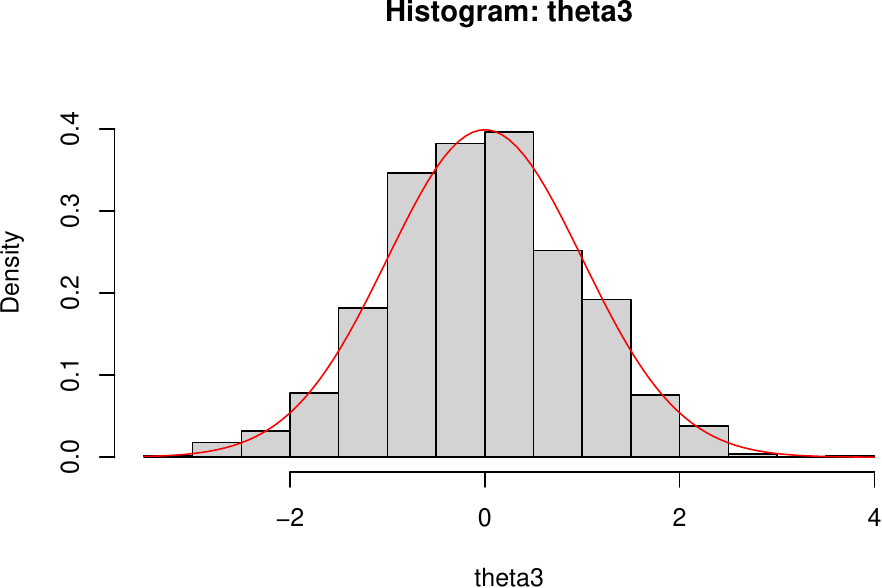}
\end{center}
\end{minipage}

\end{tabular}
\caption{Histograms of $u_{1,n}(\lambda)$, $u_{2,n}(\lambda)$, and $u_{3,n}(\lambda)$ corresponding to the H\"{o}lder-based estimator ($q=0.01n$, $n=5000$, $\lambda=0.7$).}
\label{holplot33}
\end{figure}

%%%

\begin{table}[ht]
\begin{center}
\caption{Frequency that the 95\% confidence interval corresponding to the density-power estimator contains the true value in each $\lambda$ ($q=0.01n$, $n=5000$).}
%\scalebox{0.9}[0.9]{
\begin{tabular}{r | r r r r r r r r r r} \hline
& \multicolumn{9}{c}{} \\[-3mm]
$\lambda$ & 0.1 & 0.2 & 0.3 & 0.4 & 0.5 & 0.6 & 0.7 & 0.8 & 0.9 & 1 \\[1mm] \hline
& \multicolumn{9}{c}{} \\[-3mm]
$\theta_{1}$ & 0.91 & 0.92 & 0.93 & 0.93 & 0.93 & 0.94 & 0.94 & 0.94 & 0.94 & 0.94 \\[1mm]
$\theta_{2}$ & 0.91 & 0.92 & 0.92 & 0.93 & 0.94 & 0.94 & 0.94 & 0.93 & 0.93 & 0.94 \\[1mm]
$\theta_{3}$ & 0.91 & 0.93 & 0.93 & 0.94 & 0.94 & 0.94 & 0.95 & 0.95 & 0.95 & 0.95 \\[1mm] \hline
\end{tabular}
%}
\label{ciratio32}
\end{center}
\end{table}

%%%

\begin{table}[ht]
\begin{center}
\caption{Frequency that the 95\% confidence interval corresponding to the H\"{o}lder-based estimator contains the true value in each $\lambda$ ($q=0.01n$, $n=5000$).}
%\scalebox{0.9}[0.9]{
\begin{tabular}{r | r r r r r r r r r r} \hline
& \multicolumn{9}{c}{} \\[-3mm]
$\lambda$ & 0.1 & 0.2 & 0.3 & 0.4 & 0.5 & 0.6 & 0.7 & 0.8 & 0.9 & 1 \\[1mm] \hline
& \multicolumn{9}{c}{} \\[-3mm]
$\theta_{1}$ & 0.94 & 0.95 & 0.95 & 0.95 & 0.94 & 0.94 & 0.94 & 0.94 & 0.94 & 0.93 \\[1mm]
$\theta_{2}$ & 0.94 & 0.94 & 0.95 & 0.94 & 0.95 & 0.94 & 0.94 & 0.94 & 0.94 & 0.94 \\[1mm]
$\theta_{3}$ & 0.94 & 0.95 & 0.94 & 0.95 & 0.95 & 0.95 & 0.95 & 0.95 & 0.94 & 0.94 \\[1mm] \hline
\end{tabular}
%}
\label{holciratio32}
\end{center}
\end{table}

%%%
\subsection{Results for Section \ref{se:simu6}} \label{suppl_2}
In Section \ref{se:simu6}, we propose the procedure for clustering into jump and continuous parts using one sample data set and the clustering simulations are conducted for the density-power GQMLE with $\lambda=0.1,0.2,0.5$, and $0.9$.
Since only the results for $\lambda=0.2$ are presented in Section \ref{se:simu6}, we provide below the results for $\lambda=0.1, 0.5$, and $0.9$.

Figures \ref{cluster3} and \ref{cluster5} show the clustering results for $\lambda=0.1, 0.5$, and $0.9$ using the data in Section \ref{se:simu12}.
The results in Figure \ref{cluster3} for $\lambda=0.1,0.5$, and $0.9$ are the same as those for $\lambda = 0.2$.
By comparing the data with the points included in $\mathfrak{D}_{n}$, the proportion of correctly identified noise points is $29/39$ for $\lambda=0.2$ and $28/39$ for $\lambda=0.5$ and $0.9$.

Figures \ref{cluster4} and \ref{cluster6} show the clustering results for $\lambda=0.1, 0.5$, and $0.9$ using the data in Section \ref{se:simu22} (i).
In Figure \ref{cluster4}, the number of elements in $\mathfrak{D}_{n}$ increases abruptly at $K=6$ for $\lambda=0.1$ and at $K=7$ for $\lambda=0.5$ and $0.9$.
In the $K$-means corresponding to Figure \ref{cluster4}, the number of elements in $\mathfrak{D}_n$ is $38$ for all $\lambda$, which is the same as that for $\lambda=0.2$.

%%%
\begin{figure}[t]
\begin{tabular}{c}

\begin{minipage}{0.32 \hsize}
\begin{center}
\includegraphics[scale=0.25]{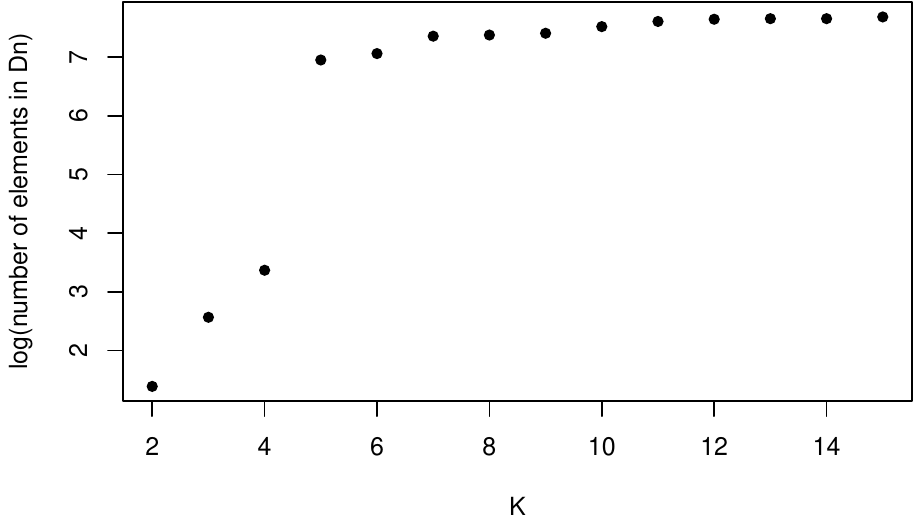}
\end{center}
\end{minipage}

\begin{minipage}{0.32 \hsize}
\begin{center}
\includegraphics[scale=0.25]{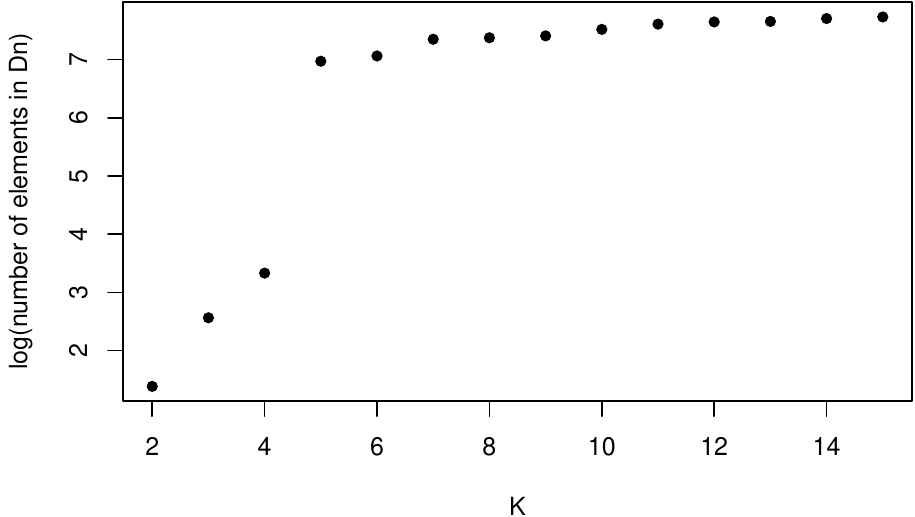}
\end{center}
\end{minipage}

\begin{minipage}{0.32 \hsize}
\begin{center}
\includegraphics[scale=0.25]{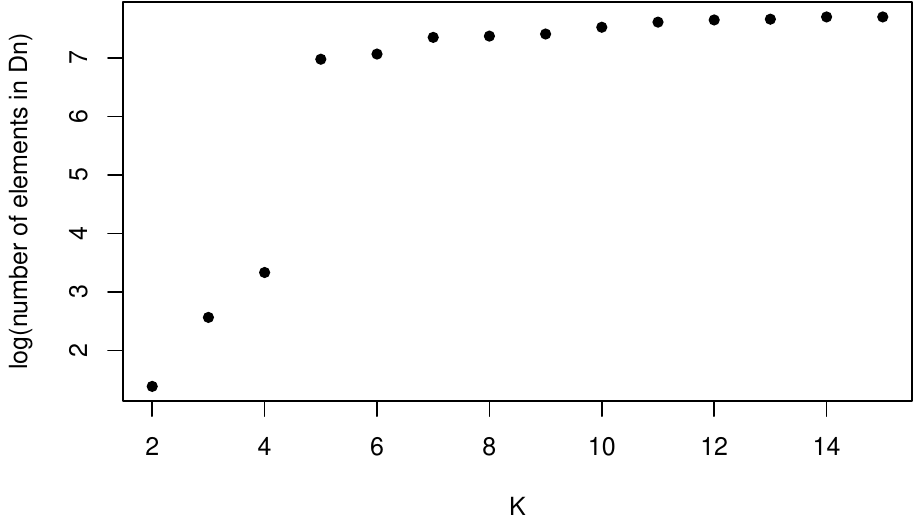}
\end{center}
\end{minipage}

\end{tabular}
\caption{Logarithmic of the number of elements in $\mathfrak{D}_{n}$ in $K$-means for each $K$ using the data in Section \ref{se:simu12} (left: $\lambda=0.1$, center: $\lambda=0.5$, right: $\lambda=0.9$).}
\label{cluster3}
\end{figure}

%%%
\begin{figure}[t]
\begin{tabular}{c}

\begin{minipage}{0.32 \hsize}
\begin{center}
\includegraphics[scale=0.25]{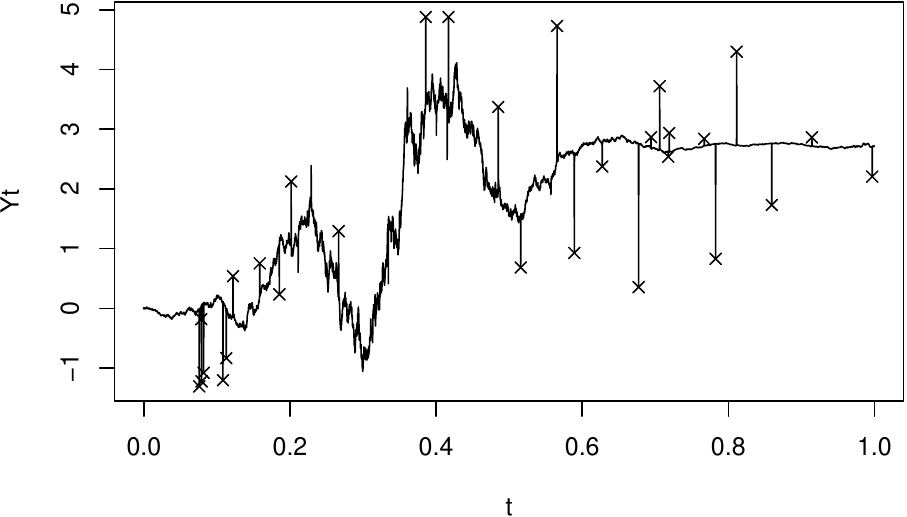}
\end{center}
\end{minipage}

\begin{minipage}{0.32 \hsize}
\begin{center}
\includegraphics[scale=0.25]{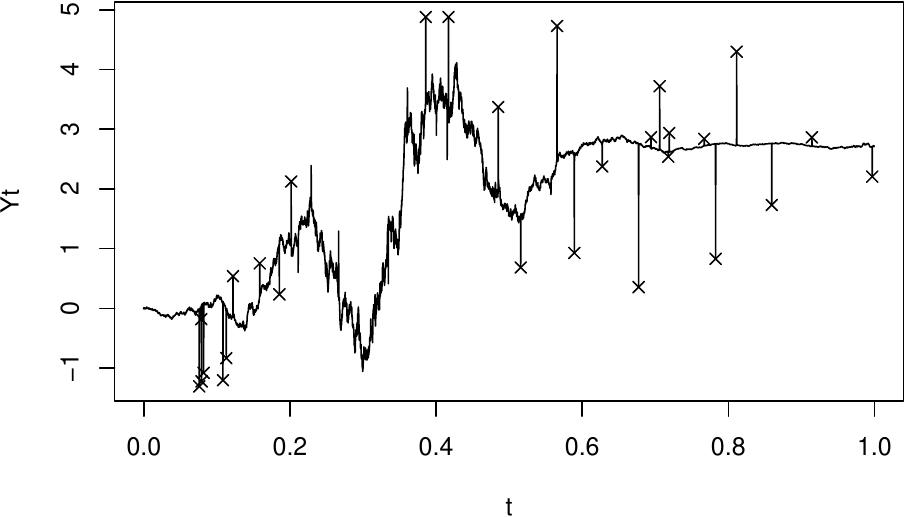}
\end{center}
\end{minipage}

\begin{minipage}{0.32 \hsize}
\begin{center}
\includegraphics[scale=0.25]{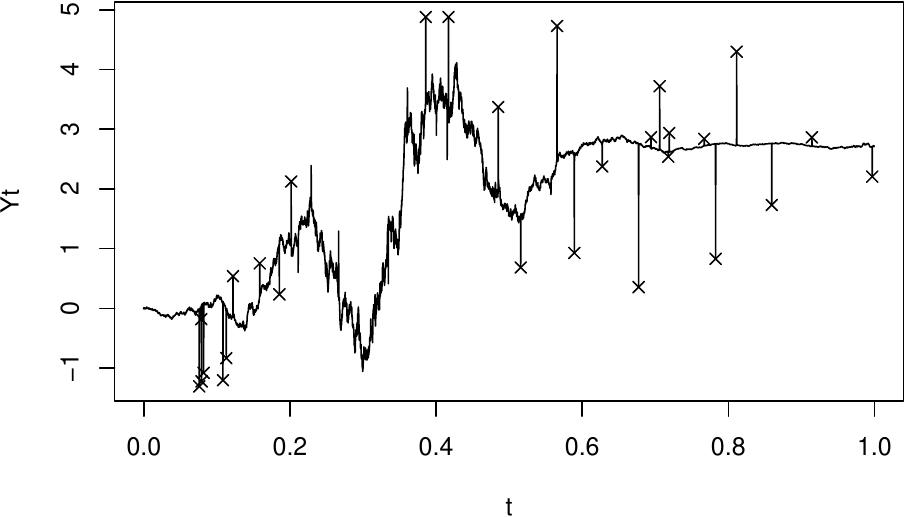}
\end{center}
\end{minipage}

\end{tabular}
\caption{The path of data from Section \ref{se:simu12} used for clustering with the results of $K$-means (left: $\lambda=0.1$, center: $\lambda=0.5$, right: $\lambda=0.9$). The cross mark points mean elements of $\mathfrak{D}_{n}$.}
\label{cluster5}
\end{figure}

%%%

\begin{figure}[t]
\begin{tabular}{c}

\begin{minipage}{0.32 \hsize}
\begin{center}
\includegraphics[scale=0.25]{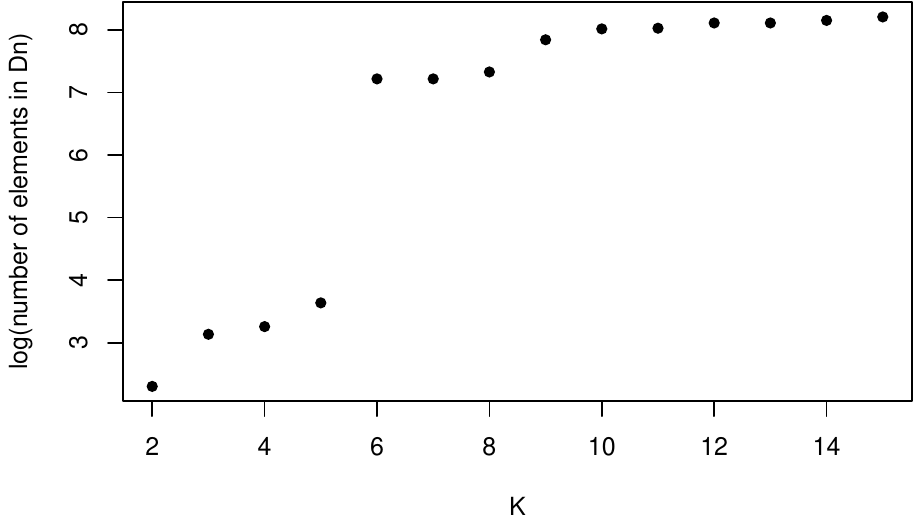}
\end{center}
\end{minipage}

\begin{minipage}{0.32 \hsize}
\begin{center}
\includegraphics[scale=0.25]{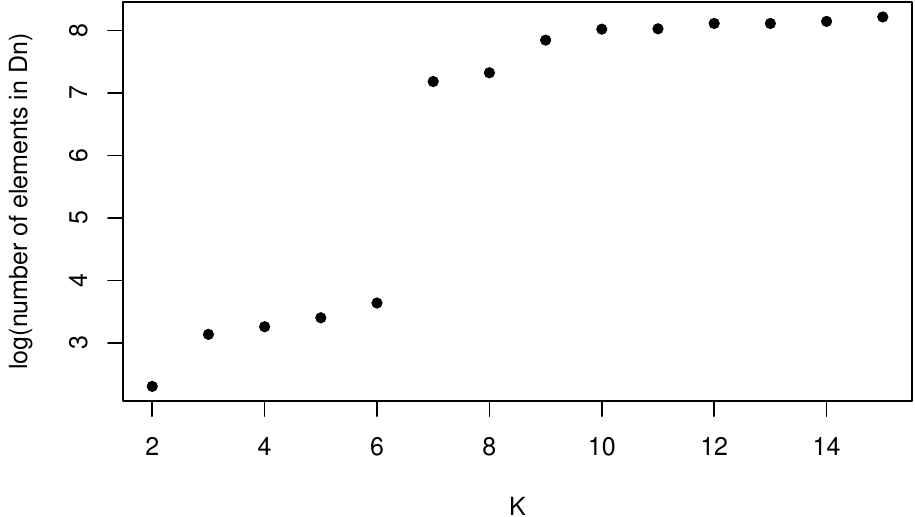}
\end{center}
\end{minipage}

\begin{minipage}{0.32 \hsize}
\begin{center}
\includegraphics[scale=0.25]{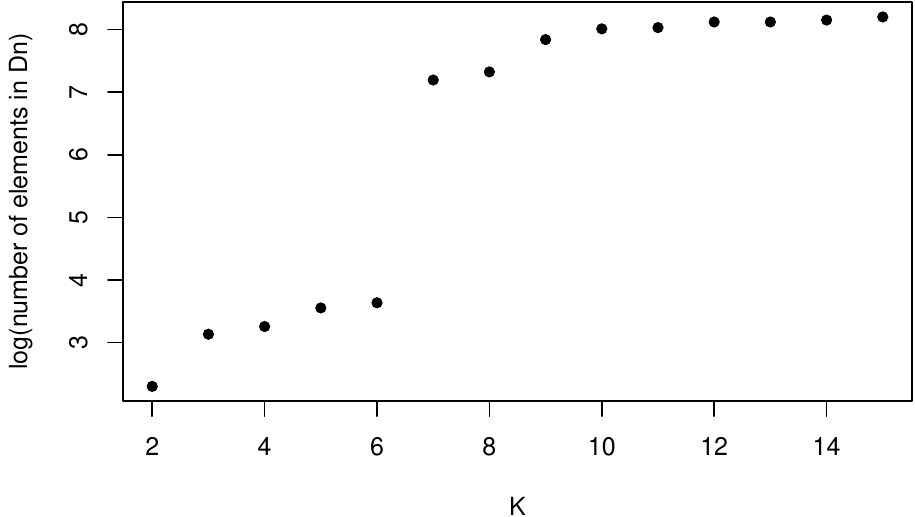}
\end{center}
\end{minipage}

\end{tabular}
\caption{Logarithmic of the number of elements in $\mathfrak{D}_{n}$ in $K$-means for each $K$ using the data in Section \ref{se:simu22} (i)}
\label{cluster4}
\end{figure}

%%%
\begin{figure}[t]
\begin{tabular}{c}

\begin{minipage}{0.32 \hsize}
\begin{center}
\includegraphics[scale=0.25]{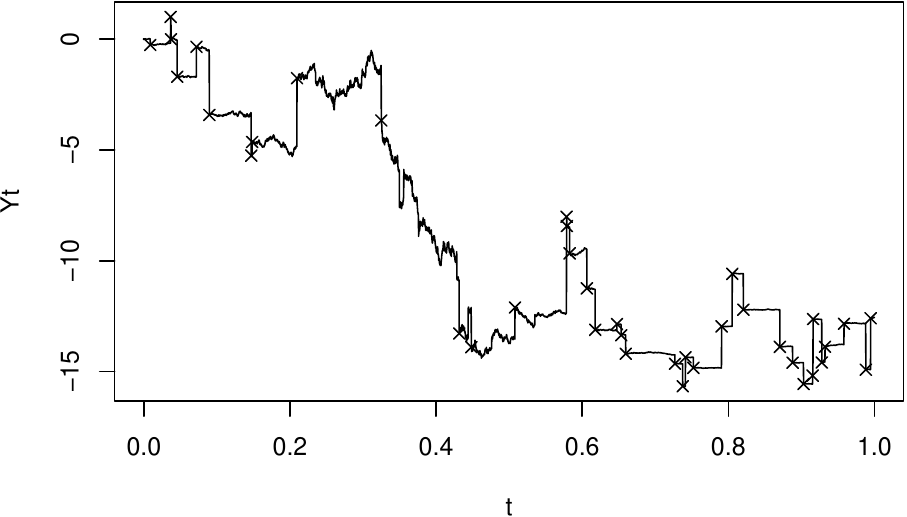}
\end{center}
\end{minipage}

\begin{minipage}{0.32 \hsize}
\begin{center}
\includegraphics[scale=0.25]{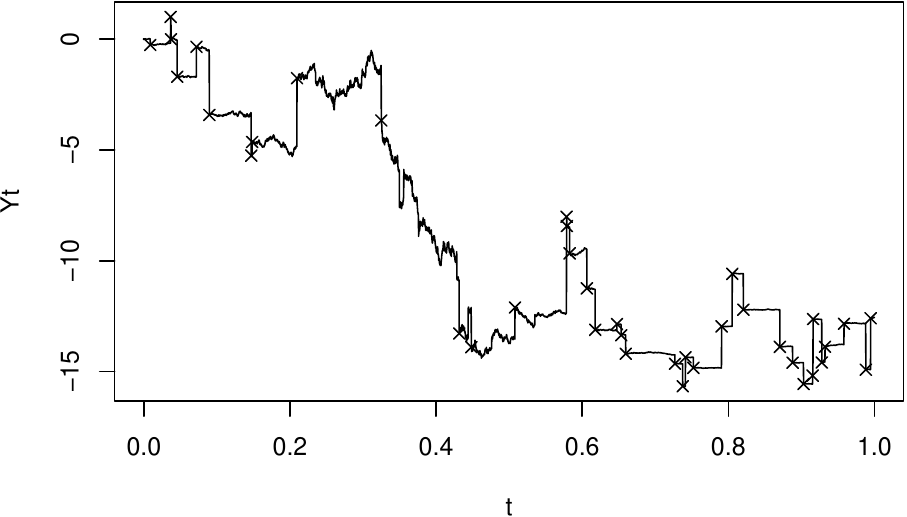}
\end{center}
\end{minipage}

\begin{minipage}{0.32 \hsize}
\begin{center}
\includegraphics[scale=0.25]{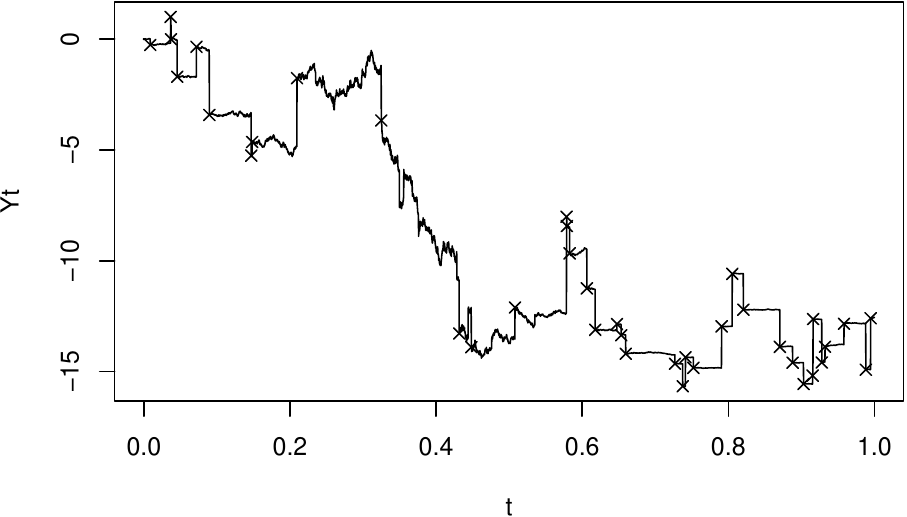}
\end{center}
\end{minipage}

\end{tabular}
\caption{The path of data from Section \ref{se:simu22} (i) used for clustering with the results of $K$-means (left: $\lambda=0.1$, center: $\lambda=0.5$, right: $\lambda=0.9$). The cross mark points mean elements of $\mathfrak{D}_{n}$.}
\label{cluster6}
\end{figure}

%%%%%
%%%%%
\bigskip

\noindent
\textbf{Acknowledgements.} 
The authors are grateful to the two reviewers, whose comments led to substantial improvement of the presentation. We also thank Mr Yoshiyuki Miyamori for his helpful comments on an earlier version.
This work was partially supported by JST CREST Grant Number JPMJCR2115 and JSPS KAKENHI Grant Number 23K22410 (HM), Japan.

%%\clearpage
\bigskip 
\bibliographystyle{abbrv} % plain,jplain,abbrv,unsrt,alpha,apalike
\bibliography{SE_bibs}

\end{document}